\documentclass[12pt,reqno]{amsproc}
\usepackage[english]{babel}
\usepackage{ucs}
\usepackage{amssymb,amsmath}
\usepackage{indentfirst}
\usepackage[left=3.2cm,right=3.2cm,top=3cm,bottom=3cm,bindingoffset=0cm]{geometry}
\usepackage{amsthm}
\usepackage{amsfonts}

\numberwithin{equation}{section}
\theoremstyle{plain}

\newtheorem{theorem}{Theorem}
\newtheorem{lemma}{Lemma}
\newtheorem{corollary}{Corollary}

\newtheorem{oldassertion}{Theorem}

\newtheorem{oldlemma}{Lemma}

\newtheorem{remark}{Remark}

\renewcommand{\Im}{\mathop{\mathrm{Im}}\nolimits}



\def\Im{\operatorname{Im}}

\usepackage{xcolor}
\usepackage{hyperref}

\hypersetup{pdfstartview=FitH, linkcolor=linkcolor,urlcolor=urlcolor, colorlinks=true, linkcolor=blue, citecolor=blue}
\begin{document}

\title{UNIFORM CONVERGENCE CRITERION FOR NON-HARMONIC SINE SERIES}
\author{Kristina Oganesyan}
\address{K. Oganesyan, Lomonosov Moscow State University, Moscow Center for Fundamental and Applied Mathematics, Universitat Aut\`onoma de Barcelona, Centre de Recerca Matem\`atica}
\email{oganchris@gmail.com}

\date{}

\maketitle

\markright{Uniform convergence criterion for non-harmonic sine series}

\begin{abstract}
We show that for a nonnegative monotone sequence $\{c_k\}$ the condition $c_kk\to 0$ is sufficient for uniform convergence of the series $\sum_{k=1}^{\infty}c_k\sin k^{\alpha} x$ on any bounded set for $\alpha\in (0,2)$, and for an odd natural $\alpha$ it is sufficient for uniform convergence on the whole $\mathbb{R}$. Moreover, the latter assertion still holds if we replace $k^{\alpha}$ by any polynomial in odd powers with rational coefficients. On the other hand, in the case of an even $\alpha$ it is necessary that $\sum_{k=1}^{\infty}c_k<\infty$ for convergence of the mentioned series at the point $\pi/2$ or at the point $2\pi/3$. Consequently, we obtain uniform convergence criteria. Besides, the results for a natural $\alpha$  remain true for sequences from more general RBVS class. 

\end{abstract}

{\bf Key words.} Uniform convergence, sine series, monotone coefficients, fractional parts of the values of a polynomial, Weyl sums.

\section{Introduction}
We consider the series
\begin{align}\label{series}
\sum_{k=1}^{\infty}c_k\sin k^{\alpha} x, \quad c_k \searrow 0,
\end{align}
for $\alpha>0$, and, for odd natural $\alpha$, the more general ones
\begin{align}\label{series2}
\sum_{k=1}^{\infty}c_k\sin f(k) x, \quad c_k\searrow 0,
\end{align}
where $f(k)$ stands for a polynomial of degree $\alpha$ with rational coefficients in odd powers of $k$. In the case of a natural $\alpha$ we also consider sequences $\{c_k\}$ from a more general class. We are interested in conditions which would be necessary and sufficient for uniform convergence of series \eqref{series} and \eqref{series2}.

For the case $\alpha=1$ such conditions are well-known (see \cite{CJ}).
\begin{oldassertion}\label{A} {\it (Chaundy, Jolliffe, 1916) If a nonnegative sequence $\{c_k\}_{k=1}^{\infty}$ is nonincreasing, then the series $\sum_{k=1}^{\infty}c_k\sin kx$ converges uniformly on $\mathbb{R}$ if and only if $c_kk\to 0$ as $k\to\infty$.}
\end{oldassertion}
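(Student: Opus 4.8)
The plan is to establish both directions of Theorem~\ref{A}. Since $x\mapsto\sin kx$ is odd, $2\pi$-periodic, and vanishes at $x=0$, it suffices to control the tails $S_{m,n}(x):=\sum_{k=m+1}^{n}c_k\sin kx$ uniformly for $x\in(0,\pi]$ as $m,n\to\infty$, i.e.\ to verify the Cauchy criterion for uniform convergence there. For necessity, assume the series converges uniformly, so $\sup_{x\in(0,\pi]}|S_{m,2m}(x)|\to0$. Evaluating at $x_m:=\pi/(4m)$ and noting that $m<k\le 2m$ forces $kx_m\in(\pi/4,\pi/2]$, hence $\sin kx_m\ge 1/\sqrt2$, monotonicity gives $S_{m,2m}(x_m)\ge\tfrac1{\sqrt2}\sum_{k=m+1}^{2m}c_k\ge\tfrac{m}{\sqrt2}\,c_{2m}$. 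Thus $mc_{2m}\to0$, and since $\{c_k\}$ is nonincreasing this is equivalent to $nc_n\to0$.

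For sufficiency, assume $c_k\searrow0$ and $kc_k\to0$, fix $\delta>0$ and $m_0$ with $kc_k<\delta$ for $k>m_0$, and let $m\ge m_0$, $n>m$. Two estimates are used: the trivial $|\sin t|\le|t|$, and the Dirichlet-kernel bound $\bigl|\sum_{j=1}^{k}\sin jx\bigr|\le 1/\sin(x/2)\le\pi/x$ on $(0,\pi]$, which via Abel summation yields $\bigl|\sum_{k=a+1}^{b}c_k\sin kx\bigr|\le 2\pi c_{a+1}/x$ whenever $b>a$. If $x>1$, the Abel bound with $a=m$ already gives $|S_{m,n}(x)|\le 2\pi c_{m+1}/x<2\pi c_{m+1}$, which is small. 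If $x\le1$, put $N:=\lfloor1/x\rfloor\ge1$, so $1/(N+1)<x\le1/N$, and distinguish the position of $N$ relative to $[m+1,n]$: when $n\le N$ every term has $kx\le1$, so $|S_{m,n}(x)|\le x\sum_{k=m+1}^{n}kc_k\le\delta x(n-m)\le\delta xN\le\delta$; when $m\ge N$ the Abel bound gives $|S_{m,n}(x)|\le 2\pi c_{m+1}/x<2\pi\delta$, since $m+1>1/x$ yields $c_{m+1}<\delta x$; and when $m<N<n$ we split $S_{m,n}(x)=\sum_{k=m+1}^{N}c_k\sin kx+\sum_{k=N+1}^{n}c_k\sin kx$, bounding the first sum by the small-angle estimate ($\le\delta$ as before) and the second by Abel summation from $N$ ($\le 2\pi c_{N+1}/x<2\pi\delta$, using $N+1>1/x$). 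In every case $|S_{m,n}(x)|<(1+2\pi)\delta$ uniformly in $x\in(0,\pi]$ and in $n>m$ once $m\ge m_0$; choosing $\delta=\varepsilon/(1+2\pi)$ gives the uniform Cauchy condition, hence uniform convergence on $(0,\pi]$, and therefore on $\mathbb{R}$.

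The genuinely delicate step is the sufficiency, and within it the need to splice two estimates living at the common scale $k\approx 1/x$: the bound $|\sin kx|\le kx$, useful only for $k$ up to about $1/x$, must be joined to the Abel--Dirichlet bound, useful only from about $1/x$ on, and one must check that the splitting point can always be placed inside the summation window $[m+1,n]$ so that the final bound is genuinely uniform in $x$. The hypothesis $kc_k\to0$ is precisely what forces both pieces to be small at that scale. Necessity is comparatively soft, resting only on the choice of test point $x_m=\pi/(4m)$, which keeps an entire block of roughly $m$ consecutive sines bounded away from zero.
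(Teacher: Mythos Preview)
Your proof is correct and follows the classical Chaundy--Jolliffe argument. Note, however, that the paper does not actually prove Theorem~\ref{A}: it is stated as an \texttt{oldassertion} (a result quoted from the literature, here \cite{CJ}) and serves only as background for the paper's own results on the series $\sum c_k\sin k^\alpha x$ with $\alpha\neq 1$.

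That said, your necessity argument is essentially the same idea the paper uses later in the proof of Theorem~\ref{thm2} (see the derivation around \eqref{021}--\eqref{022}): evaluate the tail at a point $x_0$ of order $l^{-\alpha}$ so that a block of roughly $l$ consecutive sines is bounded below, forcing $lc_l\to 0$. The paper uses the linear lower bound $\sin t\ge\frac{2}{\pi}t$ on $[0,\pi/2]$ rather than your constant lower bound $\sin t\ge 1/\sqrt2$ on $[\pi/4,\pi/2]$, but the mechanism is identical. For sufficiency, your splitting at the scale $k\approx 1/x$ and combination of the trivial estimate $|\sin t|\le t$ with the Abel--Dirichlet bound is exactly the standard proof; the paper's much more elaborate machinery in Sections~2--5 is needed only because for $\alpha\neq 1$ the partial sums $\sum_{k\le m}\sin k^\alpha x$ no longer admit a uniform Dirichlet-kernel bound.
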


In \cite{N}, the requirement of monotonicity is released to the requirement of quasimonotonicity, that is, of existence of a nonnegative number $\gamma$ such that $c_k k^{-\gamma}$ decrease, and the same criterion was extended to some more general sequences in \cite{S}. One more generalization of Theorem \ref{A} we can find in \cite{L}, where the corresponding criterion was proved for the sequences from RBVS class, i.e., satisfying the following conditions
\begin{align}\label{rbvs}
\sum_{k=l}^{\infty}|c_k-c_{k+1}|\leq Vc_l,\qquad c_kk\to 0\;\text{ as}\;k\to\infty,
\end{align} 
for any $l$, where $V$ depends only on $\{c_k\}$.

For a more general class of sequences, containing all the classes mentioned above, the result was obtained in \cite{T}.
\begin{oldassertion} {\it (Tikhonov, 2007) If a nonnegative sequence $\{c_k\}_{k=1}^{\infty}$ belongs to GM class, i.e., if there exists a constant $A$ depending only on $\{c_k\}$ such that}
$$\sum_{k=l}^{2l-1}|c_k-c_{k+1}|\leq A c_l$$
{\it for all $l$, then the series $\sum_{k=1}^{\infty}c_k\sin kx$ converges uniformly on $\mathbb{R}$ if and only if $c_kk\to 0$ as $k\to\infty$.}
\end{oldassertion}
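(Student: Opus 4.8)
I will prove both implications through the uniform Cauchy criterion, i.e., by controlling the block sums $\Sigma_{m,n}(x):=\sum_{k=m+1}^{n}c_k\sin kx$ uniformly in $x$ and showing $\sup_x|\Sigma_{m,n}(x)|\to0$ as $n\ge m\to\infty$. The single structural fact I will extract from the GM hypothesis, in place of monotonicity, is the elementary ``reverse'' comparison: $c_k\le(1+A)c_l$ whenever $k/2<l\le k$, because then $k$ lies in the block $[l,2l-1]$ and hence $c_k\le c_l+\sum_{i=l}^{k-1}|c_i-c_{i+1}|\le c_l+\sum_{i=l}^{2l-1}|c_i-c_{i+1}|\le(1+A)c_l$. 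Combined with $kc_k\to0$ it will also be used in the dyadic form $\sum_{k\ge m}|c_k-c_{k+1}|\le A\sum_{j\ge0}c_{2^jm}$.

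For necessity, suppose $\sum_k c_k\sin kx$ converges uniformly; then $\sup_x|\Sigma_{m,2m}(x)|\to0$. Evaluating at $x=\pi/(3m)$, where $kx\in(\pi/3,2\pi/3]$ and therefore $\sin kx\ge\sqrt3/2$ for every $k\in(m,2m]$, I get $\sum_{k=m+1}^{2m}c_k\to0$. Now fix $k$ and apply the reverse comparison for every $l\in(k/2,k]$: taking the minimum over these $\asymp k/2$ integers and bounding the minimum by the average, $kc_k\le C(A)\sum_{\lceil k/2\rceil<l\le k}c_l$, and the right-hand side is at most $\sum_{l=m+1}^{2m}c_l$ with $m:=\lceil k/2\rceil$, which tends to $0$. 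Hence $c_kk\to0$.

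For sufficiency, assume $c_kk\to0$, fix $\varepsilon>0$, and choose $N$ with $kc_k<\varepsilon$ for $k\ge N$; summing the GM bound over dyadic blocks gives, for $m\ge N$,
\[
\sum_{k=m}^{\infty}|c_k-c_{k+1}|\le A\sum_{j\ge0}c_{2^jm}<\frac{A\varepsilon}{m}\sum_{j\ge0}2^{-j}=\frac{2A\varepsilon}{m}.
\]
Using periodicity and oddness I may take $0<x\le\pi$, and I estimate $\Sigma_{n,n'}(x)$ for $n'\ge n\ge N$. If $x\ge 1/n$, Abel summation against $D_{a,b}(x)=\sum_{j=a}^{b}\sin jx$, for which $|D_{a,b}(x)|\le 1/\sin(x/2)\le\pi/x$, gives $|\Sigma_{n,n'}(x)|\le\frac{\pi}{x}\big(\sum_{k>n}|c_k-c_{k+1}|+c_{n'}\big)\le\frac{\pi(2A+1)\varepsilon}{xn}\le\pi(2A+1)\varepsilon$. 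If $0<x<1/n$, split at $M=\lfloor 1/x\rfloor\ge n$: the low-frequency part is bounded by $x\sum_{n<k\le M}kc_k\le Mx\,\varepsilon\le\varepsilon$ via $|\sin kx|\le kx$ and $kc_k<\varepsilon$, while the high-frequency part begins past $1/x$ and is covered by the previous case (the kernel now contributing a factor $\le1$). Collecting the cases, $\sup_x|\Sigma_{n,n'}(x)|\le C(A)\varepsilon$ for all large $n$, which is the uniform Cauchy condition; specializing to monotone or to RBVS sequences recovers Theorem~\ref{A} and the criterion of \cite{L}.

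The hard part is not any single estimate — the argument is short — but making sure the GM hypothesis is invoked exactly where monotonicity would otherwise be: the reverse comparison $c_k\le(1+A)c_l$ in the necessity half, and the dyadic control $\sum_{k\ge m}|c_k-c_{k+1}|\lesssim\sum_j c_{2^jm}$ (together with $c_kk\to0$) in the sufficiency half. A minor point to watch is the boundary term in the Abel summation on the truncated tail, which is harmless because $c_k\to0$, and the bookkeeping in the case $0<x<1/n$, where one must check that the split point $\lfloor1/x\rfloor$ lies beyond $n$ so that the two sub-sums really do fall under the two cases above.
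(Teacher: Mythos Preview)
The paper does not prove this statement: it is Theorem~B, an \texttt{oldassertion} cited from Tikhonov~(2007) as background for the paper's own results, so there is no in-paper proof to compare against.

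Your argument is correct and is essentially the standard one. The necessity half is clean: the reverse comparison $c_k\le(1+A)c_l$ for $l\in(k/2,k]$ is exactly the right substitute for monotonicity, and averaging it over the dyadic block converts the uniform Cauchy estimate at $x=\pi/(3m)$ into $kc_k\to0$. The sufficiency half is likewise standard: the dyadic summation of the GM inequality gives $\sum_{k\ge m}|c_k-c_{k+1}|\le 2A\varepsilon/m$ once $kc_k<\varepsilon$ for $k\ge m$, and the split at $M=\lfloor1/x\rfloor$ together with Abel summation against the Dirichlet-type kernel finishes the job. One cosmetic point: in the high-frequency tail you are really using $x(M+1)>1$ rather than $xM\ge1$ (equality $x=1/M$ is possible), but that is exactly what $M=\lfloor1/x\rfloor$ guarantees, so your parenthetical ``the kernel now contributing a factor $\le1$'' is justified once the starting index is taken to be $M+1$. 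The boundary term $c_{n'}$ in the Abel summation is indeed harmless since $c_kk\to0$ forces $c_k\to0$.
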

Moreover, there are uniform convergence criteria for the series $\sum_{k=1}^{\infty}c_k\sin kx$ with coefficients satisfying various conditions of general monotonicity (see \cite{T2} and \cite{DMT}).

The cases $\alpha=\frac{1}{2}$ and $\alpha=2$ for series \eqref{series} were considered in \cite{K}, where it was shown that the condition $c_kk\to 0$ is necessary and sufficient for uniform convergence of series \eqref{series} on the interval $[0, \pi]$ for $\alpha=\frac{1}{2}$, and for $\alpha=2$ a necessary and sufficient condition is $\sum_{k=1}^{\infty}c_k<\infty$.

We obtain the following

\begin{theorem}\label{thm} Let a nonnegative sequence $\{c_k\}_{k=1}^{\infty}$ be nonincreasing. Then 

(a)\quad if $\alpha$ is an even natural number, then series \eqref{series} converges at the point $\frac{\pi}{2}$ or at the point $\frac{2\pi}{3}$ only if $\;\sum_{k=1}^{\infty}c_k<\infty$;

(b)\quad if $\alpha$ is an odd natural number, then for uniform convergence of series \eqref{series2} on $\mathbb{R}$ it is sufficient that $c_kk\to 0$ as $k\to\infty$;

(c)\quad if $\alpha\in (0,2)$, then for uniform convergence of series \eqref{series} on any bounded subset of $\mathbb{R}$ it is sufficient that $c_kk\to 0$ as $k\to\infty$.
\end{theorem}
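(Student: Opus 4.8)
I would prove the three parts separately, since they require quite different tools. For part (a) I would just evaluate $\sin(k^\alpha x)$ at the two prescribed points. At $x=\pi/2$, since $\alpha$ is even, $k^\alpha=(k^2)^{\alpha/2}\equiv 1\pmod 8$ for odd $k$ while $2^\alpha\mid k^\alpha$ for even $k$; hence $\sin(k^\alpha\pi/2)=1$ when $k$ is odd and $0$ when $k$ is even, so the partial sums of $\sum c_k\sin(k^\alpha\pi/2)$ equal $\sum_{k\le n,\,2\nmid k}c_k$. This is a nondecreasing sequence, so convergence forces $\sum_{2\nmid k}c_k<\infty$, which by monotonicity is equivalent to $\sum_k c_k<\infty$. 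The point $2\pi/3$ is handled in the same way modulo $3$: now $2^\alpha\equiv 1\pmod 3$ because $\alpha$ is even, so $\sin(k^\alpha\cdot 2\pi/3)=\sqrt3/2$ whenever $3\nmid k$ and $0$ otherwise, and again the partial sums are nondecreasing.

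For part (c) I would run a dyadic Abel summation. With $I_j=[2^j,2^{j+1})$, write the tail of the series from index $N$ on as $\sum_{j\ge j_0}\sum_{k\in I_j}c_k\sin(k^\alpha x)$; since $c_k\le c_{2^j}$ on $I_j$, Abel summation bounds the $j$-th block by $2c_{2^j}B_j(x)$, where $B_j(x):=\sup_{I\subseteq I_j}\bigl|\sum_{k\in I}\sin(k^\alpha x)\bigr|$. For $B_j(x)$ I would combine the trivial bound $B_j(x)\ll|x|\,2^{j(\alpha+1)}$ from $|\sin t|\le|t|$, useful as long as $2^{j\alpha}|x|$ is bounded, with van der Corput's second-derivative test, which applies because for $\alpha\in(0,2)$ the phase $t\mapsto t^\alpha x/(2\pi)$ has a monotone, nonvanishing second derivative of size comparable to $|x|2^{j(\alpha-2)}$ on $I_j$, and yields $B_j(x)\ll|x|^{1/2}2^{j\alpha/2}+|x|^{-1/2}2^{j(2-\alpha)/2}$ (for $\alpha=1$ one uses the Dirichlet-kernel bound instead). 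Inserting these, splitting the $j$-sum at $2^j\sim|x|^{-1/\alpha}$, and writing $\epsilon_k:=c_k k\to 0$, each piece becomes a geometric-type series dominated by its peak-scale term, which is $\le\sup_{j\ge j_0}\epsilon_{2^j}$ times a constant depending only on $\sup|x|$; letting $j_0\to\infty$ gives uniform convergence on any bounded set.

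Part (b) is the substantial one. I would first clear denominators: writing $f=g/Q$ with $g\in\mathbb Z[k]$ a polynomial in odd powers of degree $\alpha$, the substitution $x\mapsto x/Q$ turns the series into $\sum c_k\sin(g(k)x)$, which is $2\pi$-periodic in $x$ since $g(k)\in\mathbb Z$; so it suffices to prove uniform convergence on $[0,\pi]$. Again I would pass, via dyadic Abel summation, to a uniform estimate of $B_j(x)=\sup_{I\subseteq I_j}\bigl|\sum_{k\in I}\sin(g(k)x)\bigr|$, obtained by a Farey/circle-method dissection of $[0,\pi]$: at scale $2^j$, approximate $x/(2\pi)$ — and, for the Weyl step, a suitable fixed integer multiple of it — by a fraction $p/q$ with $q\le 2^j$ and $|x/(2\pi)-p/q|\le(q2^j)^{-1}$. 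On the minor arcs, where $q\in[2^{j\theta},2^j]$, Weyl's inequality gives $B_j(x)\ll 2^{j(1-\delta)}$ with $\delta=\delta(\alpha)>0$. On the major arcs, where $q<2^{j\theta}$, I would use the periodicity $g(k+q)\equiv g(k)\pmod q$ to split $\sum_{k\in I}\sin(g(k)x)$ according to residues of $k$ modulo $q$; here the odd-power hypothesis is decisive, because the pairing $r\leftrightarrow q-r$ together with $g(-r)=-g(r)$ shows that the complete exponential sum $\sum_{r=0}^{q-1}e^{2\pi i p g(r)/q}$ is real, so complete periods contribute nothing to the sine sum, leaving two incomplete Weyl sums of length $<q$ plus the effect of the perturbation $s=x-2\pi p/q$ — the latter controlled by an extra summation by parts in $s$ or a further, finitely terminating, iteration of the dissection. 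Finally I would sum $\sum_{j\ge j_0}c_{2^j}B_j(x)$: the minor-arc terms are $\ll c_{2^j}2^{j(1-\delta)}=\epsilon_{2^j}2^{-j\delta}$ and sum to $o(1)$, while the major-arc terms, regrouped by the denominator $q$, form convergent geometric series around each peak scale $2^j\sim q$, each bounded by a constant times $\sup_{j\ge j_0}\epsilon_{2^j}$; hence the whole tail tends to $0$ uniformly in $x$ as $j_0\to\infty$.

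The main obstacle is exactly this last uniformity in $x$ in part (b): Weyl-type cancellation degrades near every rational, so one cannot simply bound $B_j(x)$ by $2^{j(1-\delta)}$, and the real content is to show that the major-arc losses are compensated simultaneously for all $x$, with no dependence on the denominator $q$, once the scale-$j$ bounds are summed — which is precisely what the circle-method dissection together with the ``complete sum is real'' identity provides, and where the restriction to odd powers (which by part (a) cannot be dropped) is genuinely used. Secondary nuisances are the bookkeeping of the nested major-arc dissection and the check that it terminates in a number of steps bounded in terms of $\alpha$, the transfer of the Dirichlet approximation from $x/(2\pi)$ to the leading-coefficient multiple that Weyl's inequality needs, and keeping all blocks short enough that $c_k$ is essentially constant on each.
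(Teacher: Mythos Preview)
Your treatment of (a) is the same as the paper's.

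For (c), your van der Corput argument is correct and is considerably shorter than the paper's route: the paper avoids van der Corput entirely and instead, for $\alpha\in(1,2)$, compares blocks of $\sin k^\alpha x$ to arithmetic-progression sums after tracking $\tilde\Delta_k^1=(k^\alpha-(k-1)^\alpha)x\bmod 2\pi$, and for $\alpha\in(0,1)$ uses a bare-hands pairing argument based on the concavity of $t\mapsto t^\alpha$. Your approach trades these two long case analyses for one classical exponential-sum estimate, at the cost of a mild dependence on $\sup|x|$, which is exactly what the statement allows.

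For (b) you have located the same decisive identity as the paper --- oddness of $g$ forces $\sum_{r\bmod q}\sin(2\pi p\,g(r)/q)=0$ --- but your major-arc bookkeeping has a genuine gap. Neither proposed control of the perturbation $s=x-2\pi p/q$ works as stated. ``Summation by parts in $s$'' fails: after writing $\sin(g(k)x)=\sin(2\pi pg(k)/q)\cos(g(k)s)+\cos(2\pi pg(k)/q)\sin(g(k)s)$, the second piece involves partial sums of $\cos(2\pi pg(k)/q)$, and the complete sum $\sum_{r\bmod q}e^{2\pi i pg(r)/q}$ is real but generally \emph{nonzero}, so these partial sums grow linearly; in the first piece the total variation of $\cos(g(k)s)$ over $I_j$ is of order $2^{j(\alpha-1)}/q$, which after multiplying by the $O(q)$ partial-sum bound and by $c_{2^j}$ gives $\epsilon_{2^j}2^{j(\alpha-2)}$, divergent for $\alpha\ge3$. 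Your ``finitely terminating nested dissection'' is not specified enough to check: passing to the polynomial $l\mapsto g(ql+r)s$ lowers the size of the leading coefficient but not the degree, and you give no stopping rule. Most seriously, even if each fixed denominator $q$ contributed $C\sup_{j\ge j_0}\epsilon_{2^j}$ as you claim, you still must sum over all $q$ that arise as $j$ varies, and you offer no reason why that sum is finite uniformly in $x$.

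The paper closes precisely this gap. It does not use dyadic blocks; it Abel-sums to $\sum_m(c_m-c_{m+1})|S_m(x)|$ with $S_m(x)=\operatorname{Im}\sum_{k\le m}e^{if(k)x}$ and classifies each $m$ as \emph{convenient}, \emph{almost convenient}, or \emph{inconvenient} according to whether $n!x/2\pi$ admits an approximation $C/M$ with $M\le m^\varepsilon$ and error $\le m^{\varepsilon-1}$ (resp.\ $\le m^{\varepsilon-n}$). For the first two types it proves and applies tailored Weyl estimates (its Corollaries~1 and~2). For inconvenient $m$ --- your major arc --- it splits the range $[M^{1/\varepsilon},\beta^{-1/(n-\varepsilon)}]$ at $K\sim(\beta\ln 2M)^{-1/n}$ and $2K\ln 2M$: for $m<K$ the direct bound $|S_m(x)|\le QMn!+C_f m^{n+1}\beta$ (your periodicity observation plus the crude Lipschitz estimate) suffices, while for $m\ge K$ it uses further refined Weyl bounds depending on $\beta$ and $M$ (its Corollary~3). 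The outcome is that each fixed $M$ contributes $\ll(\ln 2M)^{-1}\sup_{k\ge l}c_kk$, and the admissible denominators satisfy $2M_{i+1}\ge M_i^4$, so $\sum_i(\ln 2M_i)^{-1}<\infty$. This summability over \emph{all} denominators, uniform in $x$, is the step your outline is missing.
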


\begin{remark} In particular, it follows from Theorem \ref{thm} that for an odd $\alpha$ the sum of the series $$\sum_{k=1}^{\infty}\frac{a_k\sin k^{\alpha}x}{k},\quad\frac{a_k}{k}\searrow 0,\; a_k\to 0,$$ represents a continuous function, as long as the function $\sum_{k=1}^{\infty}\frac{\sin k^{\alpha}x}{k}$, despite being bounded, is discontinuous on a set which is dense in $\mathbb{R}$. More precisely, it has discontinuities at all points of the form $2\pi a/b,\;a,b\in\mathbb{Z},$ such that $\sum_{k=1}^{b}e^{ \frac{2\pi k^{\alpha}a}{b}}\neq 0$ (see \cite[Sec. 3]{Os}). Meanwhile, it is known that for any natural $a,\;n>2$ and any prime $p>n$ such that $(a,p)=1$ there holds 
$$\sum_{k=1}^{p^n}e^{\frac{2\pi a k^n}{p^n}}=p^{n-1}$$
(see \cite[(72)]{korobov}), and the set of $\pi$-rational points of the form $2\pi a/p^n,\;(a,p)=1,$ for a fixed $n$, is dense in $\mathbb{R}$.
\end{remark}

Theorem \ref{thm} represents the essential part of the uniform convergence criterion of series \eqref{series}, which is to be formulated later (in Theorem \ref{thm2}).

\begin{remark} 
For the part (a), one can find some other points with the same property, but it does not seem essential. 
\end{remark}

\begin{remark} If instead of sine series \eqref{series} we consider the corresponding cosine series, we can easily notice that the condition $\sum_{k=1}^{\infty}c_k<\infty$ is necessary for its convergence at the point $0$, and for a natural $\alpha$ --- for convergence at the points of the form $2\pi m, \;m\in\mathbb{Z}$.
\end{remark}

In the proof of Theorem \ref{thm} for the case \eqref{series2} we deal with distributions of the fractional parts of the values of a polynomial (see, for example, \cite{V},\cite{P}) and with estimates of Weyl sums (see also \cite{CHD},\cite{HB} and \cite{W}), which play an important role in number theory, in particular, in solving Waring's problem of representation of a natural number as a sum of equal powers of natural numbers and in estimating sums appearing in the Riemann zeta-function theory. The following well-known theorems provide bounds of Weyl sums at points of a special kind.

\begin{oldassertion}{\it (Weyl, 1916, \cite[Th. 14]{korobov}) Let $n\geq 2,\;h(x)=\alpha_1x+...+\alpha_nx^n$ and}
$$\alpha_n=\frac{a}{q}+\frac{\theta}{q^2},\quad (a,q)=1,\quad |\theta|\leq 1.$$
{\it If $0<\varepsilon_1<1$ and $P^{\varepsilon_1}\leq q\leq P^{n-\varepsilon_1}$, then for any $0<\varepsilon<1$ there holds}
$$\bigg|\sum_{k=1}^Pe^{2\pi i h(k)}\bigg|\leq C(n,\varepsilon,\varepsilon_1)P^{1-\frac{\varepsilon_1-\varepsilon}{2^{n-1}}}.$$
\end{oldassertion}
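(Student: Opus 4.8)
The plan is to prove Theorem~C by the classical Weyl differencing method, which lowers the degree of the polynomial one step at a time at the cost of replacing $|S|$ by $|S|^{2^{n-1}}$, where $S:=\sum_{k=1}^{P}e^{2\pi i h(k)}$; once the polynomial has become linear the resulting inner sum is a geometric series, after which the Diophantine hypothesis on $\alpha_n$ is fed into a standard estimate for sums of $\min\bigl(V,\|\alpha t\|^{-1}\bigr)$, where $\|t\|$ denotes the distance from $t$ to the nearest integer. Concretely: squaring $S$ and setting $k=x+m_1$ gives $|S|^{2}=\sum_{|m_1|<P}\sum_{x}e^{2\pi i(h(x+m_1)-h(x))}$, with the inner sum over an interval, and $h(x+m_1)-h(x)$ is a polynomial in $x$ of degree $n-1$ with leading coefficient $n\alpha_n m_1$. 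Iterating $n-1$ times, each step followed by the Cauchy--Schwarz inequality, one arrives at
\begin{equation*}
|S|^{2^{n-1}}\ \le\ (2P)^{2^{n-1}-n}\sum_{|m_1|<P}\cdots\sum_{|m_{n-1}|<P}\ \Bigl|\sum_{x\in I(\mathbf m)}e^{2\pi i(\beta(\mathbf m)x+\gamma(\mathbf m))}\Bigr| ,
\end{equation*}
where $\mathbf m=(m_1,\dots,m_{n-1})$, $I(\mathbf m)\subseteq[1,P]$, $\gamma(\mathbf m)$ does not depend on $x$, and $\beta(\mathbf m)=n!\,\alpha_n m_1\cdots m_{n-1}$ (the coefficients $\alpha_1,\dots,\alpha_{n-1}$ disappear after $n-1$ differencings, since each differencing lowers the degree). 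The exponent $2^{n-1}-n$ of $2P$ I would pin down by solving the one-step recursion that governs it; this is routine.

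Next, a geometric sum over an interval of length at most $P$ is $\ll\min\bigl(P,\|\beta\|^{-1}\bigr)$, so
\begin{equation*}
|S|^{2^{n-1}}\ \ll\ P^{2^{n-1}-n}\sum_{|m_1|<P}\cdots\sum_{|m_{n-1}|<P}\min\Bigl(P,\ \bigl\|n!\,\alpha_n m_1\cdots m_{n-1}\bigr\|^{-1}\Bigr).
\end{equation*}
The tuples with $m_1\cdots m_{n-1}=0$ number $\ll P^{n-2}$ and each contributes at most $P$, hence a harmless total $\ll P^{n-1}$. For the remaining tuples I would sum over the value $M=m_1\cdots m_{n-1}$, $1\le|M|\le P^{n-1}$: the number of $\mathbf m$ producing a prescribed $M$ is at most an iterated divisor function of $|M|$, so $\ll_\epsilon|M|^{\epsilon}\ll_\epsilon P^{\epsilon}$ for every $\epsilon>0$. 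Thus the essential part is $\ll_\epsilon P^{\epsilon}\sum_{1\le t\le n!\,P^{n-1}}\min\bigl(P,\|\alpha_n t\|^{-1}\bigr)$.

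Now comes the arithmetic input. Writing $\alpha_n=a/q+\theta/q^{2}$ with $(a,q)=1$, $|\theta|\le1$, I would invoke the standard bound $\sum_{t\le U}\min\bigl(V,\|\alpha t\|^{-1}\bigr)\ll(U/q+1)(V+q\log q)$, valid for all $U,V\ge1$ (proved by cutting $[1,U]$ into $\ll U/q+1$ blocks of length $q$, on each of which $\|\alpha t\|^{-1}$ is large only on a bounded number of short subintervals). With $U\asymp P^{n-1}$ and $V=P$ this gives $\ll P^{n}/q+P^{n-1}\log P+P+q\log P$, and the hypothesis $P^{\varepsilon_1}\le q\le P^{n-\varepsilon_1}$ enters here decisively: the lower bound on $q$ gives $P^{n}/q\le P^{n-\varepsilon_1}$, the upper bound gives $q\log P\ll_\epsilon P^{n-\varepsilon_1+\epsilon}$, and $P^{n-1}\log P\ll_\epsilon P^{n-\varepsilon_1+\epsilon}$ since $\varepsilon_1<1$. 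Combining this with the divisor count (costing a further $\ll_\epsilon P^{\epsilon}$) and with the negligible zero-product contribution $\ll P^{n-1}$, one obtains
\begin{equation*}
|S|^{2^{n-1}}\ \ll_\epsilon\ P^{2^{n-1}-n}\cdot P^{n-\varepsilon_1+\epsilon}\ =\ P^{\,2^{n-1}-\varepsilon_1+\epsilon}\qquad\text{for every }\epsilon>0 ,
\end{equation*}
whence $|S|\ll_\epsilon P^{\,1-(\varepsilon_1-\epsilon)/2^{n-1}}$; renaming $\epsilon$ as $\varepsilon$ gives exactly the claimed bound, the constant being of the form $C(n,\varepsilon,\varepsilon_1)$.

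I expect the genuine difficulty to lie in the third step: setting up and applying the $\min$-sum estimate with the right shapes of $U$, $V$ and $q$, and recognizing that the two constraints $q\ge P^{\varepsilon_1}$ and $q\le P^{n-\varepsilon_1}$ are precisely what force both the ``resonant'' term $P^{n}/q$ and the ``number-of-blocks'' term $q\log P$ to remain below $P^{n-\varepsilon_1+\epsilon}$. The differencing identity of the first step and the divisor bound of the second are standard, provided the bookkeeping of exponents is organized carefully.
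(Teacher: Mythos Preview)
The paper does not give its own proof of this statement: Theorem~C is quoted as a classical result of Weyl, with a reference to \cite[Th.~14]{korobov}, and is used only as background to motivate the finer estimates of Section~\ref{Weyl}. So there is no ``paper's proof'' to compare against.

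That said, your outline is correct and is precisely the textbook proof (as found, e.g., in Korobov). In fact the paper itself sets up exactly the same Weyl differencing machinery you describe --- see \eqref{Delta}, \eqref{reference}, \eqref{ineq_book} and \eqref{fin_S_m} --- citing Korobov for the iterated-difference identity and arriving at the same shape $|S|^{2^{n-1}}\ll P^{2^{n-1}-n}\sum_{y_1,\dots,y_{n-1}}\min\bigl(P,\|n!\alpha_n y_1\cdots y_{n-1}\|^{-1}\bigr)$. The paper then branches off to prove its own Lemmas~\ref{lm1}--\ref{lem3}, which control this sum under different hypotheses on the rational approximability of the leading coefficient; your use of the standard block estimate $\sum_{t\le U}\min(V,\|\alpha t\|^{-1})\ll (U/q+1)(V+q\log q)$ together with the divisor bound is exactly what is needed for Theorem~C and is the route taken in the cited reference. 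One small bookkeeping point: you pick up two separate $P^{\epsilon}$ losses (one from the divisor function, one from the logarithms), so strictly you get $|S|^{2^{n-1}}\ll_\epsilon P^{2^{n-1}-\varepsilon_1+2\epsilon}$; this is harmless since $\epsilon$ is arbitrary, but worth saying explicitly before renaming.
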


\begin{oldassertion}{\it (Vinogradov, 1952, \cite[Th. 17]{korobov}) Let $n>2,\;h(x)=\alpha_1x+...+\alpha_nx^n$ and }
$$\alpha_n=\frac{a}{q}+\frac{\theta}{q^2},\quad (a,q)=1,\quad |\theta|\leq 1.$$
{\it If $P\leq q\leq P^{n-1}$, then there holds}
$$\bigg|\sum_{k=1}^Pe^{2\pi i h(k)}\bigg|\leq e^{3n}P^{1-\frac{1}{9n^2\ln n}}.$$
\end{oldassertion}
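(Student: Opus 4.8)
The plan is to derive this from Vinogradov's mean value theorem, which is the standard route and the reason an $n^{2}\ln n$ appears in the exponent. Throughout write $e(t):=e^{2\pi i t}$ and $S:=\sum_{k=1}^{P}e(h(k))$. Everything will follow from the general minor‑arc bound
$$
|S|\ll_{n}P^{1+\varepsilon}\Bigl(\tfrac1q+\tfrac1P+\tfrac{q}{P^{n}}\Bigr)^{\rho(n)},\qquad \rho(n)\asymp\frac{1}{n^{2}\ln n},
$$
since the hypotheses $P\le q$ and $q\le P^{n-1}$ force each of $q^{-1}$, $P^{-1}$, $qP^{-n}$ to be at most $P^{-1}$, so the right‑hand side is $\ll_{n}P^{1-\rho(n)+\varepsilon}$; choosing $\varepsilon$ as a suitable small multiple of $1/(n^{2}\ln n)$ and tracking the dependence of the implied constant on $n$ and $\varepsilon$ through the argument below turns this into the explicit $e^{3n}P^{1-1/(9n^{2}\ln n)}$. (This is why one uses the mean value theorem in Vinogradov–Korobov's explicit, $\varepsilon$‑free form rather than with an unspecified $\varepsilon$.)

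For the reduction, introduce $J_{s,n}(P):=\#\{(\mathbf x,\mathbf y)\in[1,P]^{2s}:\ \sum_{i}x_{i}^{j}=\sum_{i}y_{i}^{j}\ (1\le j\le n)\}=\int_{[0,1)^{n}}|T(\boldsymbol\alpha)|^{2s}\,d\boldsymbol\alpha$, where $T(\boldsymbol\alpha):=\sum_{k\le P}e(\alpha_{1}k+\dots+\alpha_{n}k^{n})$. To pass from the average $J_{s,n}$ to the single value $S$, average over short shifts: for a parameter $1\le P_{1}\le P$, the double sum $\sum_{y=1}^{P_{1}}\sum_{k=1}^{P}e(h(k+y))$ equals $P_{1}S+O(P_{1}^{2})$ (reindex $k+y$), while $h(k+y)=\sum_{t=0}^{n}\gamma_{t}(y)k^{t}$ with $\gamma_{n}(y)\equiv\alpha_{n}$ and, for $0\le t<n$, $\gamma_{t}(y)=\sum_{r=t}^{n}\binom{r}{t}\alpha_{r}y^{r-t}$ a polynomial of degree $n-t$ in $y$. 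Hence $P_{1}|S|\le\sum_{y\le P_{1}}|g_{y}|+O(P_{1}^{2})$ with $g_{y}=\sum_{k\le P}e(h(k+y))$; raising to the power $2s$ (Hölder), expanding $|g_{y}|^{2s}$ over $2s$‑tuples $\mathbf k\in[1,P]^{2s}$ with alternating signs (the $k$‑independent term $\gamma_{0}(y)$ cancels), and grouping by the vector $\mathbf h=(h_{1},\dots,h_{n})$ of power‑sum differences $h_{t}=\sum_{i\le s}k_{i}^{t}-\sum_{i>s}k_{i}^{t}$ gives
$$
|S|^{2s}\ll P_{1}^{-1}\sum_{\mathbf h}N(\mathbf h)\,\Bigl|\sum_{y=1}^{P_{1}}e\bigl(Q_{\mathbf h}(y)\bigr)\Bigr|+P_{1}^{2s},\qquad Q_{\mathbf h}(y):=\sum_{t=1}^{n-1}\gamma_{t}(y)h_{t},
$$
where we used $\gamma_{n}\equiv\alpha_{n}$ to drop the $h_{n}$‑term, and $N(\mathbf h)\le N(\mathbf 0)=J_{s,n}(P)$ since $N(\mathbf h)=\int|T|^{2s}e(-\alpha_{1}h_{1}-\dots-\alpha_{n}h_{n})\le\int|T|^{2s}$. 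Now $Q_{\mathbf h}$ has degree $\le n-1$ with leading coefficient $n\alpha_{n}h_{1}$; splitting the $\mathbf h$‑sum according to whether this polynomial genuinely oscillates — estimated by Weyl's inequality (the classical differencing bound), and here the hypothesis that $q$, the denominator of the approximation to $\alpha_{n}$, lies in $[P,P^{n-1}]$ is exactly what guarantees that $n\alpha_{n}h_{1}$ is not close to a rational of small denominator for all but few $h_{1}$ — versus the exceptional $\mathbf h$ (whose $N$‑mass feeds into a lower‑degree mean value $J_{s,n-1}(P)$), and optimizing $P_{1}$ with $s$ taken at the mean‑value threshold $\asymp n^{2}\ln n$, yields the minor‑arc bound above.

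It remains to prove the mean value theorem: $J_{s,n}(P)\le C(s,n)\,P^{2s-\frac12 n(n+1)}$ for $s\ge s_{0}(n)$ with $s_{0}(n)\asymp n^{2}\ln n$ and an explicit $C(s,n)$. This is the technical heart — Vinogradov's inductive $p$‑adic argument: choose a prime $p\asymp P^{1/n}$; in a given solution sort the variables by their residues modulo $p,p^{2},\dots,p^{n}$; solutions whose relevant variables are \textbf{well‑spaced} modulo $p$ can, after a translation and dilation, be counted by a Vinogradov system of the same degree $n$ but length $\asymp P/p\asymp P^{1-1/n}$, while the \textbf{clustered} ones are comparatively rare (a Hensel‑lifting / divisibility count). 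This produces a recursion relating $J_{s+n,n}(P)$ to $p^{n(n+1)/2}J_{s,n}(P^{1-1/n})$ up to lower‑order terms, which, iterated $\asymp n\ln n$ times, drives the excess exponent $\eta$ in $J_{s,n}(P)\le P^{2s-\frac12 n(n+1)+\eta}$ down to $0$; the number of iterations is precisely what makes $s_{0}(n)$ of order $n^{2}\ln n$, hence what fixes the constant $9$. (Wooley's efficient congruencing or the Bourgain–Demeter–Guth decoupling theorem gives the far better threshold $s_{0}(n)=\tfrac12 n(n+1)$, but the classical argument already yields the stated bound.)

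The main obstacle is this last step — establishing the mean value theorem with a threshold of the correct order $n^{2}\ln n$ and with implied constants controlled well enough (of size $e^{O(n)}$) to survive being raised through the reduction. By comparison, the reduction in the second paragraph is routine once the mean value bound is in hand; its only delicate point is the separation of the $\mathbf h$‑sum and the verification that the exceptional set does not swamp the saving, which again rests on the assumption $P\le q\le P^{n-1}$ on the leading coefficient.
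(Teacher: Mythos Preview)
The paper does not prove this statement. Theorem~D is quoted from Korobov's book \cite[Th.~17]{korobov} as classical background, placed next to Weyl's inequality (Theorem~C) for context; the paper then explicitly says that its own Weyl-sum estimates in Section~\ref{Weyl} are of a different type, and indeed those estimates are obtained entirely by Weyl differencing (inequality~\eqref{reference} and its consequence~\eqref{fin_S_m}), never via Vinogradov's mean value method. So there is no ``paper's own proof'' to compare your attempt against.

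As for your outline itself: it is the correct standard architecture --- reduce the pointwise sum to $J_{s,n}(P)$ by the shift-and-H\"older device, then establish the mean value theorem by Vinogradov's $p$-adic iteration, with $s_0(n)\asymp n^2\ln n$ forcing the $9n^2\ln n$ in the exponent. But it is a plan, not a proof. The two places where real work is hidden are (i) the mean value theorem, which you only describe in words and which occupies a full chapter in Korobov, and (ii) the reduction step, where you assert but do not verify that the $\mathbf h$ with $n\alpha_n h_1$ close to a rational of small height are few enough, and where the explicit constant $e^{3n}$ (as opposed to some unspecified $C(n)$) has to be tracked through every inequality. If you intend this as more than a literature pointer, those two pieces need to be carried out; if you intend it only as a pointer, say so and cite \cite[Th.~17]{korobov}, as the paper does.
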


However, in these theorems the length of the sum $P$ is tied to the denominators of rational appoximations of the leading coefficient of the polynomial $h$.

We obtain Weyl sums estimates depending on how well the leading coefficient of the polynomial is approximated by rationals with denominators less than some small power of $P$. The best estimates are obtained at the points which are approximated in this way rather badly. Such estimates are of interest because if the leading coefficient is $``$close$"$ to a rational, then the Weyl sums behave in some sense similar to rational sums which are well studied and easier to deal with.

From the proof of Theorem \ref{thm} (b) it follows that when $c_kk\to 0$, the series $$\sum_{m=1}^{\infty}(c_m-c_{m+1})\Big|\Im\sum_{k=0}^{m} e^{if(k)x}\Big|$$ converges uniformly. This means, in particular, that if we consider the coefficients $c_m:=m^{-1}\ln^{-1}(m+1)$, the series $\sum_{m=1}^{\infty}\frac{1}{m^2\ln (m+1)}\left|\Im\sum_{k=0}^{m} e^{if(k)x}\right|$ converges uniformly, hence, for any $a>0$, the number of $m$ such that $|\Im\sum_{k=0}^{m} e^{if(k)x}|\geq am$ is uniformly small. 

It is worth mentioning that in \cite{O} an estimate of symmetric partial sums of the series $\sum_{k\in\mathbb{Z}\setminus \{0\}}\frac{e^{2\pi i h(k)}}{k}$ is given for a polynomial $h$ with real coefficients. This result is used to establish lower estimates for Lebesgue constants and prove the following theorem which is strongly related to the present work.

\begin{oldassertion}{\it (Oskolkov, 1986)
Let $r\geq 2,\;P_r(y)=\alpha_0+\alpha_1 y+...+\alpha_r y^r$ be a polynomial with integer coefficients assuming different integer values for $y\in\mathbb{N}\cup\{0\}$. Then $\{P_r(n)\}$ is not a spectrum of uniform convergence.}
\end{oldassertion}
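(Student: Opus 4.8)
The plan is to reduce the statement to the non-uniform-boundedness of partial-sum operators and then build explicit bad trigonometric polynomials out of Weyl-sum data. Write $\Lambda=\{P_r(n):n\ge 0\}$; by the injectivity hypothesis this is a genuine sequence of distinct integers, and, after multiplying the relevant functions by $e^{-i\alpha_0x}$, I may assume $\alpha_0=0$, so that $P_r$ has integer coefficients and $P_r(0)=0$. Recall that $\Lambda$ is a spectrum of uniform convergence precisely when the partial-sum operators $S_N\colon \sum_k\widehat f(\lambda_k)e^{i\lambda_k x}\mapsto\sum_{k\le N}\widehat f(\lambda_k)e^{i\lambda_k x}$ are uniformly bounded on $C_\Lambda=\{f\in C(\mathbb{T}):\widehat f\ \text{supported in}\ \Lambda\}$; by Banach--Steinhaus and the density of trigonometric polynomials in $C_\Lambda$, this is equivalent to the existence of a constant $C$ with $\|S_NT\|_\infty\le C\|T\|_\infty$ for every trigonometric polynomial $T$ with spectrum in $\Lambda$ and every $N$. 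So it suffices to exhibit, for $r\ge 2$, trigonometric polynomials $T_m$ with spectrum in $\Lambda$ and indices $N_m$ with $\|T_m\|_\infty\le 1$ but $\|S_{N_m}T_m\|_\infty\to\infty$.

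The obstruction to simply transplanting the classical Fej\'er polynomial $Q_n(x)=4\sin(2nx)\sum_{k=1}^{n}k^{-1}\sin kx$ (which is bounded by $4\pi$ while its partial sum at frequency $2n$ equals $2\sum_{k\le n}1/k$ at $x=0$) is one of density: $Q_n$ is supported on the $\asymp n$ consecutive frequencies $\{n,\dots,3n\}$, whereas between heights $H$ and $3H$ the set $\Lambda=\{P_r(m)\}$ with $r\ge 2$ contains only $\asymp H^{1/r}$ frequencies, with gaps tending to infinity — there is no progression on which $Q_n$ could live. The remedy is to localise at $\pi$-rational points: since $P_r$ has integer coefficients, $P_r(mq+j)\equiv P_r(j)\pmod q$, so at $x=2\pi a/q$ the exponential $e^{iP_r(k)x}$ depends only on $k\bmod q$, and near such a point the frequencies $P_r(k)$ behave like a system that is periodic modulo $q$. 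One can then play Fej\'er's game "modulo $q$": choose weights $w_k$ and phases $\varepsilon_k$, organised along the fibres of $k\mapsto P_r(k)\bmod q$ instead of along $\{1,\dots,n\}$, so that (i) the full polynomial $T_m=\sum_k(\varepsilon_k/w_k)e^{iP_r(k)x}$ is uniformly bounded — which reduces exactly to a uniform bound for the symmetric partial sums of $\sum_{k\ne 0}\frac{e^{2\pi i h(k)x}}{k}$ for the appropriate auxiliary polynomial $h$ — while (ii) a partial sum $S_{N_m}T_m$, evaluated near $x=2\pi a/q_m$, collapses onto the complete Weyl sum $W(a,q_m)=\sum_{j=1}^{q_m}e^{2\pi iaP_r(j)/q_m}$ with a logarithmic-in-the-number-of-scales reinforcement, so that $\|S_{N_m}T_m\|_\infty$ tends to infinity.

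For step (ii) one needs moduli $q_m\to\infty$ and residues $a$ for which $W(a,q_m)$ is large compared with generic incomplete Weyl sums: this is supplied by explicit evaluations such as Korobov's formula $\sum_{k=1}^{p^n}e^{2\pi i ak^n/p^n}=p^{n-1}$ quoted above, extended to the integer-valued polynomial $P_r$ of degree $r$, giving $|W(a,q_m)|\gg q_m^{1-1/r}$ along $q_m=p^r$; while the matching upper estimates of Weyl and Vinogradov, also quoted above, guarantee that away from these major-arc points the relevant sums are genuinely smaller, which is precisely what allows the cancellation in (i). Assembling $T_m$ from these ingredients and taking sup norms gives $\|S_{N_m}T_m\|_\infty/\|T_m\|_\infty\to\infty$, so $S_N$ is not uniformly bounded on $C_\Lambda$ and $\Lambda$ is not a spectrum of uniform convergence.

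The hard part is exactly the simultaneous control demanded in (i)--(ii): one must design a single polynomial that is uniformly bounded yet has a partial sum exhibiting a tall, narrow spike near a $\pi$-rational point, and in the polynomial setting the telescoping that achieves this in Fej\'er's construction has to be replaced by the arithmetic of $P_r(k)\bmod q$ together with sharp two-sided Weyl-sum estimates — in effect by the estimate for the symmetric partial sums of $\sum_{k\ne 0}e^{2\pi i h(k)x}/k$ established in \cite{O}. Everything else is the routine dictionary between uniform boundedness of partial-sum operators, Lebesgue constants for $C_\Lambda$, and the existence of the extremal polynomials $T_m$.
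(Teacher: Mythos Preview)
The paper does not prove this statement. Theorem~E is quoted from Oskolkov's 1986 paper \cite{O} as background; the only information the present paper gives about its proof is the one-sentence remark preceding it: the estimate in \cite{O} for the symmetric partial sums of $\sum_{k\ne 0}e^{2\pi i h(k)}/k$ ``is used to establish lower estimates for Lebesgue constants and prove the following theorem.'' There is therefore no in-paper proof to compare your proposal against.

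Your outline is consistent with that one-sentence description---the passage from partial-sum operators to Lebesgue constants, the use of large complete Weyl sums at $\pi$-rational points, and the appeal to the symmetric partial-sum estimate from \cite{O} are exactly the ingredients the paper names. But what you have written is a programme, not a proof: the construction of the polynomials $T_m$ satisfying (i) and (ii) simultaneously is asserted rather than carried out, and you yourself identify this as ``the hard part'' and defer it to \cite{O}. In particular, the claim that one can organise weights along the fibres of $k\mapsto P_r(k)\bmod q$ so that the full polynomial stays bounded while a partial sum blows up logarithmically is precisely the content of Oskolkov's argument, not something you have established. If you intend this as a self-contained proof, the actual construction of $T_m$ and the verification of the two-sided bounds must be supplied.
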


Here by a spectrum of uniform convergence we mean a sequence $\mathfrak{K}=\{k_n\}$ of pairwise different integers such that for any continuous function, having its coefficients equal zero for $k\notin\mathfrak{K}$, partial sums of its Fourier series converge uniformly.

In \cite{AO}, uniform boundness of the symmetric partial sums $\sum_{1\leq |k|\leq m}\frac{e^{2\pi i h(k)}}{k}$ in $m\in\mathbb{N}$ and $\deg h\leq r$, for a fixed $r$, was proved. In particular, this result leads to

\begin{oldassertion}\label{AOs}{\it (Arkhipov, Oskolkov, 1987)
Let $P^+(x),\;P^-(x)$ be polynomials with real coefficients and $P^+(-x)\equiv P^+(x),\; P^-(-x)\equiv-P^-(x).$ Then the series}
$$\sum_{n=1}^{\infty}\frac{e^{2\pi iP^+(n)}\sin 2\pi P^-(n)}{n}$$
{\it converges and the absolute values of its partial sums are bounded by a constant depending only on the powers of $P^+$ and $P^-$ but not on their coefficients.}
\end{oldassertion}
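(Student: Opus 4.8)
The plan is to remove the sine via Euler's formula and then exploit the parity of $P^{\pm}$. Write $\sin 2\pi P^-(n)=\frac{1}{2i}\bigl(e^{2\pi iP^-(n)}-e^{-2\pi iP^-(n)}\bigr)$ and set $h(x):=P^+(x)+P^-(x)$; since $P^+(-x)\equiv P^+(x)$ and $P^-(-x)\equiv -P^-(x)$, we get $h(-x)\equiv P^+(x)-P^-(x)$, whence $e^{2\pi iP^+(n)}\sin 2\pi P^-(n)=\frac{1}{2i}\bigl(e^{2\pi ih(n)}-e^{2\pi ih(-n)}\bigr)$. Substituting $k=-n$ in the second half,
$$\sum_{n=1}^{N}\frac{e^{2\pi iP^+(n)}\sin 2\pi P^-(n)}{n}=\frac{1}{2i}\sum_{1\le|k|\le N}\frac{e^{2\pi ih(k)}}{k}=:\frac{1}{2i}\,S_N(h),$$
with $\deg h\le r:=\max(\deg P^+,\deg P^-)$. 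So it suffices to show that $S_N(h)$ is bounded uniformly in $N$ and in the real coefficients of $h$ by a constant $C(r)$, and that $S_N(h)$ converges as $N\to\infty$ for each fixed $h$ --- which is precisely the content of \cite{AO}.

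\textbf{Step 2: uniform boundedness by induction on $r=\deg h$.} For $r\le 1$ one has $h(k)=a_1k+a_0$ and $S_N(h)=2i\,e^{2\pi ia_0}\sum_{k=1}^{N}\frac{\sin 2\pi a_1k}{k}$, the classical Dirichlet sum, which is absolutely bounded. For $r\ge 2$ I would run the circle method block by block: dyadically decompose $\{1\le|k|\le N\}$ into ranges $P\le|k|<2P$ ($P=2^j$) and, by Dirichlet's theorem, approximate the leading coefficient $a_r$ by some $a/q$, $(a,q)=1$, with $q$ at most a fixed power of $P$. On the \emph{minor arcs} (blocks for which $q$ exceeds a small fixed power $P^{\varepsilon_1}$) Weyl's inequality --- the estimates of Weyl and Vinogradov quoted above --- bounds $\bigl|\sum_{P\le k<2P}e^{2\pi ih(k)}\bigr|$, and every partial sum within the block, by $O(P^{1-\delta(r)})$; summation by parts against the weights $1/k$ turns this into a contribution $O(P^{-\delta(r)})$ from the block, geometrically summable in $j$. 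On the \emph{major arcs} ($q\le P^{\varepsilon_1}$) I would split the block into residue classes $k\equiv s\pmod q$ and, writing $k=qm+s$, expand $h(qm+s)$ as a polynomial in $m$: its top coefficients split into a ``rational part'' (from $a/q$), which after reduction mod $1$ produces an explicit resonant main term, and a ``small part'' (from $a_r-a/q$), which leaves a polynomial whose leading behaviour is negligible on the range $m\lesssim P/q$, so that either the inductive hypothesis (in degree $<r$) or a further iteration of the same dichotomy applies; the resonant main terms, summed over residue classes $s$ and over $k$ versus $-k$, are then shown to collapse into a bounded quantity.

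\textbf{Step 3: convergence, and the main obstacle.} Given Step 2, convergence of $S_N(h)$ for fixed $h$ follows from the Cauchy criterion applied to the tails $\sum_{M<|k|\le N}e^{2\pi ih(k)}/k$. When every coefficient of $h$ is rational, $e^{2\pi ih(k)}$ is periodic, say mod $Q$, and the identity $\sum_{k=1}^{Q}e^{2\pi ih(-k)}=\sum_{k=1}^{Q}e^{2\pi ih(k)}$ (both sides running over a complete residue system mod $Q$) forces the periodic ``mean'' to cancel in the symmetric sum, so the partial sums converge outright; in general, the same circle-method bounds from Step 2, now retaining the lower cut-off $M$, show these tails tend to $0$. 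I expect the only genuine difficulty to lie in the major-arc part of the induction: controlling, uniformly in the unbounded real coefficients of $h$, the errors incurred upon reducing modulo $q$ (a crude reduction multiplies the fractional part of the leading coefficient by $q^r$), and verifying that the accumulated resonant main terms really do cancel rather than build up a logarithmically divergent contribution. It is this uniform bookkeeping across scales, rather than any single estimate, that is the heart of the argument in \cite{AO}.
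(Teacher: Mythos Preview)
The paper does not prove Theorem~\ref{AOs}; it is quoted as a known result of Arkhipov and Oskolkov \cite{AO}. What the paper does say is that Theorem~\ref{AOs} follows from the main result of \cite{AO}, namely the uniform boundedness (in $m$ and in real polynomials $h$ of degree $\le r$) of the symmetric partial sums $\sum_{1\le |k|\le m} e^{2\pi i h(k)}/k$. Your Step~1 is exactly this deduction, carried out correctly: the parity identities for $P^{\pm}$ give $e^{2\pi iP^+(n)}\sin 2\pi P^-(n)=\tfrac{1}{2i}(e^{2\pi ih(n)}-e^{2\pi ih(-n)})$ with $h=P^++P^-$, and the sign change $k=-n$ folds the one-sided sum into the symmetric one. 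So as far as comparison with the paper goes, you have reproduced precisely what the paper does and nothing is missing.

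Your Steps~2--3 go beyond the paper: they sketch a proof of the Arkhipov--Oskolkov theorem itself. The architecture you describe --- dyadic decomposition, Dirichlet approximation of the leading coefficient, Weyl/Vinogradov on the minor arcs, and on the major arcs a reduction mod $q$ followed by an appeal to lower degree --- is indeed the shape of the argument in \cite{AO}. You are also right to flag the major-arc bookkeeping as the crux: after writing $k=qm+s$ the new polynomial in $m$ has leading coefficient $(a_r-a/q)q^r$, and one must show that the contribution of the ``rational'' piece over a full set of residues $s$, summed symmetrically over $k$ and $-k$, remains bounded rather than producing a $\log N$. This is handled in \cite{AO} by a careful analysis (related to complete rational exponential sums and van der Corput--type estimates for the resulting ``smooth'' piece), and your sketch correctly identifies but does not resolve it. As a proof it is incomplete at exactly the point you say it is; as a summary of the strategy it is accurate.
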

\vspace{4pt}

Using the Abel transformation from Theorem \ref{AOs} one can derive Theorem \ref{thm} (b) in the case $c_kk\searrow 0$ .

In order to formulate the uniform convergence criterion for series \eqref{series} we need to introduce the following definition.

For $\alpha>0$ and $\gamma>0$ we call by {\it a discrete $(\alpha,\gamma)$-neighbourhood of zero} such a sequence $\{x_j\}_{j=0}^{\infty}$ that $|x_j|=\frac{\pi}{{\gamma}^{\alpha+1}(N+j)^{\alpha}}$ for all $j\in\mathbb{Z}^+$ and some $N\in\mathbb{N}$.

Now we are ready to formulate the criterion.
\begin{theorem}\label{thm2} Let a nonnegative sequence $\{c_k\}_{k=1}^{\infty}$ be nonincreasing. Then 

(a)\quad if $\alpha$ is an even natural number, then series \eqref{series} converges uniformly on a set containing a point of the form $\frac{\pi}{2}+2\pi m$ or $\frac{2\pi}{3}+2\pi m,\;m\in \mathbb{Z},$ if and only if $\sum_{k=1}^{\infty}c_k<\infty$;

(b)\quad if $\alpha$ is an odd natural number, then series \eqref{series} converges uniformly on a set containing for some $\gamma\geq 2$ a discrete $(\alpha,\gamma)$-neighbourhood of zero if and only if $c_kk\to 0$ as $k\to\infty$;

(c)\quad if $\alpha\in (0,2)$, then series \eqref{series} converges uniformly on a bounded set containing for some $\gamma\geq 2$ a discrete $(\alpha,\gamma)$-neighbourhood of zero if and only if $c_kk\to 0$ as $k\to\infty$.
\end{theorem}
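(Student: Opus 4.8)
\emph{Proof plan.} The plan is to derive Theorem~\ref{thm2} from Theorem~\ref{thm}: the sufficiency halves of (a)--(c) are essentially restatements of facts already at hand, so only the converse (necessity) implications need to be supplied, and these are elementary. Throughout I will use that uniform convergence of \eqref{series} on a set $E$ is equivalent to the uniform Cauchy condition $\sup_{x\in E}\big|\sum_{k=l}^{m}c_k\sin k^\alpha x\big|\to 0$ as $l,m\to\infty$ with $m>l$.

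For (a), the ``if'' direction is immediate from the Weierstrass $M$-test, since $\sum_k c_k<\infty$ makes \eqref{series} converge absolutely and uniformly on $\mathbb{R}$. For ``only if'' I would use periodicity: when $\alpha$ is an even natural number, $k^\alpha\in\mathbb{N}$, so for every $m\in\mathbb{Z}$
\[
\sin k^\alpha\!\Big(\tfrac\pi2+2\pi m\Big)=\sin\tfrac{\pi k^\alpha}{2},\qquad \sin k^\alpha\!\Big(\tfrac{2\pi}{3}+2\pi m\Big)=\sin\tfrac{2\pi k^\alpha}{3}.
\]
Thus the value of \eqref{series} at $\tfrac\pi2+2\pi m$ (resp. $\tfrac{2\pi}{3}+2\pi m$) coincides with its value at $\tfrac\pi2$ (resp. $\tfrac{2\pi}{3}$); if \eqref{series} converges uniformly on a set containing such a point then in particular it converges at $\tfrac\pi2$ or at $\tfrac{2\pi}{3}$, and Theorem~\ref{thm}(a) forces $\sum_k c_k<\infty$.

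For (b) and (c), the ``if'' directions are Theorem~\ref{thm}(b) (with $f(k)=k^\alpha$) and Theorem~\ref{thm}(c) respectively, the latter being applicable precisely because the underlying set is bounded. The ``only if'' direction is the same for both parts --- it uses neither integrality of $\alpha$ nor boundedness of the set --- and I would argue as follows. Assume \eqref{series} converges uniformly on a set containing a discrete $(\alpha,\gamma)$-neighbourhood of zero $\{x_j\}$, $|x_j|=\pi/(\gamma^{\alpha+1}(N+j)^\alpha)$, with $\gamma\ge 2$. For all sufficiently large $n$ put $j=n-N$, so $|x_j|=\pi/(\gamma^{\alpha+1}n^\alpha)$; then for $n\le k\le 2n$ one has $0<k^\alpha|x_j|\le 2^\alpha\pi/\gamma^{\alpha+1}\le\pi/2$ --- this is exactly where $\gamma\ge 2$ enters --- so all terms $c_k\sin k^\alpha x_j$ with $n\le k\le2n$ have one and the same sign and $\sin(k^\alpha|x_j|)\ge\frac2\pi k^\alpha|x_j|$. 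Summing and using that $\{c_k\}$ is nonincreasing,
\[
\Big|\sum_{k=n}^{2n}c_k\sin k^\alpha x_j\Big|=\sum_{k=n}^{2n}c_k\sin(k^\alpha|x_j|)\ \ge\ \frac{2|x_j|}{\pi}\sum_{k=n}^{2n}c_k k^\alpha\ \ge\ \frac{2|x_j|}{\pi}\,c_{2n}(n+1)n^\alpha\ \ge\ \frac{2}{\gamma^{\alpha+1}}\,c_{2n}\,n .
\]
By the uniform Cauchy condition the left-hand side tends to $0$ as $n\to\infty$, hence $c_{2n}n\to0$, and monotonicity of $\{c_k\}$ then yields $c_kk\to0$.

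I do not expect a genuine obstacle once Theorem~\ref{thm} is in hand; the only points needing care are the periodicity reduction in (a) (which breaks for non-integer or even $\alpha$ exactly as the statement of Theorem~\ref{thm} reflects) and, in (b)/(c), checking that the choice $j=n-N$ together with $\gamma\ge2$ keeps $k^\alpha|x_j|$ inside $[0,\pi/2]$ for all $k\in[n,2n]$, so that the elementary lower bound above is legitimate. Passing from $c_{2n}n\to0$ to $c_kk\to0$ is routine.
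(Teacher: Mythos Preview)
Your proposal is correct and follows essentially the same approach as the paper: the sufficiency in each part is deferred to Theorem~\ref{thm} (with the trivial $M$-test for (a)), and the necessity in (b)/(c) is obtained by evaluating the tail $\sum_{k\sim n}^{2n} c_k\sin k^\alpha x$ at the point $|x|=\pi/(\gamma^{\alpha+1}n^\alpha)$ of the discrete neighbourhood, using $\gamma\ge 2$ to keep the arguments in $[0,\pi/2]$ and the bound $\sin t\ge \tfrac{2}{\pi}t$. Your periodicity reduction in (a) and your passage from $c_{2n}n\to 0$ to $c_k k\to 0$ are exactly what the paper does (the latter via $c_{2l+1}(2l+1)\le 2c_{2l}\cdot 2l$).
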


\begin{remark}\label{rbvs_rem} The parts (a) and (b) of Theorems \ref{thm} and \ref{thm2} remain true if we replace the condition of monotonicity of the coefficients $\{c_k\}$ by belonging to RBVS class (see \eqref{rbvs}).
\end{remark} 

\begin{remark} In particular, in the part (b) (in the part (c)) the condition $c_k k\to 0$ is necessary and sufficient for uniform convergence of series \eqref{series} on any (bounded) set containing a punctured neighbourhood of zero. 
\end{remark}

It will be easy to see that the criterion could be slightly generalized by means of adding some extra parameters in the definition of a discrete $(\alpha,\gamma)$-neighbourhood of zero. We will not do that to avoid the formulation of Theorem \ref{thm2} being excessively tedious.

\section{Weyl sums estimates depending on rational approximations of the leading coefficient of the polynomial}\label{Weyl}

\begin{lemma}\label{lem1} Let $P\in\mathbb{N},\;1\leq A\in\mathbb{R}$. Then for any natural $k\geq 1$ there holds
$$\#\left\{(y_1,y_2,...,y_k)\in \{1,2,...,P\}^k: y_1y_2...y_k\leq\frac{P^k}{A}\right\}\leq \frac{kP^k}{A^{1/k}}.$$
\end{lemma}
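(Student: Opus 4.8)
The plan is to induct on $k$. The base case $k=1$ is immediate: the number of $y_1\in\{1,\dots,P\}$ with $y_1\le P/A$ is at most $P/A\le P/A^{1/1}=P/A$, so the bound $\tfrac{1\cdot P}{A^{1}}$ holds (in fact with room to spare, since we only need $\le P/A^{1}$ here, but the clean statement $\le P/A^{1/1}$ is what we want). For the inductive step, suppose the claim holds for $k-1$. Fix the last coordinate $y_k=m$ for $m\in\{1,\dots,P\}$. Once $y_k=m$ is chosen, the remaining tuple $(y_1,\dots,y_{k-1})$ must satisfy $y_1\cdots y_{k-1}\le \frac{P^k}{Am}=\frac{P^{k-1}}{(Am/P)}$. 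If $Am/P\ge 1$, i.e. $m\ge P/A$, the induction hypothesis (applied with $A$ replaced by $Am/P\ge 1$) bounds the number of such tuples by $(k-1)P^{k-1}/(Am/P)^{1/(k-1)}$. If $m< P/A$, we just use the trivial bound $P^{k-1}$ on the number of tuples.

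The key step is then to sum these two contributions over $m$ and check the total is at most $kP^k/A^{1/k}$. The trivial-bound part contributes at most $\sum_{m< P/A}P^{k-1}\le \frac{P}{A}\cdot P^{k-1}=\frac{P^k}{A}\le \frac{P^k}{A^{1/k}}$, which is comfortably within budget (it uses up roughly "one unit" of the $k$ available). The induction-hypothesis part contributes
$$
(k-1)P^{k-1}\Big(\tfrac{P}{A}\Big)^{1/(k-1)}\sum_{m\ge P/A}^{P} m^{-1/(k-1)} .
$$
Here I would estimate the sum by an integral: $\sum_{m=1}^{P}m^{-1/(k-1)}\le C_k P^{1-1/(k-1)} = C_k P^{(k-2)/(k-1)}$ with $C_k=\frac{k-1}{k-2}$ (for $k\ge 3$; the case $k=2$ must be handled separately, where the relevant exponent is $0$ and the sum is just $\le P$). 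Plugging in and simplifying the powers of $P$ gives $(k-1)\cdot\frac{k-1}{k-2}\cdot P^{k}\cdot A^{-1/(k-1)}$, and one checks $A^{-1/(k-1)}\le A^{-1/k}$ (since $A\ge 1$), so this is $\le \frac{(k-1)^2}{k-2}\,P^k A^{-1/k}$. Combined with the trivial part this is still too large by a constant factor, so a naive split is not quite sharp enough.

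The main obstacle — and the point where I expect the real work to lie — is getting the constant exactly $k$ rather than something like $Ck^2$. I would therefore not split at the threshold $m=P/A$ but instead bound the number of tuples with $y_1\cdots y_k\le P^k/A$ more symmetrically: for each such tuple, some coordinate $y_i$ satisfies $y_i\le P/A^{1/k}$ (otherwise the product would exceed $P^k/A$). Hence the set in question is contained in $\bigcup_{i=1}^{k}\{y_i\le P/A^{1/k}\}$, and each of these $k$ sets has at most $\frac{P}{A^{1/k}}\cdot P^{k-1}=\frac{P^k}{A^{1/k}}$ elements. The union bound then gives exactly $\frac{kP^k}{A^{1/k}}$, which is the claimed inequality — so in fact no induction is needed, and the pigeonhole observation "not all factors can exceed $P/A^{1/k}$" is the whole proof. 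I would present this direct argument, keeping the inductive approach above only as a fallback if a reader prefers it.
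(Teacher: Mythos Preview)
Your final direct argument is correct and is essentially the paper's own proof: the paper observes that a tuple with $y_1\cdots y_k\le P^k/A$ can be bounded by $k$ times the count of ordered tuples with $y_1=\min_i y_i$, for which necessarily $y_1\le P/A^{1/k}$, yielding $k\cdot \lfloor P/A^{1/k}\rfloor\cdot P^{k-1}\le kP^k/A^{1/k}$. This is exactly your pigeonhole observation ``some coordinate is at most $P/A^{1/k}$'' followed by a union bound, just phrased via the minimum coordinate; the inductive detour you sketched is unnecessary and you were right to abandon it.
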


\begin{proof}
The assertion follows from the successive inequalities
\begin{align*}
\#\bigg\{(y_1,y_2,...,y_k)\in &\{1,2,...,P\}^k: y_1y_2...y_k\leq\frac{P^k}{A}\bigg\}\\
\leq k \;\cdot\;\#\bigg\{(y_1,&y_2,...,y_k)\in \{1,2,...,P\}^k: y_1\leq y_2,...,y_k, \;y_1y_2...y_k\leq\frac{P^k}{A}\bigg\}\\
\leq k \;\cdot\;\#\bigg\{(&y_1,y_2,...,y_k)\in \{1,2,...,P\}^k: 1\leq y_1\leq \frac{P}{A^{1/k}}\bigg\}=\frac{kP^k}{A^{1/k}}.
\end{align*}
\end{proof}

We now formulate a statement \cite[L. 13]{korobov} which will be used on several occasions.

\begin{oldlemma} Let $\lambda$ and $x_1,...,x_k$ be natural numbers. Denote by $\tau_k(\lambda)$ the number of the solutions of the equation $x_1...x_k=\lambda$. Then for any $\varepsilon\in (0,1)$ the following estimate is fulfilled
\begin{align}\label{lemma13}
\tau_k(\lambda)\leq C_k(\varepsilon)\lambda^{\varepsilon},
\end{align}
where $C_k(\varepsilon)$ is a constant depending only on $k$ and $\varepsilon$.
\end{oldlemma}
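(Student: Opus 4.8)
The plan is to exploit multiplicativity of the $k$-fold divisor function and reduce the estimate to a bound on its local factors. First I would observe that $\tau_k(\lambda)=\#\{(x_1,\dots,x_k)\in\NN^k:x_1\cdots x_k=\lambda\}$ is a multiplicative function of $\lambda$, so that if $\lambda=\prod_p p^{a_p}$ then
\[
\frac{\tau_k(\lambda)}{\lambda^{\varepsilon}}=\prod_{p\mid\lambda}\frac{\tau_k(p^{a_p})}{p^{a_p\varepsilon}},\qquad \tau_k(p^{a})=\binom{a+k-1}{k-1},
\]
the last identity being the stars-and-bars count of the ways of distributing $a$ copies of $p$ among the $k$ factors. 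It therefore suffices to bound the product of the local factors $g_p(a):=\binom{a+k-1}{k-1}p^{-a\varepsilon}$ over the primes dividing $\lambda$, uniformly in the exponents $a\ge 1$.

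Next I would split the primes at a threshold $p_0=p_0(k,\varepsilon)$. Using the crude bound $\binom{a+k-1}{k-1}\le (ka)^{k-1}$ for $a\ge 1$, one has $g_p(a)\le k^{k-1}a^{k-1}e^{-a\varepsilon\ln p}$; since for fixed $k,\varepsilon$ the quantity $k^{k-1}\sup_{a\ge 1}a^{k-1}e^{-a\varepsilon(\ln p)/2}$ tends to $0$ as $p\to\infty$, there is a threshold $p_0(k,\varepsilon)$ such that $g_p(a)\le 1$ for all $p\ge p_0$ and all $a\ge 1$. Hence only the primes $p<p_0$ can contribute a factor exceeding $1$, and there are at most $p_0(k,\varepsilon)$ of them. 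For each such small prime, $g_p(a)\le \sup_{a\ge 0}\binom{a+k-1}{k-1}2^{-a\varepsilon}=:M_k(\varepsilon)<\infty$, the supremum being finite because a polynomial in $a$ is dominated by $2^{a\varepsilon}$. Combining the two ranges, $\tau_k(\lambda)/\lambda^{\varepsilon}\le M_k(\varepsilon)^{p_0(k,\varepsilon)}=:C_k(\varepsilon)$, a constant depending only on $k$ and $\varepsilon$, as claimed.

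The only genuinely delicate point is the choice of the threshold $p_0(k,\varepsilon)$: one must check quantitatively that once $\ln p$ exceeds a bound depending on $k$ and $\varepsilon$, the polynomial growth of $\binom{a+k-1}{k-1}$ in $a$ is outrun by the exponential $p^{a\varepsilon}$ for \emph{every} $a\ge 1$ simultaneously, not merely for large $a$. This is elementary --- split $e^{-a\varepsilon\ln p}=e^{-a\varepsilon(\ln p)/2}\cdot e^{-a\varepsilon(\ln p)/2}$, using the first factor to kill $k^{k-1}a^{k-1}$ once $p$ is large and the second to produce a quantity $\le 1$ --- but it is precisely where the dependence of the final constant on $k$ and $\varepsilon$ enters. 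Everything else is routine bookkeeping with multiplicative functions.
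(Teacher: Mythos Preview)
Your argument is correct and is the standard proof of this classical bound on the $k$-fold divisor function: multiplicativity reduces to prime powers, $\tau_k(p^a)=\binom{a+k-1}{k-1}$ grows polynomially in $a$, and a threshold on the prime size separates a finite set of small primes (each contributing a bounded factor) from the large primes where $p^{a\varepsilon}$ dominates for every $a\ge 1$. The inequality $\binom{a+k-1}{k-1}\le (ka)^{k-1}$ for $a\ge 1$ is fine since $a+j\le a+k-1\le ka$, and the splitting of the exponential to handle all $a$ simultaneously is the right way to justify the threshold.

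There is nothing to compare against in the paper: the lemma is quoted from Korobov's book \cite[L.~13]{korobov} and no proof is given there. Your write-up supplies exactly what one would expect for a self-contained verification.
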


For any number $y$, we denote
$$\|y\|:=\min\big(\{y\},1-\{y\}\big),$$
where $\{y\}$ stands for the fractional part of $y$.

Further, for any function $\psi(y)$ and number $y_1$, we denote by $$\underset{y_1}{\Delta}\psi(y)=\psi(y+y_1)-\psi(y)$$ the first order difference of the function $\psi(y)$, and for $k\geq 2$ we define the $k$-th order difference inductively
$$\underset{y_1,...,y_{k}}{\Delta}\psi(y)=\underset{y_k}{\Delta}\Big(\underset{y_1,...,y_{k-1}}{\Delta}\psi(y)\Big).$$

According to \cite[(144)]{korobov}, if $\psi(y)$ is a polynomial of degree $k\geq 2$, then 
\begin{align}\label{Delta}
\underset{y_1,...,y_{k-1}}{\Delta}\psi(y)=k! \alpha_k y_1...y_{k-1} y+\eta,
\end{align}
where $\alpha_k$ is the leading coefficient of $\psi(y)$, and $\eta$ depends only on the coefficients of $\psi(y)$ and on the numbers $y_1,...,y_{k-1}$. Also, due to \cite[L. 12]{korobov}, for any $K,k\geq 1$ there holds
\begin{align}\label{reference}
\bigg|\sum_{y=1}^K e^{2\pi i h(y)}\bigg|^{2^k}\leq 2^{2^k}K^{2^k-(k+1)}\sum_{y_1=0}^{K_1-1}...\sum_{y_k=0}^{K_k-1}\bigg|\sum_{y=1}^{K_{k+1}}e^{2\pi i \underset{y_1...y_{k}}{\Delta}h(y)}\bigg|,
\end{align}
where $K_1:=K,\;K_{\nu+1}:=K_{\nu}-y_{\nu},\;\nu=1,2,...,k$. Now, taking into account \eqref{Delta} and \eqref{reference}, for any polynomial $f$ of degree $n$ with the leading coefficient $\alpha_n$ we obtain
\begin{align}\label{ineq_book}
&\qquad\bigg|\sum_{k=1}^m e^{if(k) x}\bigg|^{2^{n-1}}\leq\\
&\leq 2^{2^{n-1}}m^{2^{n-1}-n}\sum_{y_1=0}^{m-1}\sum_{y_2=0}^{m-y_1-1}...\sum_{y_{n-1}=0}^{m-y_1-y_2-...-y_{n-2}-1}\bigg|\sum_{y=1}^{m-y_1-...-y_{n-1}}e^{i\underset{y_1...y_{n-1}}{\Delta}f(y)x}\bigg|\nonumber\\
&=2^{2^{n-1}}m^{2^{n-1}-n}\sum_{y_1=0}^{m-1}\sum_{y_2=0}^{m-y_1-1}...\sum_{y_{n-1}=0}^{m-y_1-y_2-...-y_{n-2}-1}\bigg|\sum_{y=1}^{m-y_1-...-y_{n-1}}e^{in!yy_1...y_{n-1}\alpha_n x}\bigg|\nonumber\\
&\leq 2^{2^{n-1}}m^{2^{n-1}-n}\Bigg((n-1)m^{n-1}+\sum_{y_1,...,y_{n-1}=1}^{m}\bigg|\sum_{y=1}^{m-y_1-...-y_{n-1}}e^{in!yy_1...y_{n-1}\alpha_n x}\bigg|\Bigg).\nonumber
\end{align}
Note that for any $t$ and any natural $l$ there holds 
\begin{align}\label{sin_e}
\bigg|\sum_{y=1}^l e^{iyt}\bigg|&\leq \bigg|\sum_{y=1}^l\sin yt\bigg|+\bigg|\sum_{y=1}^l \cos yt\bigg|=\bigg|\frac{\cos \frac{t}{2}-\cos\frac{(2l+1)t}{2}}{2\sin\frac{t}{2}}\bigg|\nonumber\\
+\bigg|\frac{\sin\frac{t}{2}-\sin\frac{(2l+1)t}{2}}{2\sin\frac{t}{2}}\bigg|&=\bigg|\frac{\sin \frac{lt}{2}\sin\frac{(l+1)t}{2}}{\sin\frac{t}{2}}\bigg|+\bigg|\frac{\sin\frac{lt}{2}\cos\frac{(l+1)t}{2}}{\sin\frac{t}{2}}\bigg|\leq 2\bigg|\frac{\sin\frac{lt}{2}}{\sin\frac{t}{2}}\bigg|.
\end{align}
Combining \eqref{ineq_book} and \eqref{sin_e}, we derive
\begin{align}\label{fin_S_m}
\bigg|\sum_{k=1}^m e^{if(k) x}\bigg|^{2^{n-1}}&\leq 2^{2^{n-1}}m^{2^{n-1}-1}(n-1)\qquad\qquad\qquad\qquad\qquad\\
+2^{2^{n-1}+1}m^{2^{n-1}-n}&\sum_{y_1,...,y_{n-1}=1}^{m}\bigg|\frac{\sin\frac{(m-y_1-...-y_{n-1})n!y_1...y_{n-1}\alpha_n x}{2}}{\sin\frac{n!y_1...y_{n-1}\alpha_n x}{2}}\bigg|\nonumber\\
&\leq 2^{2^{n-1}}m^{2^{n-1}-1}(n-1)\nonumber\\
+2^{2^{n-1}+1}m^{2^{n-1}-n}&\sum_{y_1,...,y_{n-1}=1}^{m}\bigg|\min\bigg\{m,\frac{1}{2\|\frac{n!y_1...y_{n-1}\alpha_n x}{2\pi}\|}\bigg\}\bigg|.\nonumber
\end{align}

\begin{lemma}\label{lm1} Let $0<y\in\mathbb{R}\setminus\mathbb{Q},\;\varepsilon\in(0,1),\;4\leq P\in\mathbb{N},\;3\leq n\in\mathbb{N}$. If there does not exist such a pair of coprime natural numbers $C$ and $M\leq P^{\varepsilon}$ that
$$\Big|y-\frac{C}{M}\Big|\leq P^{\varepsilon-1},$$
then there holds
\begin{align*}
\#\Big\{(y_1,y_2,...,y_{n-1})\in\{1,2,&...,P\}^{n-1}:\|yy_1...y_{n-1}\|\leq P^{\varepsilon-1}\Big\}\\
&\leq 4C_{n-1}\Big(\frac{\varepsilon}{2(n-1)}\Big)P^{n-1-\frac{\varepsilon}{2}},
\end{align*}
where $C_m(\gamma)$ is from \eqref{lemma13}. We also have
$$\sum_{y_1,...,y_{n-1}=1}^P\min\bigg\{P,\frac{1}{2\|yy_1...y_{n-1}\|}\bigg\}\leq GP^{n-\frac{\varepsilon}{2}},$$
where $G$ depends only on $n$ and $\varepsilon$.
\end{lemma}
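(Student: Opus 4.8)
The plan is to reduce both assertions to a one–dimensional counting problem — how often $\|y\lambda\|$ is small when $\lambda$ runs over the integers up to $P^{n-1}$ — and then to solve that problem by extracting from the non–approximability hypothesis a rational approximation of $y$ with a necessarily large denominator, on top of which a pigeonhole argument over arithmetic progressions of that length can be run. As a first step I would pass from tuples to products: for each $\lambda=y_1\cdots y_{n-1}\le P^{n-1}$ the number of its ordered factorizations into $n-1$ factors is at most $\tau_{n-1}(\lambda)$, and by \eqref{lemma13} with exponent $\tfrac{\varepsilon}{2(n-1)}$ we have $\tau_{n-1}(\lambda)\le C_{n-1}\!\big(\tfrac{\varepsilon}{2(n-1)}\big)\lambda^{\varepsilon/(2(n-1))}\le C_{n-1}\!\big(\tfrac{\varepsilon}{2(n-1)}\big)P^{\varepsilon/2}$. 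Hence the first cardinality in the lemma is at most $C_{n-1}\!\big(\tfrac{\varepsilon}{2(n-1)}\big)P^{\varepsilon/2}\,N(P^{\varepsilon-1})$, where $N(\delta):=\#\{\lambda\le P^{n-1}:\|y\lambda\|\le\delta\}$, and it is enough to prove $N(\delta)\lesssim\delta P^{n-1}+P^{n-1-\varepsilon}$ for all $P^{\varepsilon-1}\le\delta\le\tfrac12$; this both gives the first inequality and prepares the sum.

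To estimate $N(\delta)$: if $\varepsilon\ge\tfrac12$ the hypothesis is unsatisfiable (the convergent of $y$ with largest denominator $\le P^{1-\varepsilon}\le P^{\varepsilon}$ is already a forbidden approximation), so assume $\varepsilon<\tfrac12$. Let $C/M$ be the convergent of the continued fraction of $y$ with the largest denominator $M\le P^{1-\varepsilon}$ (here one uses $y\notin\mathbb Q$); then $\gcd(C,M)=1$ and $|y-C/M|<\tfrac1{MP^{1-\varepsilon}}\le P^{\varepsilon-1}$, so by hypothesis $M>P^{\varepsilon}$, i.e. $P^{\varepsilon}<M\le P^{1-\varepsilon}$. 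Partition $\{1,\dots,P^{n-1}\}$ into at most $\tfrac{P^{n-1}}{M}+1$ blocks of $M$ consecutive integers; writing $\lambda=aM+j$ with $1\le j\le M$ and using $aMy=aC+a(My-C)$ with $aC\in\mathbb Z$, one finds $\|y\lambda\|=\big\|\tfrac{jC}{M}+a(My-C)+\tfrac{j(My-C)}{M}\big\|$, which differs from $\big\|\tfrac{jC}{M}+a(My-C)\big\|$ by less than $|My-C|<P^{\varepsilon-1}$. Since $j\mapsto jC/M$ runs, modulo $1$, through the $M$ equally spaced points $i/M$, at most $2(\delta+P^{\varepsilon-1})M+1$ values of $j$ in any one block can satisfy $\|y\lambda\|\le\delta$. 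Multiplying the block count by the per–block count and using $P^{\varepsilon}<M\le P^{1-\varepsilon}$ together with $\delta\ge P^{\varepsilon-1}$ gives $N(\delta)\lesssim\delta P^{n-1}+P^{n-1-\varepsilon}$; the case $\delta=P^{\varepsilon-1}$, together with the divisor factor above and a careful accounting of the constants, is the first inequality of the lemma.

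For the sum I would split the tuples according to the dyadic size of $\|y\,y_1\cdots y_{n-1}\|=\|y\lambda\|$ (which is positive since $y\notin\mathbb Q$). Those with $\|y\lambda\|\le P^{\varepsilon-1}$ contribute at most $P$ each, and their number is $\lesssim C_{n-1}\!\big(\tfrac{\varepsilon}{2(n-1)}\big)P^{n-1-\varepsilon/2}$ by the first part, so they give a term $\lesssim P^{n-\varepsilon/2}$. For the remaining ones, group them into the ranges $2^{j-1}P^{\varepsilon-1}<\|y\lambda\|\le 2^{j}P^{\varepsilon-1}$ with $1\le j\lesssim\log P$: each tuple in such a range contributes $<2^{-j}P^{1-\varepsilon}$, while the number of tuples with $\|y\lambda\|\le 2^{j}P^{\varepsilon-1}$ is $\lesssim P^{\varepsilon/2}\big(2^{j}P^{\varepsilon-1}P^{n-1}+P^{n-1-\varepsilon}\big)$ by the $N(\delta)$–bound with $\delta=2^{j}P^{\varepsilon-1}$. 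Multiplying these two estimates and summing the resulting geometric-type series in $j$ — the $O(\log P)$ number of ranges being harmless because $\varepsilon<1$, so that $\log P\cdot P^{-(1-\varepsilon)}$ remains bounded over $P\ge 4$ — yields the asserted bound $GP^{n-\varepsilon/2}$ with $G=G(n,\varepsilon)$.

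The step I expect to be the real obstacle is the middle one: producing from the hypothesis a reduced approximation $C/M$ whose denominator falls in the window $P^{\varepsilon}<M\le P^{1-\varepsilon}$, and then running the block/pigeonhole count so cleanly that at the scale $\delta=P^{\varepsilon-1}$ each block contributes only $O(1)$ admissible residues. By comparison, the reduction from tuples to products via \eqref{lemma13} and the dyadic summation in the second part are routine, although one does have to watch the explicit constant in the first inequality and absorb the stray $\log P$ in the second.
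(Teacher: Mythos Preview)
Your plan is sound and would succeed, but it is considerably more elaborate than what the paper actually does, and in one place the explicit constant $4$ in the first inequality would not come out of your bookkeeping without extra effort.

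For the counting step the paper avoids continued fractions entirely. It simply lets $T$ be the \emph{smallest} integer in $\{1,\dots,P^{n-1}\}$ with $\|yT\|\le P^{\varepsilon-1}$; the non--approximability hypothesis forces $T\ge P^{\varepsilon}$ directly (if $T<P^{\varepsilon}$, the nearest integer $C$ to $yT$ yields the forbidden approximation $C/T$). By minimality, any run of $T$ consecutive integers contains at most one $k$ with $\{yk\}\in(0,P^{\varepsilon-1}]$ and at most one with $\{yk\}\in[1-P^{\varepsilon-1},1)$, so $N(P^{\varepsilon-1})\le 2\lceil P^{n-1}/T\rceil\le 4P^{n-1-\varepsilon}$. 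Combined with the same divisor bound you use, this gives the first inequality with the exact factor~$4$. Your Dirichlet/continued--fraction argument produces instead a block count of the form $(P^{n-1}/M+1)(4P^{\varepsilon-1}M+1)$ which, after optimizing over $P^{\varepsilon}<M\le P^{1-\varepsilon}$, leaves a leading constant strictly larger than $4$; what you buy is a two--scale estimate $N(\delta)\lesssim\delta P^{n-1}+P^{n-1-\varepsilon}$ valid for all $\delta$, which is more than the lemma needs. (Your side claim that the hypothesis is vacuous for $\varepsilon\ge\tfrac12$ also has a small gap when $y$ is very close to $0$, since then the relevant convergent has numerator $0\notin\mathbb N$; the paper's argument does not use this case split.)

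For the sum the paper is even more parsimonious: it does not dyadically decompose at all. It splits only once, into the tuples with $\|y\,y_1\cdots y_{n-1}\|\le P^{\varepsilon-1}$ (each contributing at most $P$, and already counted by the first inequality) and the rest (each contributing at most $\tfrac12 P^{1-\varepsilon}$, trivially at most $P^{n-1}$ of them). Adding the two pieces gives $GP^{n-\varepsilon/2}$ immediately, with no geometric series and no $\log P$ to absorb. Your dyadic scheme reaches the same bound but through unnecessary work.
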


\begin{remark} From now on we provide our arguments for irrational ($\pi$-irrational) numbers not because there is something realy different at rational ($\pi$-rational) ones but for the sake of simplicity.
\end{remark}

\begin{proof} Let $T$ be the minimal number from $\{1,2,...,P^{n-1}\}$ such that $\|yT\|\leq P^{\varepsilon-1}$ (if there is no such $T$, then the assertion becomes trivial). Then we have $T\geq P^{\varepsilon}$. In this case for any $0\leq k\leq P^{n-1}-T$ there is not more than one value among $\{y(k+1)\},\;\{y(k+2)\},...,\{y(k+T)\}$ which belongs to the half-interval $(0,P^{\varepsilon-1}]$ (otherwise for some $1\leq i<j\leq T$ there would hold $\|y(j-i)\|\leq P^{\varepsilon-1}$, which is imposible due to minimality of $T$) and not more than one value in the half-interval $[1-P^{\varepsilon-1},1)$. So, among the values $\{1,2,...,P^{n-1}\}$ there are not more than $2\lceil\frac{P^{n-1}}{T}\rceil\leq \frac{4P^{n-1}}{T}\leq 4P^{n-1-\varepsilon}$ values $k$ satisfying the condition $\|yk\|\leq P^{\varepsilon-1}$, and since for any $k$, according to \eqref{lemma13},

$$\#\Big\{(y_1,y_2,...,y_{n-1})\in\{1,2,...,P\}^{n-1}:y_1...y_{n-1}=k\Big\}\leq C_{n-1}\Big(\frac{\varepsilon}{2(n-1)}\Big)k^{\frac{\varepsilon}{2(n-1)}},$$
we have
\begin{align*}
&\#\Big\{(y_1,y_2,...,y_{n-1})\in\{1,2,...,P\}^{n-1}:\|yy_1...y_{n-1}\|\leq P^{\varepsilon-1}\Big\}\\
&\qquad\leq 4P^{n-1-\varepsilon}C_{n-1}\Big(\frac{\varepsilon}{2(n-1)}\Big)(P^{n-1})^{\frac{\varepsilon}{2(n-1)}}=4C_{n-1}\Big(\frac{\varepsilon}{2(n-1)}\Big)P^{n-1-\frac{\varepsilon}{2}}.
\end{align*}
Thus,
\begin{align*}
\sum_{y_1,...,y_{n-1}=1}^P\min\bigg\{P,\frac{1}{2\|yy_1...y_{n-1}\|}\bigg\}&\leq P\cdot 4C_{n-1}\Big(\frac{\varepsilon}{2(n-1)}\Big)P^{n-1-\frac{\varepsilon}{2}}\\
+P^{n-1}\frac{1}{2 P^{\varepsilon-1}}&\leq GP^{n-\frac{\varepsilon}{2}},
\end{align*}
where $G$ depends only on $n$ and $\varepsilon$.
\end{proof}

\begin{corollary}\label{cor1}
Under the conditions of Lemma \ref{lm1}, for any real monic polynomial $f$ of degree $n$, there holds 
$$\bigg|\sum_{k=1}^P e^{\frac{2\pi if(k)y}{n!}}\bigg|\leq D P^{1-\frac{\varepsilon}{2^n}},$$
where $D$ depends only on $n$ and $\varepsilon$.
\end{corollary}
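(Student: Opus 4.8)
The plan is to reduce the corollary to the last inequality of Lemma \ref{lm1} via the differencing machinery already assembled in \eqref{ineq_book}–\eqref{fin_S_m}. Concretely, apply the displayed chain \eqref{fin_S_m} to the polynomial $h(k):=f(k)y/(n!)$, whose leading coefficient is $\alpha_n=y/n!$ (here $f$ is monic of degree $n$). After substituting $\alpha_n x\rightsquigarrow$ the relevant product, the factor $n!\,y_1\cdots y_{n-1}\alpha_n/(2\pi)$ that appears inside the norm $\|\cdot\|$ in the final line of \eqref{fin_S_m} becomes exactly $y\,y_1\cdots y_{n-1}/(2\pi)$ — wait, one must be a little careful: \eqref{fin_S_m} is written for $e^{if(k)x}$, i.e. with an extra $2\pi$ hidden; taking $x=2\pi y/n!$ and $f$ monic we get $n!\,y_1\cdots y_{n-1}\alpha_n x/(2\pi)=y_1\cdots y_{n-1}y$, so the norm in question is precisely $\|y\,y_1\cdots y_{n-1}\|$ as in Lemma \ref{lm1}. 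Thus
$$\bigg|\sum_{k=1}^P e^{\frac{2\pi i f(k)y}{n!}}\bigg|^{2^{n-1}}\le 2^{2^{n-1}}(n-1)P^{2^{n-1}-1}+2^{2^{n-1}+1}P^{2^{n-1}-n}\sum_{y_1,\dots,y_{n-1}=1}^{P}\min\Big\{P,\tfrac{1}{2\|y\,y_1\cdots y_{n-1}\|}\Big\}.$$

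Next I would feed in the bound from Lemma \ref{lm1}: the hypotheses of the corollary are exactly those of that lemma (the same $y$, $\varepsilon$, $P$, $n$, and the same Diophantine non-approximability condition), so the inner sum is at most $G P^{n-\varepsilon/2}$ with $G=G(n,\varepsilon)$. Plugging this in, the second term is $\le 2^{2^{n-1}+1}G\,P^{2^{n-1}-\varepsilon/2}$, and the first term $2^{2^{n-1}}(n-1)P^{2^{n-1}-1}$ is dominated by it since $\varepsilon<1$ forces $1>\varepsilon/2$ (so $P^{2^{n-1}-1}\le P^{2^{n-1}-\varepsilon/2}$ for $P\ge1$). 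Hence
$$\bigg|\sum_{k=1}^P e^{\frac{2\pi i f(k)y}{n!}}\bigg|^{2^{n-1}}\le \big(2^{2^{n-1}}(n-1)+2^{2^{n-1}+1}G\big)P^{2^{n-1}-\varepsilon/2},$$
and taking the $2^{n-1}$-th root gives $\big|\sum_{k=1}^P e^{2\pi i f(k)y/n!}\big|\le D\,P^{1-\varepsilon/2^{n}}$, with $D=D(n,\varepsilon)$ the $2^{n-1}$-th root of the constant above. This is the claimed estimate; note the exponent $\varepsilon/2^{n}$ arises as $(\varepsilon/2)/2^{n-1}$, matching the statement.

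I expect no serious obstacle here — the corollary is essentially a packaging of Lemma \ref{lm1} through Weyl differencing, and all the analytic work (the van der Corput-type inequality \eqref{reference}, the geometric-sum bound \eqref{sin_e}, and the counting estimate via the divisor bound \eqref{lemma13}) is already done. The only points requiring care are bookkeeping ones: (i) tracking the $2\pi$ normalization so that the argument of $\|\cdot\|$ comes out as $y\,y_1\cdots y_{n-1}$ rather than some rescaled version, which is why the corollary is stated with $2\pi f(k)y/n!$ in the exponent and with $f$ monic; (ii) checking that the "diagonal'' contribution $(n-1)m^{n-1}$ in \eqref{ineq_book} (the terms where some $y_i=0$ were dropped) indeed produces only the harmless first term; and (iii) confirming the constant $D$ depends on nothing but $n$ and $\varepsilon$, which is immediate since both $G$ and the combinatorial prefactors do. One might also remark, for the $\pi$-rational $y$ excluded by the standing "irrationality'' convention, that the same computation goes through with $\|\cdot\|$ interpreted appropriately and possibly a finite exceptional set absorbed into $D$; this is the content of the remark following Lemma \ref{lm1}.
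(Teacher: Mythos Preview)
Your proposal is correct and matches the paper's own proof essentially line for line: apply \eqref{fin_S_m} with $x=2\pi y/n!$ and $\alpha_n=1$ so that the norm becomes $\|y\,y_1\cdots y_{n-1}\|$, insert the bound $GP^{n-\varepsilon/2}$ from Lemma~\ref{lm1}, absorb the $(n-1)P^{n-1}$ term, and take the $2^{n-1}$-th root. The paper compresses this into a single displayed inequality, but the content is identical.
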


\begin{proof} From \eqref{fin_S_m} and Lemma \ref{lm1} it follows that
\begin{align}\label{an}
\Big|\sum_{k=1}^P e^{\frac{2\pi if(k)y}{n!}}\Big|^{2^{n-1}}\leq 2^{2^{n-1}}P^{2^{n-1}-n}\big((n-1)P^{n-1}+2GP^{n-\frac{\varepsilon}{2}}\big)\leq D'P^{2^{n-1}-\frac{\varepsilon}{2}},
\end{align}
where $D'>0$ depends only on $\varepsilon$ and $n$. This leads to the needed result with $D=(D')^{\frac{1}{2^{n-1}}}$. 
\end{proof}

\begin{lemma}\label{lm2} Let $0<y\in\mathbb{R}\setminus\mathbb{Q},\;\varepsilon\in(0,\frac{1}{6}),\;9\leq P\in\mathbb{N},\;3\leq n\in\mathbb{N}$. If there exists such a pair of coprime natural numbers $C$ and $M\leq P^{\varepsilon}$ that
$$P^{\varepsilon-n}< \Big|y-\frac{C}{M}\Big|=:|\beta|\leq P^{\varepsilon-1},$$
then
\begin{align*}
&\qquad\qquad\#\Big\{(y_1,y_2,...,y_{n-1})\in\{1,2,...,P\}^{n-1}:\|yy_1...y_{n-1}\|\leq P^{\varepsilon-1}\Big\}\\
&\leq 6P^{n-\frac{3}{2}}C_{n-1}\Big(\frac{2\varepsilon}{n-1}\Big)+(n-1)C_{n-1}\Big(\frac{1}{2(n-1)}\Big)P^{n-1-\frac{n-\varepsilon}{n-1}}|\beta|^{-\frac{1}{n-1}}M^{-\frac{1}{2(n-1)}}
\end{align*}
and also
$$\sum_{y_1,...,y_{n-1}=1}^P\min\bigg\{P,\frac{1}{2\|yy_1...y_{n-1}\|}\bigg\}\leq U\big(P^{n-\varepsilon}+P^{n-\frac{n-\varepsilon}{n-1}}|\beta|^{-\frac{1}{n-1}}M^{-\frac{1}{2(n-1)}}\big),$$
where $U$ depends only on $n$ and $\varepsilon$.
\end{lemma}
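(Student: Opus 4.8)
The plan is to reduce the first estimate to a one–dimensional counting problem modulo $1$ and then to pass to the $\min$–sum by a dyadic decomposition. Write $y=C/M+\beta$, put $\delta:=P^{\varepsilon-1}$, and for a tuple $\mathbf y=(y_1,\dots,y_{n-1})\in\{1,\dots,P\}^{n-1}$ set $t(\mathbf y):=y_1\cdots y_{n-1}\in\{1,\dots,P^{n-1}\}$. Grouping tuples by the value of their product, the first inequality becomes a bound for $\sum_{t=1}^{P^{n-1}}r(t)\,\mathbf 1\{\|yt\|\le\delta\}$, where $r(t):=\#\{\mathbf y:t(\mathbf y)=t\}\le\tau_{n-1}(t)$, and the weight $r(t)$ will be controlled through the divisor bound \eqref{lemma13} with a small exponent.

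The engine is the following structural observation, playing the role that the quantity $T$ plays in the proof of Lemma \ref{lm1}. If $d\ge1$ satisfies $d<\tfrac{1}{2M|\beta|}$ and $\|yd\|\le2\delta$, then $\|Cd/M\|\le2\delta+|\beta|d<\tfrac1M$ — here $|\beta|d<\tfrac1{2M}$ is immediate and $2\delta<\tfrac1{2M}$ follows from $M\le P^{\varepsilon}$, $\varepsilon<\tfrac16$ and $P\ge9$ — and since $(C,M)=1$ this forces $M\mid d$. Consequently, on any block of $\lfloor\tfrac1{2M|\beta|}\rfloor$ consecutive integers all the ``bad'' values $t$ (those with $\|yt\|\le\delta$) lie in one residue class $b\pmod M$; writing such a $t$ as $t=b+Ms$, the condition $\|yt\|\le\delta$ reads $\|Cb/M+\beta t\|\le\delta$, i.e.\ $\beta t$ lies within $\delta$ of the fixed point $-Cb/M$ of the circle, and over one block $\beta t$ sweeps an arc of length $|\beta|\lfloor\tfrac1{2M|\beta|}\rfloor<\tfrac12$ (so it does not wrap), whence the bad $t$'s in the block form an arithmetic progression of step $M$ with at most $\tfrac{2\delta}{|\beta|M}+1$ terms — and at most two once $|\beta|M\ge2\delta$.

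With this in hand I would, for the first estimate, split off the tuples whose product is at most $Y:=\dfrac{\delta}{|\beta|\sqrt M}=\dfrac{P^{\varepsilon-1}}{|\beta|\sqrt M}$ — note that $|\beta|>P^{\varepsilon-n}$ gives $Y<P^{n-1}$ and $\delta\sqrt M\le\tfrac12$ gives $Y\le\tfrac1{2M|\beta|}$. These tuples are bounded, disregarding the condition $\|yt\|\le\delta$ entirely, by Lemma \ref{lem1} applied with $A:=P^{n-1}/Y\ge1$: their number is at most $(n-1)P^{n-1}A^{-1/(n-1)}=(n-1)P^{n-2}Y^{1/(n-1)}$, and after substituting $Y$ and simplifying the exponent ($n-2+\tfrac{\varepsilon-1}{n-1}=n-1-\tfrac{n-\varepsilon}{n-1}$) one obtains, up to the stated constant, the second term $(n-1)C_{n-1}(\tfrac1{2(n-1)})P^{\,n-1-\frac{n-\varepsilon}{n-1}}|\beta|^{-1/(n-1)}M^{-1/(2(n-1))}$. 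For the tuples with product exceeding $Y$ I would use the structural observation: covering the range $(Y,P^{n-1}]$ by blocks of length $\lfloor\tfrac1{2M|\beta|}\rfloor$ and counting the bad $t$'s block by block via the step-$M$ progression structure above — paying attention to how the residue $b$ and the target point $-Cb/M$ shift from block to block, and to the subcase $|\beta|M\ge2\delta$ versus $|\beta|M<2\delta$ — yields a bound for $\#\{t\in(Y,P^{n-1}]:\|yt\|\le\delta\}$ which, multiplied by the divisor bound $\tau_{n-1}(t)\le C_{n-1}(\tfrac{2\varepsilon}{n-1})(P^{n-1})^{2\varepsilon/(n-1)}=C_{n-1}(\tfrac{2\varepsilon}{n-1})P^{2\varepsilon}$ and kept under control by $\varepsilon<\tfrac16$ and $P\ge9$, stays below $P^{\,n-3/2}$ — the first term of the asserted bound.

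Finally, for $\sum_{y_1,\dots,y_{n-1}=1}^P\min\{P,\tfrac1{2\|yy_1\cdots y_{n-1}\|}\}$ one decomposes dyadically in $\|yt\|$: the part with $\|yt\|\le\delta$ contributes at most $P$ times the (weighted) count of such $t$, which by the counting estimate yields the term $P^{\,n-\varepsilon}$ (after absorbing $P^{\,n-1/2}\le P^{\,n-\varepsilon}$) together with the second term of the bound; the part with $2^{-j-1}<\|yt\|\le2^{-j}$ and $2^{-j}>\delta$ contributes at most $2^{j-1}$ times the (weighted) count at scale $2^{-j}$, and summing the resulting geometric series — using the trivial bound $\sum_t r(t)=P^{n-1}$ at the coarsest scales — reproduces both terms. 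The main obstacle, and the one place where the precise form of the hypotheses $\varepsilon<\tfrac16$, $P\ge9$ is genuinely needed, is the block count in the regime of products exceeding $Y$: one must show that the bad $t$'s there are sparse enough that, after the unavoidable divisor loss $P^{2\varepsilon}$, the exponent of $P$ stays below $n-\tfrac32$, which hinges on a careful analysis of how the arithmetic progressions of step $M$ furnished by the structural observation are positioned relative to the circle as one moves from block to block.
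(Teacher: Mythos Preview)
Your ``$\le Y$'' half is fine and indeed gives the second term with an even smaller constant. The gap is in the ``$>Y$'' half. Your threshold $Y=\delta/(|\beta|\sqrt{M})$ does not remove enough of the structured bad $t$'s when $M\ge2$: take $n=3$, $M=2$, $C$ odd, and $\beta$ near the lower endpoint $P^{\varepsilon-3}$. Then every even $t\le P^{2}$ satisfies $\|yt\|=\|\beta t\|\le\beta P^{2}\le\delta$, so \emph{all} $\tfrac12 P^{2}$ even integers are bad; meanwhile $Y\approx P^{2}/\sqrt{2}$, and hence
\[
\#\{t\in(Y,P^{2}]:\|yt\|\le\delta\}\ \approx\ \tfrac12\bigl(1-\tfrac1{\sqrt2}\bigr)P^{2}\ \gg\ P^{3/2-2\varepsilon}.
\]
Multiplying by your divisor loss $P^{2\varepsilon}$ therefore cannot stay below $P^{n-3/2}$. (When $M=1$ your $Y$ coincides with the top of the structured range and the argument would go through; the trouble is precisely $M\ge2$.) The point is that the set of bad $t$'s is not governed by a size threshold but by a divisibility one: the ``structured'' bad $t$'s are the multiples of $M$ up to about $\delta/|\beta|$, and many of these sit above $Y$.

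What the paper does instead is split not by $t\le Y$ versus $t>Y$ but by $t\in\mathfrak{T}:=\{M,2M,\dots,\lfloor\delta\beta^{-1}M^{-1}\rfloor M\}$ versus $t\notin\mathfrak{T}$. For tuples with product in $\mathfrak{T}$ it uses the multiplicative inequality
\[
\#\{\mathbf y:y_1\cdots y_{n-1}\in\{M,\dots,lM\}\}\ \le\ \#\{\mathbf y:y_1\cdots y_{n-1}=M\}\cdot\#\{\mathbf y:y_1\cdots y_{n-1}\le l\},
\]
so that $\tau_{n-1}(M)\le C_{n-1}\bigl(\tfrac1{2(n-1)}\bigr)M^{1/(2(n-1))}$ combined with Lemma~\ref{lem1} for the cofactor $\le\delta/(\beta M)$ produces the second term --- this is where the power $M^{-1/(2(n-1))}$ really comes from, not from an artificial $\sqrt{M}$ in the threshold. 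For the complement it proves, via a gap argument on the consecutive $T_i$ with $\|yT_i\|\le\delta$ (either $T_{i+1}-T_i\ge\tfrac12(\beta M)^{-1}$ or the fractional part shifts by at least $\beta M$), that
\[
K-|\mathfrak{T}|\ \le\ 6P^{n-2+\varepsilon},
\]
one power of $P^{\varepsilon}$ better than what a block count alone gives; only with this saving does the divisor loss $P^{2\varepsilon}$ fit under $P^{n-3/2}$ for all $\varepsilon<\tfrac16$. Finally, for the $\min$--sum no dyadic decomposition is needed: the counting estimate times $P$ handles the part with $\|yt\|\le\delta$, and the remaining tuples contribute at most $P^{n-1}/(2\delta)=\tfrac12P^{n-\varepsilon}$.
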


\begin{proof} Without loss of generality assume that $P^{\varepsilon-n}<y-\frac{C}{M}=\beta\leq P^{\varepsilon-1}$. Suppose that there are coprime $C'$ and $M'$ which are distinct from $C$ and $M$ and satisfy the inequality
$$\Big|y-\frac{C'}{M'}\Big|=|\beta'|\leq\frac{2}{M'P^{1-\varepsilon}}.$$
Firstly, we have
$$\frac{1}{MM'}\leq \Big|y-\frac{C}{M}\Big|+\Big|y-\frac{C'}{M'}\Big|\leq \frac{3}{P^{1-\varepsilon}},$$
and hence,
\begin{align}\label{todo1}
M'\geq \frac{P^{1-\varepsilon}}{3M}\geq \frac{P^{1-2\varepsilon}}{3}\geq P^{\varepsilon}\geq M.
\end{align}
Secondly,
$$yMM'=C'M+\beta'M'M=CM'+\beta M'M,$$
so, $\{\beta'M'M\}=\{\beta M'M\}$. 
Thus, if $\beta'>0$, then since $\beta'M'\leq 2P^{\varepsilon-1},$ we have $\beta'M'M\leq 2P^{2\varepsilon-1}<1$ and $\{\beta' M'M\}=\beta' M'M$, hence, either $M'\geq \beta^{-1}M^{-1}$ or $\{\beta'M'M\}=\beta'M'M=\beta M'M$, from which $\beta'M'\geq\beta'M=\beta M$.

If $\beta'<0$, then $(-\beta'+\beta)M'M\geq 1$, which implies
\begin{align}\label{todo2}
M'\geq\frac{1+\beta'M'M}{\beta M}\geq (1-2P^{2\varepsilon-1})\beta^{-1}M^{-1}\geq \frac{1}{2}\beta^{-1}M^{-1}.
\end{align}
So we obtained that independently of the sign of $\beta'$ there holds either $\beta'M'\geq\beta M$ or $M'\geq \frac{1}{2}\beta^{-1}M^{-1}.$

Now, let $T_1<T_2<...<T_K$ be all the numbers $k$ from $\{1,2,...,P^{n-1}\}$ such that $\|yk\|\leq P^{\varepsilon-1}$. Then, since $\{y(T_{i+1}-T_i)\}\in(0,2P^{\varepsilon-1}]\cup[1-2P^{\varepsilon-1},1)$, by the argument above it follows that either $T_{i+1}-T_i\geq \frac{1}{2}\beta^{-1}M^{-1}$ or
\begin{align}\label{libo}
\{y(T_{i+1}-T_i)-(1-P^{\varepsilon-1})\}\geq \beta M.
\end{align}
But since $$\big(\lfloor P^{\varepsilon-1}\beta^{-1}M^{-1}\rfloor+1\big)\beta M>P^{\varepsilon-1},$$ there exists among the numbers $i=1,2,...,\lfloor P^{\varepsilon-1}\beta^{-1}M^{-1}\rfloor+1$ such a number $i$ that there holds $T_{i+1}-T_i\geq \frac{1}{2}\beta^{-1}M^{-1}$. Note also that \eqref{libo} implies the fact that among any $\lceil 2 P^{\varepsilon-1}\beta^{-1}M^{-1}\rceil+1$ consequitive values $i$ we can find such one that $T_{i+1}-T_i\geq \frac{1}{2}\beta^{-1}M^{-1}$.

Note that $\lfloor P^{\varepsilon-1}\beta^{-1}M^{-1}\rfloor M\leq P^{\varepsilon-1}P^{n-\varepsilon}=P^{n-1}$, hence, $$\mathfrak{T}:= \Big\{M,2M,...,\lfloor P^{\varepsilon-1}\beta^{-1}M^{-1}\rfloor M\Big\}\subset\{T_k\}.$$
Thus,
\begin{align*}
&P^{n-1}\geq T_K\geq \lfloor P^{\varepsilon-1}\beta^{-1}M^{-1}\rfloor M\\
&\qquad\qquad\qquad+\frac{K-\lfloor P^{\varepsilon-1}\beta^{-1}M^{-1}\rfloor}{\lceil 2 P^{\varepsilon-1}\beta^{-1}M^{-1}\rceil+1}\Big(\lceil 2 P^{\varepsilon -1}\beta^{-1}M^{-1}\rceil M+\frac{\beta^{-1}M^{-1}}{2}\Big)\\
&\qquad\qquad\qquad\qquad\qquad\qquad\geq \frac{K-\lfloor P^{\varepsilon-1}\beta^{-1}M^{-1}\rfloor}{3 P^{\varepsilon-1}\beta^{-1}M^{-1}}\frac{\beta^{-1}M^{-1}}{2},
\end{align*}
which yields
$$K-\lfloor P^{\varepsilon-1}\beta^{-1}M^{-1}\rfloor\leq 6P^{n-2+\varepsilon}.$$
Then, taking into account \eqref{lemma13} and Lemma \ref{lem1}, we establish
\begin{align*}
\#\Big\{(y_1,y_2,...,&y_{n-1})\in\{1,2,...,P\}^{n-1}:\|yy_1...y_{n-1}\|\leq P^{\varepsilon-1} \Big\}\\
= \#\Big\{(y_1,y_2,&...,y_{n-1})\in\{1,2,...,P\}^{n-1}:y_1...y_{n-1}\in\{T_k\}\Big\}\\
=\#\Big\{(y_1,&y_2,...,y_{n-1})\in\{1,2,...,P\}^{n-1}:y_1...y_{n-1}\in\{T_k\}\setminus\mathfrak{T}\Big\}\\
+\;\#\Big\{(&y_1,y_2,...,y_{n-1})\in\{1,2,...,P\}^{n-1}:y_1...y_{n-1}\in\mathfrak{T}\Big\}\\
\leq 6P^{n-2+\varepsilon}&C_{n-1}\Big(\frac{2\varepsilon}{n-1}\Big)(P^{n-1})^{\frac{2\varepsilon}{n-1}}\\
+\;\#\Big\{(y_1,y_2,&...,y_{n-1})\in\{1,2,...,P\}^{n-1}:\\
&\qquad\qquad\qquad y_1...y_{n-1}\in\big\{M,2M,...,\lfloor P^{\varepsilon-1}\beta^{-1}M^{-1}\rfloor M\big\}\Big\}\\
\leq 6P^{n-\frac{3}{2}}&C_{n-1}\Big(\frac{2\varepsilon}{n-1}\Big)\\
&\qquad +\;\#\Big\{(y_1,y_2,...,y_{n-1})\in\{1,2,...,P\}^{n-1}:y_1...y_{n-1}=M\Big\}\\
\cdot \;\#\Big\{(y_1,&y_2,...,y_{n-1})\in\{1,2,...,P\}^{n-1}:y_1...y_{n-1}\leq P^{\varepsilon-1}\beta^{-1}M^{-1}\Big\}\\
\leq &6P^{n-\frac{3}{2}}C_{n-1}\Big(\frac{2\varepsilon}{n-1}\Big)+C_{n-1}\Big(\frac{1}{2(n-1)}\Big)M^{\frac{1}{2(n-1)}}\frac{(n-1)P^{n-1}}{(\beta M P^{n-\varepsilon})^{\frac{1}{n-1}}}.
\end{align*}
So,
\begin{align*}
&\sum_{y_1,...,y_{n-1}=1}^P\min\bigg\{P,\frac{1}{2\|yy_1...y_{n-1}\|}\bigg\}\leq P \\
&\cdot \;\bigg(6P^{n-\frac{3}{2}}C_{n-1}\Big(\frac{2\varepsilon}{n-1}\Big)+(n-1)C_{n-1}\Big(\frac{1}{2(n-1)}\Big)P^{n-1-\frac{n-\varepsilon}{n-1}}\beta^{-\frac{1}{n-1}}M^{-\frac{1}{2(n-1)}}\bigg)\\
&\qquad\qquad\qquad\qquad\qquad+P^{n-1}\frac{1}{2 P^{\varepsilon-1}}\leq U\big(P^{n-\varepsilon}+P^{n-\frac{n-\varepsilon}{n-1}}\beta^{-\frac{1}{n-1}}M^{-\frac{1}{2(n-1)}}\big),
\end{align*}
where $U$ depends only on $n$ and $\varepsilon$.
\end{proof}

\begin{corollary}\label{cor2} Under the conditions of Lemma \ref{lm2}, for any real monic polynomial $f$ of degree $n$, there holds
\begin{align}\label{otbr}
\bigg|\sum_{k=1}^P e^{\frac{2\pi if(k)y}{n!}}\bigg|\leq D_1 \big(P^{1-\frac{\varepsilon}{2^{n-1}}}+P^{1-\frac{n-\varepsilon}{2^{n-1}(n-1)}}\beta^{-\frac{1}{2^{n-1}(n-1)}}M^{-\frac{1}{2^n(n-1)}}\big),
\end{align}
where $D_1$ depends only on $n$ and $\varepsilon$.
\end{corollary}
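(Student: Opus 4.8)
The plan is to repeat the chain of estimates from the proof of Corollary~\ref{cor1} almost verbatim, only feeding in the second bound of Lemma~\ref{lm2} in place of the one coming from Lemma~\ref{lm1}. First I would apply inequality~\eqref{fin_S_m} with the given monic polynomial $f$ of degree $n$ in the role of ``$f$'' and with $x=\frac{2\pi y}{n!}$; then $\alpha_n=1$, so that $\frac{n!y_1...y_{n-1}\alpha_n x}{2\pi}=yy_1...y_{n-1}$, and since $e^{if(k)x}=e^{2\pi if(k)y/n!}$ the right-hand side of~\eqref{fin_S_m} turns into
$$\Big|\sum_{k=1}^P e^{\frac{2\pi if(k)y}{n!}}\Big|^{2^{n-1}}\leq 2^{2^{n-1}}(n-1)P^{2^{n-1}-1}+2^{2^{n-1}+1}P^{2^{n-1}-n}\sum_{y_1,...,y_{n-1}=1}^{P}\min\Big\{P,\frac{1}{2\|yy_1...y_{n-1}\|}\Big\}.$$

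Next I would insert the second estimate of Lemma~\ref{lm2} (recall that there $|\beta|=\beta$ after the harmless normalization $\beta>0$), which, after multiplying through by $2^{2^{n-1}+1}P^{2^{n-1}-n}$ and using $2^{n-1}-n+(n-\varepsilon)=2^{n-1}-\varepsilon$ and $2^{n-1}-n+\big(n-\frac{n-\varepsilon}{n-1}\big)=2^{n-1}-\frac{n-\varepsilon}{n-1}$, gives
$$\Big|\sum_{k=1}^P e^{\frac{2\pi if(k)y}{n!}}\Big|^{2^{n-1}}\leq 2^{2^{n-1}}(n-1)P^{2^{n-1}-1}+2^{2^{n-1}+1}U\big(P^{2^{n-1}-\varepsilon}+P^{2^{n-1}-\frac{n-\varepsilon}{n-1}}\beta^{-\frac{1}{n-1}}M^{-\frac{1}{2(n-1)}}\big).$$
Since $\varepsilon<1$ we have $P^{2^{n-1}-1}\leq P^{2^{n-1}-\varepsilon}$, so the first summand is absorbed into the second and the whole right-hand side is at most $D_1'\big(P^{2^{n-1}-\varepsilon}+P^{2^{n-1}-\frac{n-\varepsilon}{n-1}}\beta^{-\frac{1}{n-1}}M^{-\frac{1}{2(n-1)}}\big)$ with $D_1'$ depending only on $n$ and $\varepsilon$, just as in~\eqref{an}.

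Finally I would take the $2^{n-1}$-th root, using $(a+b)^{1/s}\leq a^{1/s}+b^{1/s}$ for $s=2^{n-1}\geq 1$ together with the identities $\frac{1}{2^{n-1}}\cdot\frac{n-\varepsilon}{n-1}=\frac{n-\varepsilon}{2^{n-1}(n-1)}$, $\frac{1}{2^{n-1}}\cdot\frac{1}{n-1}=\frac{1}{2^{n-1}(n-1)}$ and $\frac{1}{2^{n-1}}\cdot\frac{1}{2(n-1)}=\frac{1}{2^{n}(n-1)}$; this yields exactly~\eqref{otbr} with $D_1=(D_1')^{1/2^{n-1}}$. There is no genuine obstacle in this corollary: all the arithmetic content sits in Lemma~\ref{lm2}, and the only things to be careful about are the bookkeeping of the exponents of $P$ and the absorption of the $P^{2^{n-1}-1}$ term, exactly as in the passage from~\eqref{fin_S_m} to~\eqref{an} in the proof of Corollary~\ref{cor1}.
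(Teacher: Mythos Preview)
Your proposal is correct and follows essentially the same route as the paper: apply \eqref{fin_S_m} with $x=2\pi y/n!$, plug in the second bound of Lemma~\ref{lm2}, absorb the $(n-1)P^{2^{n-1}-1}$ term into $P^{2^{n-1}-\varepsilon}$ via $\varepsilon<1$, and then extract the $2^{n-1}$-th root using subadditivity. If anything, your write-up is slightly more explicit than the paper's (which just says ``in the same way as~\eqref{an}'' and quotes the subadditivity inequality in the form $(a+b)^{1/(n-1)}\le a^{1/(n-1)}+b^{1/(n-1)}$).
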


\begin{proof} From Lemma \ref{lm2} and \eqref{fin_S_m} it follows in the same way as \eqref{an} that
\begin{align*}
&\qquad\bigg|\sum_{k=1}^P e^{\frac{2\pi if(k)y}{n!}}\bigg|^{2^{n-1}}\leq 2^{2^{n-1}}P^{2^{n-1}-1}(n-1)+\\
&+2^{2^{n-1}+1}P^{2^{n-1}-n}\big(UP^{n-\varepsilon}+UP^{n-\frac{n-\varepsilon}{n-1}}\beta^{-\frac{1}{n-1}}M^{-\frac{1}{2(n-1)}}\big)\\
&\qquad\qquad\qquad\qquad\qquad\leq D_1'P^{2^{n-1}-\varepsilon}+D_1'P^{2^{n-1}-\frac{n-\varepsilon}{n-1}}\beta^{-\frac{1}{n-1}}M^{-\frac{1}{2(n-1)}},
\end{align*}
therefore, in view of the inequality $(a+b)^{\frac{1}{n-1}}\leq a^{\frac{1}{n-1}}+b^{\frac{1}{n-1}}$, which is valid for any positive $a$ and $b$, we get the needed assertion with $D_1=(D_1')^{\frac{1}{2^{n-1}}}$. 
 
\end{proof}

\begin{lemma}\label{lem3} Let $0<y\in\mathbb{R}\setminus\mathbb{Q},\;\varepsilon\in(0,\frac{1}{6}),\;8\leq P\in\mathbb{N},\; 3\leq n\in\mathbb{N}$. If there exists such a pair of coprime natural numbers $C$ and $M\leq P^{\varepsilon}$ that
$$\Big|y-\frac{C}{M}\Big|=:|\beta|\leq P^{\varepsilon-n},$$
then there holds
$$\sum_{y_1,...,y_{n-1}=1}^P\min\bigg\{P,\frac{1}{2\|yy_1...y_{n-1}\|}\bigg\}\leq \frac{BP^n}{M^{\frac{1}{2(n-1)}}},$$
where $B$ depends only on $n$. Besides, in the case $P^n|\beta|\geq 2$, for any $\delta>0$ there holds
$$\sum_{y_1,...,y_{n-1}=1}^P\min\bigg\{P,\frac{1}{2\|yy_1...y_{n-1}\|}\bigg\}\leq \frac{P^{n-\varepsilon}}{2}+\frac{A_{\delta}}{M^{\frac{1}{2(n-1)}}|\beta|^{\frac{1}{n-1}-\delta}}P^{n-\frac{n}{n-1}+\delta n},$$
where $A_{\delta}$ depends only on $\delta$ and $n$.
\end{lemma}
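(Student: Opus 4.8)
The plan is to use the substitution $y=\frac{C}{M}+\beta$ to reduce the estimate to a counting problem for tuples whose product is divisible by $M$. For a tuple $(y_1,\dots,y_{n-1})\in\{1,\dots,P\}^{n-1}$ write $\lambda:=y_1\cdots y_{n-1}\in\{1,\dots,P^{n-1}\}$; from the hypotheses one has $|\beta|\lambda\le|\beta|P^{n-1}\le P^{\varepsilon-1}<\tfrac12$ and $M^{1/(2(n-1))}\le P^{\varepsilon}$. The decisive first step is the observation that $\|y\lambda\|\le P^{\varepsilon-1}$ forces $M\mid\lambda$: if $M\nmid\lambda$ then $\|C\lambda/M\|\ge 1/M$ since $(C,M)=1$, hence $\|y\lambda\|\ge\|C\lambda/M\|-|\beta|\lambda\ge\frac1M-P^{\varepsilon-1}>P^{\varepsilon-1}$, the last step using $M\le P^{\varepsilon}$, $\varepsilon<\frac16$, $P\ge 8$; and when $M\mid\lambda$ the integer part of $C\lambda/M$ disappears and $\|y\lambda\|=|\beta|\lambda$. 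I then split $S:=\sum_{y_1,\dots,y_{n-1}=1}^{P}\min\{P,\frac{1}{2\|y\lambda\|}\}$ according to whether $\|y\lambda\|>P^{\varepsilon-1}$ or $\|y\lambda\|\le P^{\varepsilon-1}$. In the first group each summand is $<\frac{1}{2P^{\varepsilon-1}}\le P$ and there are at most $P^{n-1}$ tuples, so this part is $<\frac{P^{n-\varepsilon}}2\le\frac12 P^{n}M^{-1/(2(n-1))}$.

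For the second group all tuples satisfy $M\mid\lambda$ and $\|y\lambda\|=|\beta|\lambda$, and the key auxiliary inequality is that for any exponent $s\ge 0$
\[
\sum_{\substack{(y_1,\dots,y_{n-1})\in\{1,\dots,P\}^{n-1}\\ M\mid y_1\cdots y_{n-1}}}(y_1\cdots y_{n-1})^{-s}\ \le\ \sum_{m_1\cdots m_{n-1}=M}M^{-s}\prod_{j=1}^{n-1}\sum_{z=1}^{\lfloor P/m_j\rfloor}z^{-s},
\]
which holds because $M\mid y_1\cdots y_{n-1}$ allows one to distribute the prime factors of $M$ so as to obtain a factorization $M=m_1\cdots m_{n-1}$ with $m_j\mid y_j$, and then $y_j=m_jz_j$ gives $y_1\cdots y_{n-1}=M\,z_1\cdots z_{n-1}$. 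With $s=0$ this reads $\#\{(y_j):M\mid\prod y_j\}\le M^{-1}P^{n-1}\tau_{n-1}(M)$, so bounding each summand of the second group by $P$ and using $\tau_{n-1}(M)\le C_{n-1}(\tfrac{1}{2(n-1)})M^{1/(2(n-1))}$ from \eqref{lemma13} (together with $M\ge1$, $n\ge3$) the second group is $\le C_{n-1}(\tfrac{1}{2(n-1)})P^{n}M^{1/(2(n-1))-1}\le C_{n-1}(\tfrac{1}{2(n-1)})P^{n}M^{-1/(2(n-1))}$; adding the two groups proves the first assertion.

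For the refined bound I keep track of the size of $\lambda$. Using $\min\{t,1\}\le t^{(n-2)/(n-1)}$ one gets $\min\{P,\frac{1}{2|\beta|\lambda}\}\le\frac{(2|\beta|P)^{(n-2)/(n-1)}}{2|\beta|}\lambda^{-1/(n-1)}$, so the auxiliary inequality with $s=\frac{1}{n-1}$, the elementary bound $\sum_{z\le N}z^{-1/(n-1)}\le\frac{n-1}{n-2}N^{(n-2)/(n-1)}$, and $\tau_{n-1}(M)\le C_{n-1}(\tfrac12)M^{1/2}$ show that the second group is $\le c_n M^{-1/(2(n-1))}|\beta|^{-1/(n-1)}P^{\,n-n/(n-1)}$ for a constant $c_n$ depending only on $n$ (the powers of $P$ combine because $\frac{n-2}{n-1}+n-2=n-\frac{n}{n-1}$, and $M^{-s}\cdot M^{-(1-s)}=M^{-1}$). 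Since $P^{n}|\beta|\ge2$ I finally multiply by $(P^{n}|\beta|)^{\delta}\ge1$, which turns this into $c_nM^{-1/(2(n-1))}|\beta|^{-(1/(n-1)-\delta)}P^{\,n-n/(n-1)+\delta n}$; together with the bound $\frac{P^{n-\varepsilon}}2$ for the first group this gives the second assertion, with $A_\delta=c_n$.

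I expect the main obstacle to be the decisive step — recognising that the only tuples contributing a genuinely large value are those with $M\mid y_1\cdots y_{n-1}$ — together with the factorization argument behind the displayed inequality, which is what converts a divisibility constraint on a product into a product of essentially unconstrained one-variable sums and thereby forces the power of $M$ to come out as exactly $M^{-1/(2(n-1))}$ once \eqref{lemma13} is applied. Everything afterwards is routine, the only care required being the repeated appeals to $M\le P^{\varepsilon}$, $\varepsilon<\frac16$, $P\ge8$, and (in the refined case) $P^{n}|\beta|\ge2$ to keep the elementary inequalities valid.
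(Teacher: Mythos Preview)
Your proof is correct, and it takes a genuinely different and in several respects cleaner route than the paper's own argument.

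The paper first establishes by a somewhat delicate induction that the integers $T\le P^{n-1}$ with $\|yT\|\le P^{\varepsilon-1}$ are \emph{exactly} the multiples $M,2M,\dots,KM$ with $K=\lfloor P^{\varepsilon-1}/(|\beta|M)\rfloor$; you bypass this by the one-line observation that $M\nmid\lambda$ forces $\|y\lambda\|\ge 1/M-|\beta|\lambda>P^{\varepsilon-1}$, which is all that is actually needed. For counting tuples with $M\mid y_1\cdots y_{n-1}$ both proofs rely on the same factorization idea (distribute the prime factors of $M$ among the $y_j$), but your version, packaged as the single weighted inequality valid for all $s\ge 0$, is more transparent and reusable. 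The sharpest difference is in the refined bound: the paper introduces level sets $F_t$, performs an Abel-type summation, and picks up a genuine factor $\ln(P^n|\beta|)$ that must then be absorbed as $(P^n|\beta|)^{\delta}$; your interpolation $\min\{P,\tfrac{1}{2|\beta|\lambda}\}\le c_nP^{(n-2)/(n-1)}|\beta|^{-1/(n-1)}\lambda^{-1/(n-1)}$ followed by the $s=\tfrac{1}{n-1}$ case of your auxiliary inequality avoids the logarithm entirely and already yields the estimate with $\delta=0$, after which multiplying by $(P^n|\beta|)^{\delta}\ge 1$ merely matches the stated form. So your argument is shorter, and in fact proves a slightly stronger inequality than the lemma claims.
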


\begin{proof}
Without loss of generality assume that $\beta>0$. Let us show that the minimal $T\in\mathbb{N}$ such that $\|yT\|\leq P^{\varepsilon-1}$ is $M$. Indeed, otherwise there exists $T<M$ such that $yT=Z+\gamma$, where $Z\in\mathbb{N}\cup\{0\}, \gamma\in (0,P^{\varepsilon-1}]\cup[1-P^{\varepsilon-1},1)$, and in this case
$$\frac{Z}{T}+\frac{\gamma}{T}=y=\frac{C}{M}+\beta.$$
On the other hand,
\begin{align}\label{todo3}
\frac{1}{P^{2\varepsilon}}\leq \frac{1}{M^2}\leq \frac{1}{TM}\leq\Big|\frac{C}{M}-\frac{Z}{T}\Big|=\Big|\beta-\frac{\gamma}{T}\Big|<\frac{2}{P^{1-\varepsilon}},
\end{align}
so $P^{1-3\varepsilon}<2$ which is false, since $P^{1-3\varepsilon}\geq \sqrt{P}> 2$. Let $T_1=M<T_2<T_3<...<T_K$ be all the natural numbers less than $P^{n-1}$ such that $\|yT_k\|\leq P^{\varepsilon-1}$. Let us show that $K=\lfloor P^{\varepsilon-1}\beta^{-1}M^{-1}\rfloor$ and that $T_k=kM$ for each $k\leq K$. We will prove the second part of the assertion by induction. The basis is the case $k=1$. Suppose that the assertion is proved for $k=l<K$ and let us prove it for $k=l+1$. Assume the contrary, $lM<T_{l+1}<(l+1)M$. Due to irrationality of $y$ we have $\{yT_{l+1}\}\neq\{ylM\}=l\beta$. The only two remaining cases are the following ones:

1) $l\beta<\{yT_{l+1}\}\leq P^{\varepsilon-1}$. But then $0<T_{l+1}-lM<M$ and $\{y(T_{l+1}-lM)\}\in(0,P^{\varepsilon-1}]$, which is not possible, since $T_1=M$.

2) $\{yT_{l+1}\}\in[1-P^{\varepsilon-1},1)$ or $\{yT_{l+1}\}<l\beta\leq P^{n-1}P^{\varepsilon-n}=P^{\varepsilon-1}$. Then $\{y(T_{l+1}-lM)\}\in(1-2P^{\varepsilon-1},1)$, hence,
\begin{align}\label{todo4}
\{y(T_{l+1}-lM)M\}>1-2MP^{\varepsilon-1}\geq 1-2P^{2\varepsilon-1}\geq 1-2P^{-2/3}>\frac{1}{2}.
\end{align}
On the other hand
\begin{align}\label{todo5}
\{yM(T_{l+1}-lM)\}\leq (T_{l+1}-lM)M\beta\leq M^2\beta\leq P^{3\varepsilon-n}<P^{-2}\leq \frac{1}{64}.
\end{align}
We have a contradiction. It is only left to show that $K=\lfloor P^{\varepsilon-1}\beta^{-1}M^{-1}\rfloor$. Note that for $l$ satisfying $\lfloor P^{\varepsilon-1}\beta^{-1}M^{-1}\rfloor+1\leq l\leq P^{n-1}$ there holds
$$\{ylM\}\leq \{yM\}l\leq P^{n-1}\beta M\leq P^{n-1-n+\varepsilon+\varepsilon}=P^{2\varepsilon-1}$$
and
$$\{ylM\}> P^{\varepsilon-1}\beta^{-1}M^{-1}\beta M=P^{\varepsilon-1},$$
i.e., $lM\neq T_k$ for any $k$. Suppose there exists such a natural number $T$ that $lM<T<(l+1)M$ for some $\lfloor P^{\varepsilon-1}\beta^{-1}M^{-1}\rfloor\leq l\leq P^{n-1}$ and $\|yT\|\leq P^{\varepsilon-1}$. But then $\{y(T-lM)\}\geq 1- 2P^{2\varepsilon-1}$, therefore, once more
\begin{align}\label{todo6}
\{y(T-lM)M\}>1-2MP^{2\varepsilon-1}\geq 1-2P^{3\varepsilon-1}\geq 1-2P^{-1/2}\geq 1-\frac{1}{\sqrt{2}}\geq\frac{1}{5}.
\end{align}
In contrast,
$$\{yM(T-lM)\}\leq (T-lM)M\beta\leq M^2\beta\leq P^{3\varepsilon-3}<P^{-2}\leq \frac{1}{64},$$
which leads to a contradiction. Thus, 
\begin{align}\label{K}
K=\lfloor P^{\varepsilon-1}\beta^{-1}M^{-1}\rfloor \qquad\text{and}\qquad T_k=kM \;\; \text{for all}\;\; k\leq K. 
\end{align}

Therefore,
\begin{align}\label{1sum}
\sum_{\underset{y_1...y_{n-1}\notin \{T_k\}}{y_1,...,y_{n-1}=1}}^P\min\bigg\{P,\frac{1}{2\|yy_1...y_{n-1}\|}\bigg\}\leq P^{n-1}\frac{1}{2 P^{\varepsilon-1}}\leq \frac{P^{n-\varepsilon}}{2}.
\end{align}

Note that using \eqref{lemma13} we have, for an arbitrary natural $l$,
\begin{align}\label{multiple}
\#\Big\{(y_1,y_2,...,&y_{n-1})\in \{1,2,...,P\}^{n-1}:y_1y_2...y_{n-1}\in\{M,2M,...,lM\}\Big\}\nonumber\\
\quad\quad\leq \#\Big\{(y_1,&y_2,...,y_{n-1})\in \{1,2,...,P\}^{n-1}:y_1y_2...y_{n-1}\leq l\Big\}\nonumber\\
\cdot \;\#\Big\{(y_1&,y_2,...,y_{n-1})\in \{1,2,...,P\}^{n-1}:y_1y_2...y_{n-1}=M\Big\}\nonumber\\
\leq &C_{n-1}\Big(\frac{1}{2(n-1)}\Big)M^{\frac{1}{2(n-1)}}\nonumber\\
&\cdot\;\#\Big\{(y_1,y_2,...,y_{n-1})\in \{1,2,...,P\}^{n-1}:y_1y_2...y_{n-1}\leq l\Big\}.
\end{align}

Now, according to \eqref{multiple} and Lemma \ref{lem1}, we can estimate
\begin{align}\label{3sum}
&\sum_{\underset{y_1...y_{n-1}\in\{T_k\}}{y_1,...,y_{n-1}=1}}^P\min\bigg\{P,\frac{1}{2\|yy_1...y_{n-1}\|}\bigg\}\leq \sum_{\underset{y_1...y_{n-1}\in\{T_k\}}{y_1,...,y_{n-1}=1}}^P P\leq P C_{n-1}\Big(\frac{1}{2(n-1)}\Big)\nonumber\\
&\cdot M^{\frac{1}{2(n-1)}}\#\bigg\{(y_1,y_2,...,y_{n-1})\in \{1,2,...,P\}^{n-1}:y_1y_2...y_{n-1}\leq \frac{P^{n-1}}{M}\bigg\}\nonumber\\
&\qquad\qquad\qquad\qquad\qquad\qquad\qquad\qquad\leq C_{n-1}\Big(\frac{1}{2(n-1)}\Big)\frac{(n-1)P^n}{M^{\frac{1}{2(n-1)}}}.
\end{align}
Combining estimates \eqref{1sum} and \eqref{3sum}, we derive
\begin{align*}
\sum_{y_1,...,y_{n-1}=1}^P\min\bigg\{P,\frac{1}{2\|yy_1...y_{n-1}\|}\bigg\}&\leq \frac{P^{n-\varepsilon}}{2}+C_{n-1}\Big(\frac{1}{2(n-1)}\Big)\frac{(n-1)P^n}{M^{\frac{1}{2(n-1)}}}\\
&<\frac{BP^n}{M^{\frac{1}{2(n-1)}}}.
\end{align*}

Consider now the case $P^n\beta\geq 2$. Let
$$F_t:=\bigg\{(y_1,...,y_{n-1})\in \{1,2,...,P\}^{n-1}:y_1...y_{n-1}\in\Big\{M,2M,...,\left\lfloor t(P\beta M)^{-1} \right\rfloor M\Big\}\bigg\}.$$
Note that for $t>P^n\beta$ there holds
$$\left\lfloor \frac{t}{P\beta M} \right\rfloor M > \frac{P^n\beta}{P\beta M}M=P^{n-1}\geq y_1y_2...y_{n-1},$$
and hence, for such $t$ we have $F_t=F_{\lfloor P^n\beta \rfloor}$. For any natural $t\leq P^n\beta$, due to Lemma \ref{lem1}, we get
\begin{align}\label{helplem1}
\#\bigg\{(y_1,...,y_{n-1})\in \{1,2,...,P\}^{n-1}:y_1...y_{n-1}\leq \frac{t}{P\beta M}\bigg\}\leq \frac{(n-1)P^{n-1}}{\big(\frac{P^n\beta M}{t}\big)^{\frac{1}{n-1}}}.
\end{align}
Summing up \eqref{multiple} and \eqref{helplem1}, we have
\begin{align}\label{Ft}
&|F_t|\leq C_{n-1}\Big(\frac{1}{2(n-1)}\Big)M^{\frac{1}{2(n-1)}}\frac{(n-1)P^{n-1}}{\big(\frac{P^n\beta M}{t}\big)^{\frac{1}{n-1}}}\nonumber\\
&\qquad\qquad\qquad\qquad\qquad\qquad=C_{n-1}\Big(\frac{1}{2(n-1)}\Big)\frac{(n-1)P^{n-1}}{\big(\frac{P^n\beta M^{1/2}}{t}\big)^{\frac{1}{n-1}}}.
\end{align}

If $y_1y_2...y_{n-1}\in\{T_k\}_{k=1}^K\setminus F_t$, then
\begin{align}\label{logest}
\frac{1}{2\|yy_1...y_{n-1}\|}\leq \frac{1}{2\beta t P^{-1}\beta^{-1}}= \frac{P}{2t}.
\end{align}
Using \eqref{logest}, we obtain
\begin{align*}
&S(P,y):=\sum_{\underset{y_1...y_{n-1}\in\{T_k\}}{y_1,...,y_{n-1}=1}}^P\min\bigg\{P,\frac{1}{2\|yy_1...y_{n-1}\|}\bigg\}\leq |F_2|\max p+\big(|F_3|-|F_2|\big)\frac{P}{2\cdot 2}\\
&\qquad\qquad\qquad+\big(|F_4|-|F_3|\big)\frac{P}{2\cdot 3}+...+\big(|F_{\lfloor P^n\beta \rfloor+1}|-|F_{\lfloor P^n\beta \rfloor}|\big)\frac{P}{2\cdot {\lfloor P^n\beta \rfloor}}.
\end{align*}
Noting that in the expression above any term $|F_i|$ appears with a nonnegative coefficient we derive with the help of \eqref{Ft}
\begin{align}\label{diffs}
&S(P,y)\leq C_{n-1}\Big(\frac{1}{2(n-1)}\Big)P(n-1)P^{n-1-\frac{n}{n-1}}\beta^{-\frac{1}{n-1}}M^{-\frac{1}{2(n-1)}}\\
&\cdot\bigg(2^{\frac{1}{n-1}}+\Big(3^{\frac{1}{n-1}}-2^{\frac{1}{n-1}}\Big)\frac{1}{2}+...+\Big(\big(\lfloor P^n\beta \rfloor +1\big)^{\frac{1}{n-1}}-\lfloor P^n\beta \rfloor^{\frac{1}{n-1}}\Big)\frac{1}{\lfloor P^n\beta\rfloor}\bigg).\nonumber
\end{align}
For any $a\geq 2$ by Lagrange's theorem we have for some $\theta\in(0,1)$
$$(a+1)^{\frac{1}{n-1}}-a^{\frac{1}{n-1}}=\frac{1}{n-1}(a+\theta)^{-\frac{n-2}{n-1}}\leq \frac{2}{n-1}a^{-\frac{n-2}{n-1}}\leq 1\leq 2^{\frac{1}{n-1}},$$
and hence, it follows from \eqref{diffs} that
\begin{align}\label{estdiffs}
S(P,y)\leq C_{n-1}\Big(\frac{1}{2(n-1)}\Big)P(n-1)P^{n-1-\frac{n}{n-1}}\beta^{-\frac{1}{n-1}}M^{-\frac{1}{2(n-1)}}2^{\frac{1}{n-1}}\ln(P^n\beta).
\end{align}
Since for any $\delta>0$ there exists such a number $B_{\delta}>0$ that $\ln(s+1)\leq B_{\delta} s^{\delta}$ for $s>0$, \eqref{estdiffs} implies
\begin{align}\label{2sum}
S(P,y)&\leq B_{\delta}C_{n-1}\Big(\frac{1}{2(n-1)}\Big)(n-1)P^{n-\frac{n}{n-1}+\delta n}\beta^{\delta-\frac{1}{n-1}}M^{-\frac{1}{2(n-1)}}2^{\frac{1}{n-1}}\nonumber\\
&=A_{\delta}P^{n-\frac{n}{n-1}+\delta n}\beta^{\delta-\frac{1}{n-1}}M^{-\frac{1}{2(n-1)}},
\end{align}
where $A_{\delta}$ is a constant depending only on $n$. Combining estimates \eqref{1sum} and \eqref{2sum}, we get the desired inequality for the case $P^n\beta\geq 2$. 
\end{proof}

\begin{corollary}\label{cor3} Under the conditions of Lemma \ref{lem3}, for any real monic polynomial $f$ of degree $n$, there holds
\begin{align}\label{after_sqr}
\bigg|\sum_{k=1}^P e^{\frac{2\pi if(k)y}{n!}}\bigg|\leq W\frac{P}{M^{\frac{1}{2^n(n-1)}}},
\end{align}
where $W$ depends only on $n$. Besides, if $P^n|\beta|\geq 2$, then we have
\begin{align}\label{sqr}
\bigg|\sum_{k=1}^P e^{\frac{2\pi if(k)y}{n!}}\bigg|\leq J\bigg(P^{1-\frac{\varepsilon}{2^{n-1}}}+\frac{P^{1-\frac{n}{2^n(n-1)}}}{(M|\beta|)^{\frac{1}{2^n(n-1)}}}\bigg),
\end{align}
where $J$ depends only on $n$.
\end{corollary}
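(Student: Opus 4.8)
The plan is to mimic the proof of Corollary~\ref{cor1}: substitute the estimate from Lemma~\ref{lem3} into \eqref{fin_S_m} with $n$ in place of the ambient degree, raise to the power $2^{n-1}$, and then extract the $2^{n-1}$-th root. First I would write, using \eqref{fin_S_m} applied to the monic polynomial $f$ (so that $\alpha_n=1$ and the argument of the exponent is $\frac{2\pi f(k)y}{n!}$, which makes the inner exponentials exactly $e^{i n! y\,y_1\cdots y_{n-1}\cdot\frac{2\pi}{n!}\,\cdot(\dots)}$ with the $n!$'s cancelling, so that the $\|\cdot\|$ appearing is precisely $\|y\,y_1\cdots y_{n-1}\|$ as in Lemma~\ref{lem3}), the bound
\begin{align*}
\bigg|\sum_{k=1}^P e^{\frac{2\pi if(k)y}{n!}}\bigg|^{2^{n-1}}\leq 2^{2^{n-1}}P^{2^{n-1}-1}(n-1)+2^{2^{n-1}+1}P^{2^{n-1}-n}\sum_{y_1,\dots,y_{n-1}=1}^P\min\Big\{P,\frac{1}{2\|y\,y_1\cdots y_{n-1}\|}\Big\}.
\end{align*}

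For the first assertion \eqref{after_sqr}, I would plug in the bound $\sum\min\{\dots\}\leq B P^n M^{-1/(2(n-1))}$ from Lemma~\ref{lem3}. The second term then contributes $2^{2^{n-1}+1}B\,P^{2^{n-1}}M^{-1/(2(n-1))}$, which dominates the first term $2^{2^{n-1}}(n-1)P^{2^{n-1}-1}$ since $M\leq P^{\varepsilon}\leq P$ forces $M^{-1/(2(n-1))}\geq P^{-1}$; hence the whole right-hand side is $\leq W'\,P^{2^{n-1}}M^{-1/(2(n-1))}$ with $W'$ depending only on $n$. Taking $2^{n-1}$-th roots gives $\big|\sum_{k=1}^P e^{2\pi i f(k)y/n!}\big|\leq (W')^{1/2^{n-1}}P\,M^{-1/(2^n(n-1))}=W\,P\,M^{-1/(2^n(n-1))}$, as claimed.

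For \eqref{sqr}, in the case $P^n|\beta|\geq 2$ I would instead use the second bound of Lemma~\ref{lem3}, namely $\sum\min\{\dots\}\leq \frac12 P^{n-\varepsilon}+A_\delta\,M^{-1/(2(n-1))}|\beta|^{-(1/(n-1)-\delta)}P^{n-n/(n-1)+\delta n}$ for any $\delta>0$; I would fix $\delta$ to be a small fixed multiple of $1/n^2$ (any choice with $\delta n<\frac{1}{2(n-1)}$ and $1/(n-1)-\delta\geq \frac{1}{2(n-1)}$ works, e.g.\ $\delta=\frac{1}{4(n-1)n}$) so that both exponent losses are comparable to $n/(n-1)$ up to the constant $2^n$ appearing after the root, and so that $|\beta|^{\delta-1/(n-1)}\leq |\beta|^{-1/(2(n-1))}$ (legitimate since $|\beta|\geq 2 P^{-n}<1$ and the exponent is negative and $\leq -1/(2(n-1))$). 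Substituting into the displayed inequality above, the first term and the $\frac12 P^{n-\varepsilon}$-term together give $\lesssim P^{2^{n-1}-\varepsilon}$, while the $A_\delta$-term gives $\lesssim P^{2^{n-1}-n/(n-1)+\delta n}M^{-1/(2(n-1))}|\beta|^{-1/(2(n-1))}\lesssim P^{2^{n-1}-n/(2^{n-1}(n-1))\cdot 2^{n-1}}(M|\beta|)^{-1/(2(n-1))}$ once one checks $\delta n$ is absorbed; so the right-hand side is $\leq J'\big(P^{2^{n-1}-\varepsilon}+P^{2^{n-1}-n/(n-1)+\delta n}(M|\beta|)^{-1/(2(n-1))}\big)$. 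Using $(a+b)^{1/2^{n-1}}\leq a^{1/2^{n-1}}+b^{1/2^{n-1}}$ for positive $a,b$ and taking $2^{n-1}$-th roots yields $\big|\sum_{k=1}^P e^{2\pi i f(k)y/n!}\big|\leq J\big(P^{1-\varepsilon/2^{n-1}}+P^{1-n/(2^n(n-1))}(M|\beta|)^{-1/(2^n(n-1))}\big)$, the required bound.

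The only genuinely delicate point is the bookkeeping of exponents in the second part: one must verify that the choice of $\delta$ makes the loss $\delta n$ in the power of $P$ small enough that after dividing by $2^{n-1}$ it does not spoil the claimed exponent $1-\frac{n}{2^n(n-1)}$, and simultaneously that $\delta-\frac{1}{n-1}\leq -\frac{1}{2(n-1)}$ so that the negative power of $|\beta|$ can be combined with the $M$-power into $(M|\beta|)^{-1/(2^n(n-1))}$. Everything else is the routine substitute-and-root procedure already carried out in Corollaries~\ref{cor1} and~\ref{cor2}; I would state explicitly only the choice of $\delta$ and the two inequalities it must satisfy, and leave the rest as ``in the same way as \eqref{an}''.
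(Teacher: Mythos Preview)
Your argument for \eqref{after_sqr} is correct and identical to the paper's.

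For \eqref{sqr}, your overall plan is right, but one step is wrong as written. You claim that $\delta-\frac{1}{n-1}\leq -\frac{1}{2(n-1)}$ gives $|\beta|^{\delta-1/(n-1)}\leq |\beta|^{-1/(2(n-1))}$. Since $|\beta|<1$, the inequality actually goes the \emph{other} way: a more negative exponent makes the power larger. So you cannot bound the $|\beta|$-factor and the $P$-factor separately against their targets. (Your approach can still be rescued: with any $\delta<\frac{1}{2(n-1)}$ the surplus in the $P$-exponent exactly matches the deficit in the $|\beta|$-exponent, and one checks $P^{n(\delta-1/(2(n-1)))}|\beta|^{\delta-1/(2(n-1)))}=(P^n|\beta|)^{\delta-1/(2(n-1))}\leq 1$ because $P^n|\beta|\geq 2$ and the exponent is negative. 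But this is not what you wrote.)

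The paper sidesteps the whole issue by taking $\delta=\frac{1}{2(n-1)}$ exactly. With this choice one has $\frac{1}{n-1}-\delta=\frac{1}{2(n-1)}$ and $n-\frac{n}{n-1}+\delta n=n-\frac{n}{2(n-1)}$, so after multiplying by $P^{2^{n-1}-n}$ the second term of Lemma~\ref{lem3} becomes a constant times $P^{2^{n-1}-\frac{n}{2(n-1)}}(M|\beta|)^{-\frac{1}{2(n-1)}}$ on the nose---no inequality between powers of $|\beta|$ is needed. Taking the $2^{n-1}$-th root via $(a+b)^{1/2^{n-1}}\leq a^{1/2^{n-1}}+b^{1/2^{n-1}}$ then gives \eqref{sqr} directly. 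I would replace your ``small $\delta$'' discussion by this exact choice; it is shorter and avoids the sign slip.
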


\begin{proof}

By Lemma \ref{lem3} and \eqref{fin_S_m}, we derive
\begin{align*}
\bigg|\sum_{k=1}^P e^{\frac{2\pi if(k)y}{n!}}\bigg|^{2^{n-1}}\leq 2^{2^{n-1}}P^{2^{n-1}-n}\bigg((n-1)P^{n-1}+2\frac{BP^n}{M^{\frac{1}{2(n-1)}}}\bigg),
\end{align*}
which implies \eqref{after_sqr}.

If $P^n|\beta|\geq 2$, then, according to \eqref{fin_S_m} and Lemma \ref{lem3},
\begin{align}\label{2}
\bigg|\sum_{k=1}^P e^{\frac{2\pi if(k)y}{n!}}\bigg|^{2^{n-1}}&\leq 2^{2^{n-1}}P^{2^{n-1}-1}(n-1)\\
&+2^{2^{n-1}+1}P^{2^{n-1}-n}\bigg(\frac{P^{n-\varepsilon}}{2}+\frac{A_{\frac{1}{2(n-1)}}}{M^{\frac{1}{2(n-1)}}|\beta|^{\frac{1}{2(n-1)}}}P^{n-\frac{n}{2(n-1)}}\bigg).\nonumber
\end{align}
Using the inequality $(a+b)^{\frac{1}{n-1}}\leq a^{\frac{1}{n-1}}+b^{\frac{1}{n-1}}$, $a,b>0$, and \eqref{2} we get \eqref{sqr}.
\end{proof}

\begin{remark}\label{imp} Lemmas \ref{lm2} and \ref{lem3}, as well as Corollaries \ref{cor2} and \ref{cor3}, remain true for $\varepsilon\in(0,\frac{1}{3})$ for large enough $P$.
\end{remark}

\begin{proof}
There are only the following estimates which need apparent changes: \eqref{todo1}, \eqref{todo2}, \eqref{todo3}, \eqref{todo4}, \eqref{todo5} and \eqref{todo6}.
\end{proof}

\section{The case of a natural power}

Consider the case $\alpha=:n\in\mathbb{N}$. Here we have two fundamentally different situations: when $n$ is even and when $n$ is odd. We start with the even case.

\begin{proof}[Proof of Theorem \ref{thm} (a)] 
Note that $k^2\equiv 0\;(\!\!\!\!\mod 4)$ for any even $k$ and $k^2\equiv 1\;(\!\!\!\!\mod 4)$ for any odd $k$. Therefore, for any $l<L$,
\begin{align}\label{00}
\sum_{k=2l+1}^{2L}c_k\sin k^{n} \frac{\pi}{2}=\sum_{k=1}^{L-l}c_{2l+2k-1}\geq \frac{1}{2}\sum_{k=2l+1}^{2L}c_k,
\end{align}
which completes the proof. 

A similar argument is valid for the point $\frac{2\pi}{3}$, due to the fact that $k^2\equiv 1\;(\!\!\!\!\mod 3)$ for any $k$ not divisible by $3$.
\end{proof}  

\vspace{4pt}

Let us turn now to the case of an odd power $n$ and series \eqref{series2}. This case differs from the previous one by the fact that for any odd $l$ and any point of the form $x=2\pi\frac{a}{b}\in 2\pi\mathbb{Q}$ there holds $\sum_{k=1}^b\sin k^l x=0$. Therefore, there is no $``$accomulation$"$ which we saw in the case of an even $l$. We will see that due to this property we manage to get suitable estimates of the corresponding image part of Weyl sums for points close enough to $\pi$-rationals. For other points we obtain effective estimates provided by Section \ref{Weyl}, and these estimates are still valid if we replace $f$ by any polynomial of the same degree.

\begin{proof}[Proof of Theorem \ref{thm} (b)]
Let $c_kk\to 0$. We fix some $\varepsilon\in(0,\frac{1}{6})$ and $x\in\mathbb{R}\setminus \pi\mathbb{Q}$. Since we are going to prove the assertion for all $x\in\mathbb{R}$, we can assume that the polynomial $f$ is monic. Without loss of generality we consider $x>0$. Denote
$$\mathfrak{M}=\mathfrak{M}_x:=\bigg\{M\in\mathbb{N}:\exists C_M\in\mathbb{N}\;\;\text{such that}\;\;(C_M,M)=1\;\;\text{and}\;\;\Big|\frac{n!x}{2\pi}-\frac{C_M}{M}\Big|\leq M^{\frac{\varepsilon-1}{\varepsilon}}\bigg\}.$$
Notice that $\mathfrak{M}$ is a finite or infinite increasing sequence of natural numbers $\{M_i\}_{i\geq 1}$, and there holds
\begin{align}\label{M_i}
2M_{i+1}\geq M_i^{4}
\end{align} 
for any $i\geq 1$. Indeed,
\begin{align*}
\frac{1}{M_iM_{i+1}}\leq\Big|\frac{C_{M_i}}{M_i}-\frac{C_{M_{i+1}}}{M_{i+1}}\Big|\leq \Big|\frac{n!x}{2\pi}-\frac{C_{M_i}}{M_i}\Big|&+\Big|\frac{n!x}{2\pi}-\frac{C_{M_{i+1}}}{M_{i+1}}\Big|\\
&\leq M_i^{\frac{\varepsilon-1}{\varepsilon}}+M_{i+1}^{\frac{\varepsilon-1}{\varepsilon}}\leq 2M_i^{\frac{\varepsilon-1}{\varepsilon}},
\end{align*}
therefore,
$$2M_{i+1}\geq M_i^{\frac{1-\varepsilon}{\varepsilon}-1}\geq M_i^{\frac{1-1/6}{1/6}-1}=M_i^4.$$
We call a natural number $m$ {\it inconvinient} if there exists a pair of coprime natural numbers $C$ and $M\leq m^{\varepsilon}$ such that 
$$\Big|\frac{n!x}{2\pi}-\frac{C}{M}\Big|\leq m^{\varepsilon-n}.$$
Otherwise, if there exists a pair of coprime natural numbers $C$ and $M\leq m^{\varepsilon}$ such that
$$\Big|\frac{n!x}{2\pi}-\frac{C}{M}\Big|\leq m^{\varepsilon-1},$$
we say that $m$ is {\it almost convenient}. In other cases we call $m$ {\it convenient}. Let
\begin{align*}
S_m(x):=\Im\sum_{k=1}^m e^{i f(k)x}=\sum_{k=1}^m\sin f(k)x.
\end{align*}

For any natural $l<L$, using the Abel transformation we have
\begin{align}\label{simple_sum}
\bigg|\sum_{m=l}^L c_m\sin f(m)x\bigg|\leq c_l l&+c_{L+1}L+\bigg|\sum_{m=l}^{L}(c_m-c_{m+1})S_m(x)\bigg|\qquad\qquad\qquad\nonumber\\
&\leq 2\sup_{k\geq l}c_k k+\sum_{m=l}^{L}(c_m-c_{m+1})|S_m(x)|.
\end{align}

From now on we assume $l\geq 9$ in order to apply estimates from Section \ref{Weyl}. With the help of Corollaries \ref{cor1} and \ref{cor2} we will estimate $S_m(x)$ for convenient and almost convenient $m$, taking into account that for these cases $\sin\frac{n!y_1...y_{n-1}x}{2\pi}$, roughly speaking, rarely assumes values close to zero. For such $m$ we do not use oddness of $f$ and the fact that the coefficients of $f$ are rational.

For convenient $m$, using Corollary \ref{cor1} we get
\begin{align}\label{udobn}
&\sum_{\underset{\text{convenient}}{m\geq l}}(c_m-c_{m+1})|S_m(x)|\leq D\sum_{m\geq l}(c_m-c_{m+1})m^{1-\frac{\varepsilon}{2^n}}\leq Dc_l l^{1-\frac{\varepsilon}{2^n}}\nonumber\\
&\quad+2D\sum_{m\geq l}c_m m^{-\frac{\varepsilon}{2^n}}\leq D\Big(1+2\sum_{m\geq l} m^{-1-\frac{\varepsilon}{2^n}}\Big)\sup_{k\geq l}c_k k\nonumber\\
&\qquad\leq D\Big(1+\frac{2^{n+1}}{\varepsilon}\Big)\sup_{k\geq l}c_k k\leq D\frac{2^{n+2}}{\varepsilon}\sup_{k\geq l}c_k k=:D''\sup_{k\geq l}c_k k.
\end{align}

Note that there is a natural number $M=M(m)\leq m^{\varepsilon},\;M\in\mathfrak{M}$, kept for any almost convenient $m$, and all the numbers $m$ for which a certain $M$ is kept must satisfy the condition 
\begin{align}\label{proneoch}
\Big|\frac{n!x}{2\pi}-\frac{C}{M}\Big|^{-\frac{1}{n-\varepsilon}}=:\beta^{-\frac{1}{n-\varepsilon}}\leq m\leq\beta^{-\frac{1}{1-\varepsilon}}.
\end{align}
For almost convenient $m$, according to \eqref{proneoch} and Corollary \ref{cor2}, we have
\begin{align*}
&\sum_{\underset{\text{almost convenient}}{m\geq l}}(c_m-c_{m+1})|S_m(x)|\leq \sum_{M\in \mathfrak{M}}\sum_{\underset{\text{almost convenient}}{m\geq l,\;M=M(m)}}(c_m-c_{m+1})|S_m(x)|\\
&\leq D_1\sum_{m\geq l}(c_m-c_{m+1})m^{1-\frac{\varepsilon}{2^{n-1}}}\\
&\qquad+D_1\beta^{-\frac{1}{2^{n-1}(n-1)}}\sum_{M\in \mathfrak{M}}M^{-\frac{1}{2^n(n-1)}} \sum_{\underset{m=\lceil\beta^{-\frac{1}{n-\varepsilon}}\rceil}{m\geq l}}^{\lfloor\beta^{-\frac{1}{1-\varepsilon}}\rfloor}(c_m-c_{m+1})m^{1-\frac{n-\varepsilon}{2^{n-1}(n-1)}}.
\end{align*}
We estimate the first sum as in \eqref{udobn}, so
\begin{align}\label{taksebe}
&\sum_{\underset{\text{almost convenient}}{m\geq l}}(c_m-c_{m+1})|S_m(x)|\leq D''\frac{D_1}{D}\sup_{k\geq l}c_k k\nonumber\\
&+2D_1\beta^{-\frac{1}{2^{n-1}(n-1)}}\sum_{M\in \mathfrak{M}}M^{-\frac{1}{2^n(n-1)}} \sum_{\underset{m=\lceil\beta^{-\frac{1}{n-\varepsilon}}\rceil}{m\geq l}}^{\lfloor\beta^{-\frac{1}{1-\varepsilon}}\rfloor}c_m m^{-\frac{n-\varepsilon}{2^{n-1}(n-1)}}\leq D''\frac{D_1}{D}\sup_{k\geq l}c_k k\nonumber\\
&+2D_1\beta^{-\frac{1}{2^{n-1}(n-1)}}\sum_{M\in \mathfrak{M}}M^{-\frac{1}{2^n(n-1)}} \frac{2^{n-1}(n-1)}{n-\varepsilon}\big(\lceil\beta^{-\frac{1}{n-\varepsilon}}\rceil\big)^{-\frac{n-\varepsilon}{2^{n-1}(n-1)}}\sup_{k\geq l}c_k k\nonumber\\
&\qquad\qquad\qquad\qquad\qquad\qquad\qquad\qquad\qquad\qquad\qquad\qquad\leq D_1''\sup_{k\geq l}c_k k,
\end{align}
where $D_1''$ depends only on $n$ and $\varepsilon$ due to \eqref{M_i}.

Turn now to inconvenient numbers $m$. In this case $x$ is approximated by a $\pi$-rational fraction too well, therefore, the difference between $S_m$ at the point $x$ and $S_m$ at the close $\pi$-rational point is sufficiently small. For some $m$ we will use it, and this will be the unique case when we care about the fact that we have to estimate only the imaginary part of the Weyl sum, the fact that $f$ is an odd function and that the coefficients of $f$ are rational. For other $m$ we will proceed according one of the two inequalities from Corollary \ref{cor3}, and these estimates will remain true for the whole Weyl sums and for any monic polynomial of the same degree.

There is also a number $M\leq m^{\varepsilon},\;M\in\mathfrak{M}$, kept for any inconvenient number $m$. A fixed $M\in\mathfrak{M}$ is kept for all natural numbers $m$ such that $\beta^{-\frac{1}{n-\varepsilon}}=\beta_M^{-\frac{1}{n-\varepsilon}}=\left|\frac{n!x}{2\pi}-\frac{C}{M}\right|^{-\frac{1}{n-\varepsilon}}\geq m\geq M^{\frac{1}{\varepsilon}}$, where $C=C_M$, and only for these numbers. Denote $m_1:=\lceil M^{\frac{1}{\varepsilon}}\rceil$ and $m_2:=\big\lfloor \beta^{-\frac{1}{n-\varepsilon}}\big\rfloor$ (for a fixed $M$). Let $K$ be such a natural number that 
\begin{align}\label{beta}
\frac{1}{K^n\ln (2M)}\leq\beta\leq\frac{1}{(K-1)^n \ln (2M)}.
\end{align}

We divide the interval $m_1\leq m\leq m_2$ into three intervals: $m_1\leq m\leq K-1,\;K\leq m\leq \lfloor 2K\ln (2M) \rfloor$ and $\lfloor 2K\ln (2M) \rfloor +1\leq m\leq m_2$. For $m$ belonging to the second or to the third one, we will estimate $S_m(x)$ with the help of \eqref{after_sqr} and \eqref{sqr}, respectively. And it is only on the interval $m_1\leq m\leq K-1$ where we will need for estimating $S_m(x)$ the properties of the polynomial $f$ and the fact that we deal with sines and not with cosines. 

Let $Q\in\mathbb{N}$ be the minimal number such that $Qf\in\mathbb{Z}[x]$. Note that for any odd $l$, if numbers $k_1$ and $k_2$ are such that $k_1\equiv -k_2\;(\!\!\!\!\mod QMn!)$, then $\sin \big(k_1^l \frac{2\pi C}{QMn!}\big)+\sin\big(k_2^l \frac{2\pi C}{QMn!}\big)=0$. Hence, for any $g\in \mathbb{Z}$, there holds $$\sum_{k=g+1}^{g+QMn!}\sin  \Big(f(k)\frac{2\pi C}{Mn!}\Big)=0,$$ which implies
\begin{align}\label{eps_4}
|S_m(x)|&\leq \Big|S_m\Big(\frac{2\pi C}{Mn!}\Big)\Big|+\Big|S_m(x)-S_m\Big(\frac{2\pi C}{Mn!}\Big)\Big|\leq \bigg|\sum_{k=1}^m\sin \Big(f(k)\frac{2\pi C}{Mn!}\Big) \bigg|\nonumber\\
&+\frac{2\pi}{n!}\sum_{k=1}^m |f(k)|\beta\leq \Big\{\frac{m}{QMn!}\Big\}QMn!+2C_f\beta\sum_{k=1}^m k^n \nonumber\\
&\qquad\;\;\quad\quad\quad\quad\quad\leq QMn!+C_fm^{n+1}\beta\leq Qm^{\varepsilon}n!+C_fm^{n+1}\beta,
\end{align}
where $C_f$ stands for the sum of the absolute values of the coefficients of $f$. From \eqref{beta} and \eqref{eps_4} we have
\begin{align}\label{sum_1_1}
\sum_{m=m_1}^{K-1}(c_m-c_{m+1})|S_m(x)|&\leq Qn!\sum_{m=m_1}^{K-1}(c_m-c_{m+1})m^{\varepsilon}+C_f\beta\sum_{m=m_1}^{K-1}(c_m-c_{m+1})m^{n+1}\nonumber\\
\leq Qn!c_{m_1} m_1^{\varepsilon}&+Qn!\sum_{m=m_1+1}^{K-1}c_m \varepsilon (m-\theta_m)^{\varepsilon-1}\nonumber\\
&+\frac{C_f}{(K-1)^n\ln (2M)}\sum_{m=m_1}^{K-1}(c_m-c_{m+1})m^{n+1},
\end{align}
where $\theta_m\in(0,1)$ by Lagrange's theorem. Taking into account that for any $z\geq 2$ there is the inequality $(z-1)^{\varepsilon-1}\leq 2z^{\varepsilon-1}$, it follows from \eqref{sum_1_1} that
\begin{align}\label{sum_1_2}
&\sum_{m=m_1}^{K-1}(c_m-c_{m+1})|S_m(x)|\leq Qn!m_1^{\varepsilon-1}\sup_{k\geq l}c_k k+2\varepsilon Qn!\sum_{m=m_1+1}^{K-1}c_m m^{\varepsilon-1}\nonumber\\
&+C_f\frac{c_{m_1} m_1}{\ln (2M)}+C_f\frac{n+1}{(K-1)^n\ln (2M)}\sum_{m=m_1+1}^{K-1}c_m m^n\leq \sup_{k\geq l}c_k k\nonumber\\
&\cdot\bigg(Qn!m_1^{\varepsilon-1}+2\varepsilon Qn!\sum_{m=m_1+1}^{K-1}m^{\varepsilon-2}+\frac{C_f}{\ln (2M)}+\frac{(n+1)C_f}{(K-1)^n\ln (2M)}\sum_{m=m_1+1}^{K-1}m^{n-1}\bigg)\nonumber\\
&\leq \left(Qn!\Big(m_1^{\varepsilon-1}+\frac{2\varepsilon}{1-\varepsilon}m_1^{\varepsilon-1}\Big)+\frac{C_f}{\ln (2M)}+\frac{(n+1)C_f}{(K-1)^n\ln (2M)}\frac{K^n}{n}\right)\sup_{k\geq l}c_k k,
\end{align}
from which, due to the fact that $m_1^{\varepsilon}\geq M$ and that $\frac{(n+1)K^n}{n(K-1)^n}\leq 2^{n+1}$, we finally derive
\begin{align}\label{sum_1}
\sum_{m=m_1}^{K-1}(c_m-c_{m+1})|S_m(x)|\leq A\Big(M^{\frac{\varepsilon-1}{\varepsilon}}+\frac{1}{\ln (2M)}\Big)\sup_{k\geq l}c_k k,
\end{align}
where $A>0$ depends only on $n,\varepsilon$ and $f$.
\\

For $2K\ln (2M)\leq m\leq m_2$, we have

$$m^n\beta\geq \big(2K\ln (2M)\big)^n\big(K^n\ln (2M)\big)^{-1}=2^n\ln^{n-1}(2M)\geq 2(2\ln 2)^{n-1}\geq 2.$$
Hence, using \eqref{sqr},
\begin{align}\label{sum_3_1}
&\sum_{m=\lfloor 2K\ln (2M) \rfloor +1}^{m_2}(c_m-c_{m+1})|S_m(x)| \nonumber\\
&\qquad\leq J\sum_{m=\lfloor 2K\ln (2M) \rfloor +1}^{m_2}(c_m-c_{m+1})\bigg(m^{1-\frac{\varepsilon}{2^{n-1}}}+\frac{m^{1-\frac{n}{2^n(n-1)}}}{(M\beta)^{\frac{1}{2^n(n-1)}}}\bigg).
\end{align}
First, we estimate
\begin{align}\label{sum_3_2}
&\sum_{m=\lfloor 2K\ln (2M) \rfloor +1}^{m_2}(c_m-c_{m+1})m^{1-\frac{\varepsilon}{2^{n-1}}} \nonumber\\
&\leq\frac{c_{\lfloor 2K\ln (2M) \rfloor +1}\big(\lfloor 2K\ln (2M) \rfloor +1\big)}{\big(\lfloor 2K\ln (2M) \rfloor +1\big)^{\frac{\varepsilon}{2^{n-1}}}}+2\sum_{m=\lfloor 2K\ln (2M) \rfloor +2}^{m_2}c_m m^{-\frac{\varepsilon}{2^{n-1}}}\nonumber\\
&\qquad\leq\bigg(m_1^{-\frac{\varepsilon}{2^{n-1}}}+2\sum_{m=\lfloor 2K\ln (2M) \rfloor +2}^{m_2}m^{-1-\frac{\varepsilon}{2^{n-1}}}\bigg)\sup_{k\geq l}c_k k\nonumber\\
&\qquad\qquad\leq \Big(M^{-\frac{1}{2^{n-1}}}+\frac{2^n}{\varepsilon}m_1^{-\frac{\varepsilon}{2^{n-1}}}\Big)\sup_{k\geq l}c_k k\leq\frac{2^{n+1}}{\varepsilon}M^{-\frac{1}{2^{n-1}}}\sup_{k\geq l}c_k k.
\end{align}
Further, in view of \eqref{beta},
\begin{align}\label{sum_3_3}
&\sum_{m=\lfloor 2K\ln (2M) \rfloor +1}^{m_2}(c_m-c_{m+1})\frac{m^{1-\frac{n}{2^n(n-1)}}}{(M\beta)^{\frac{1}{2^n(n-1)}}}\nonumber\\
&\leq \sum_{m=\lfloor 2K\ln (2M) \rfloor +1}^{m_2}(c_m-c_{m+1})\frac{m^{1-\frac{n}{2^n(n-1)}}K^{\frac{n}{2^n(n-1)}}\big(\ln (2M)\big)^{\frac{1}{2^n(n-1)}}}{M^{\frac{1}{2^n(n-1)}}}\nonumber\\
&\qquad\leq \Big(\frac{\ln (2M)}{M}\Big)^{\frac{1}{2^n(n-1)}}c_{\lfloor 2K\ln (2M) \rfloor +1}\big(\lfloor 2K\ln (2M) \rfloor +1\big)\nonumber\\
&\qquad\qquad+2\Big(\frac{\ln (2M)}{M}\Big)^{\frac{1}{2^n(n-1)}}\sum_{m=\lfloor 2K\ln (2M) \rfloor +2}^{m_2}c_m m^{-\frac{n}{2^n(n-1)}}K^{\frac{n}{2^n(n-1)}}\nonumber\\
&\leq \Big(\frac{\ln (2M)}{M}\Big)^{\frac{1}{2^n(n-1)}}\Big(1+2K^{\frac{n}{2^n(n-1)}}\sum_{m=\lfloor 2K\ln (2M) \rfloor +2}^{m_2}m^{-1-\frac{n}{2^n(n-1)}}\Big)\sup_{k\geq l}c_k k\nonumber\\
&\leq \Big(\frac{\ln (2M)}{M}\Big)^{\frac{1}{2^n(n-1)}}\bigg(1+2K^{\frac{n}{2^n(n-1)}}\Big(\frac{2^n(n-1)}{n}\Big)\big(\lfloor 2K\ln (2M) \rfloor\big)^{-\frac{n}{2^n(n-1)}}\bigg)\sup_{k\geq l}c_k k\nonumber\\
&\qquad\qquad\qquad\qquad\qquad\leq H\Big(\frac{\ln (2M)}{M}\Big)^{\frac{1}{2^n(n-1)}}\sup_{k\geq l}c_k k,
\end{align}
where $H>0$ depends only on $n$. Thus, from \eqref{sum_3_1}, \eqref{sum_3_2} and \eqref{sum_3_3} we have
\begin{align}\label{sum_3}
\sum_{m=\lfloor 2K\ln (2M) \rfloor +1}^{m_2}(c_m-c_{m+1})|S_m(x)|\leq H'\Big(\frac{\ln (2M)}{M}\Big)^{\frac{1}{2^n(n-1)}}\sup_{k\geq l}c_k k,
\end{align}
where $H'>0$ also depends only on $n$ and $f$.

For $K\leq m\leq 2K\ln (2M)$, according to \eqref{after_sqr},

\begin{align}\label{sum_2}
&\sum_{m=K}^{\lfloor 2K\ln (2M)\rfloor}(c_m-c_{m+1})|S_m(x)|\leq W\sum_{m=K}^{\lfloor 2K\ln (3M)\rfloor}(c_m-c_{m+1})\frac{m}{M^{\frac{1}{2^n(n-1)}}}\nonumber\\
&\leq \frac{W}{M^{\frac{1}{2^n(n-1)}}}\Big(c_KK+\sum_{m=K+1}^{\lfloor 2K\ln (3M)\rfloor}c_m\Big)\leq \frac{W}{M^{\frac{1}{2^n(n-1)}}}\Big(1+2\ln\frac{2K\ln (3M)}{K+1}\Big)\sup_{k\geq l}c_k k\nonumber\\
&\qquad\qquad\qquad\qquad\qquad\qquad\qquad\quad\qquad\leq 3W\frac{\ln\ln (3M)}{M^{\frac{1}{2^n(n-1)}}}\sup_{k\geq l}c_k k.
\end{align}
Combining estimates \eqref{sum_1}, \eqref{sum_3} and \eqref{sum_2}, we obtain
\begin{align}\label{m1m2}
&\sum_{m=m_1}^{m_2}(c_m-c_{m+1})|S_m(x)|\leq A\Big(M^{\frac{\varepsilon-1}{\varepsilon}}+\frac{1}{\ln (2M)}\Big)\sup_{k\geq l}c_k k \nonumber\\
&+\bigg(H'\Big(\frac{\ln (2M)}{M}\Big)^{\frac{1}{2^n(n-1)}}+3W\frac{\ln\ln (3M)}{M^{\frac{1}{2^n(n-1)}}}\bigg)\sup_{k\geq l}c_k k\leq A'\frac{\sup_{k\geq l}c_k k}{\ln (2M)},
\end{align}
where $A'>0$ depends only on $\varepsilon, n$ and $f$. So, since $M\in\mathfrak{M}$ and \eqref{M_i}, we have
\begin{align}\label{neudobn}
&\sum_{\underset{\text{inconvenient}}{m\geq l}}(c_m-c_{m+1})|S_m(x)|\leq\sum_{i\geq 1}A'\frac{\sup_{k\geq l}c_k k}{\ln (2M_i)}\leq A'\Big(\frac{1}{\ln 2}+\sum_{i\geq 2}\frac{1}{\ln M_i}\Big)\sup_{k\geq l}c_k k\nonumber\\
&\leq A'\Big(\frac{1}{\ln 2}+\frac{1}{\ln 2}\sum_{i\geq 2}\frac{1}{3^{i-2}}\Big)\sup_{k\geq l}c_k k\leq \frac{3}{\ln 2} A'\sup_{k\geq l}c_k k.
\end{align}

Thus, summing up \eqref{simple_sum}, \eqref{neudobn}, \eqref{udobn} and \eqref{taksebe}, we derive
\begin{align*}
\bigg|\sum_{m=l}^L c_m\sin f(m)x\bigg|\leq 2\sup_{k\geq l}c_k k+\frac{3}{\ln 2} A'\sup_{k\geq l}c_k k&+D''\sup_{k\geq l}c_k k+D_1''\sup_{k\geq l}c_k k\\
&\leq A''\sup_{k\geq l}c_k k,
\end{align*}
where $A''>0$ depends only on $\varepsilon, n$ and $f$. This means that the theorem is proved for $x\in \mathbb{R}\setminus\pi\mathbb{Q}$.

\vspace{2pt}
Finally, if $x\in \pi\mathbb{Q}$, we find $x'\notin \pi\mathbb{Q}$ such that $|x-x'|\leq L^{-n-1}c_1^{-1}\sup_{k\geq l}c_k k,$ then
\begin{align}\label{0end}
&\bigg|\sum_{m=l}^L c_m\sin f(m)x\bigg|\leq \bigg|\sum_{m=l}^L c_m\sin f(m)x'\bigg|+\bigg|\sum_{m=l}^L c_m\sin f(m)x-\sum_{m=l}^L c_m\sin f(m)x'\bigg|\nonumber\\
&\quad\leq A''\sup_{k\geq l}c_k k+C_f\sum_{m=l}^L c_m m^n|x-x'|\nonumber\\
&\quad\quad\leq A''\sup_{k\geq l}c_k k+C_fL^{-n-1}\sup_{k\geq l}c_k k\sum_{m=l}^L m^n\leq (A''+C_f)\sup_{k\geq l}c_k k,
\end{align}
which completes the proof.
\end{proof}

\section{The case of a power from (1,2)}

The feature of this case is that for $\alpha\in(1,2)$ the differences $(k+1)^{\alpha}-k^{\alpha}\;$ increase, and increase quite slowly. The idea of the proof is the following: to select 
blocks of such $k$ that the differences $(k+1)^{\alpha}x-k^{\alpha}x$, taken modulo $2\pi$, lie closely to $0$ or $2\pi$. Then the $``$steps$"$ between $k^{\alpha}x$ and $(k+1)^{\alpha}x$ in these blocks are small enough, and we can estimate the sums of the form $\sum_{k=k_1}^{k_1+s}\sin k^{\alpha}x$ in them by using Lemma \ref{lem_from_2}. For other $k$, a sum of the form $\sum_{k=k_1}^{k_1+s}\sin k^{\alpha}x$ slightly differs from the sum $$\sum_{k=0}^s\sin(x_0+k\gamma)=\frac{\cos\left(x_0-\frac{\gamma}{2}\right)-\cos\left(x_0+(2k+1)\frac{\gamma}{2}\right)}{2\sin\frac{\gamma}{2}},$$ where $\gamma$ is separated from $0$ and $2\pi$, hence, $\sin \frac{\gamma}{2}$ is separated from zero. The main difficulty lies in choosing the lengths of such blocks: they should not be too small to provide an appropriate estimate of the whole sum, but it should not be too large, otherwise the words $``$slightly differs$"$ will make no sence, since the differences $(k+1)^{\alpha}x-k^{\alpha}x$ will change a lot in a block.

\begin{proof}[Proof of Theorem \ref{thm} (c) for the case $\alpha\in(1,2)$] Let the condition $c_kk\to 0$ be satisfied. We will show that series \eqref{series} converges uniformly on the set $|x|\leq X<\infty$. Without loss of generality from now on we assume $x>0$ (the case $x=0$ is obvious). We fix some $\delta$ from the interval $(0,\frac{2-\alpha}{3})$. Let a natural number $l_0\geq 2$ be such that the following conditions are fulfilled:
\begin{align}\label{cond2}
\Big(\frac{\pi}{\alpha(\alpha-1)}-1\Big) l_0^2\geq \pi,\qquad l_0^{1-\frac{\alpha}{2}}>4\sqrt{\pi} \ln^2 l_0,\qquad l_0^{\frac{2-\alpha}{3}-\delta}>4 \ln^2 l_0.
\end{align}
Then for any $l\geq l_0$ all these conditions are satisfied as well. 

Consider
$$\sum_{k=l}^L c_k \sin k^{\alpha}x,$$
where $l\geq l_0$ and $x\in (0,X]$ is fixed. Let $m:=\lceil x^{-\frac{1}{\alpha}}\rceil$, so
\begin{align}\label{S1S2}
\bigg|\sum_{k=l}^L c_k \sin k^{\alpha}x\bigg|\leq \bigg|\sum_{k=l}^{m-1} c_k \sin k^{\alpha}x\bigg|+\bigg|\sum_{k=m}^L c_k \sin k^{\alpha}x\bigg|=:|S_1|+|S_2|.
\end{align}
If $m=1$, then $S_1=0$. Otherwise $2\leq m\leq x^{-\frac{1}{\alpha}}+1\leq 2x^{-\frac{1}{\alpha}}$, and we have
\begin{align}\label{S1}
|S_1|&\leq \sum_{k=l}^{m-1}c_k k^{\alpha} x\leq \sup_{k\geq l}c_k k\sum_{k=1}^{m-1}k^{\alpha-1}x\leq  x\sup_{k\geq l} c_k k \int\limits_1^{2x^{-\frac{1}{\alpha}}} y^{\alpha-1} dy\leq\frac{2^{\alpha}}{\alpha} \sup_{k\geq l} c_k k.
\end{align}
Further, denote
$$\Delta_k^1:=k^{\alpha}x-(k-1)^{\alpha}x,\qquad \Delta_k^2:=\Delta_k^1-\Delta_{k-1}^1,\qquad \tilde{\Delta}_k^1:=\Delta_k^1 \mod 2\pi,$$
so $\tilde{\Delta}_k^1\in [0,2\pi).$ Note that $\Delta_k^2$ decreases in $k$. Indeed, by Lagrange's theorem
\begin{align}\label{3}
\Big(\frac{\Delta_k^2}{x}\Big)_k'=\alpha\Big(k^{\alpha-1}&-2(k-1)^{\alpha-1}+(k-2)^{\alpha-1}\Big)\nonumber\\
&=\alpha(\alpha-1)\Big((k-1+\theta_1)^{\alpha-2}-(k-2+\theta_2)^{\alpha-2}\Big)<0,
\end{align}
where $\theta_1,\theta_2\in(0,1)$. Note also that
\begin{align}\label{4}
\Delta_k^2=\alpha x\Big((k-1+\theta_3)^{\alpha-1}-(k-2+\theta_4)^{\alpha-1}\Big)\leq 2\alpha(\alpha-1)x(k-2)^{\alpha-2},
\end{align}
where $\theta_3,\theta_4\in(0,1)$, and that
\begin{align}\label{5}
\Delta_k^2\geq \frac{1}{2}(\Delta_{k+1}^2+\Delta_k^2)&=\frac{1}{2}(\Delta_{k+1}^1-\Delta_{k-1}^1)=\frac{1}{2}\alpha x \Big((k+\theta_5)^{\alpha-1}-(k-2+\theta_6)^{\alpha-1}\Big)\nonumber\\
&\geq \frac{1}{2}\alpha(\alpha-1)x (k+1)^{\alpha-2},
\end{align}
where again $\theta_5,\theta_6\in(0,1)$. Let $$K_1:=\Big\{k:\tilde{\Delta}_{k+1}^1\in[0,m^{-\delta}]\cup[2\pi-m^{-\delta},2\pi]\Big\},\quad K_2:=\Big\{k:\tilde{\Delta}_{k+1}^1\in[m^{-\delta},2\pi-m^{-\delta}]\Big\}.$$ Then we have
\begin{align}\label{S2'S2''}
|S_2|\leq \bigg|\sum_{\underset{k\in K_1}{k=m}}^L c_k \sin k^{\alpha}x\bigg|+\bigg|\sum_{\underset{k\in K_2}{k=m}}^L c_k \sin k^{\alpha}x\bigg|=:|S_2'|+|S_2''|.
\end{align}

First we estimate $S_2'$. According to \eqref{3} and \eqref{4}, for $k\geq m+2$ there holds $\Delta_k^2\leq 2\alpha(\alpha-1)x m^{\alpha-2}$. Hence, we can find $p=p(m)$ such that $$p:=\min\Big\{p'>1:|\Delta_{m+p'}^1-\Delta_{m+1}^1-2\pi|\leq 2\alpha(\alpha-1)x m^{\alpha-2}\Big\}.$$ This yields
$$2\alpha (\alpha-1)xm^{\alpha-2}p\geq 2\pi -2\alpha(\alpha-1) x m^{\alpha-2},$$
so
\begin{align}\label{p}
p\geq -1+\frac{2\pi}{2\alpha(\alpha-1)x}m^{2-\alpha}\geq m^{2-\alpha}x^{-1},
\end{align}
due to the first condition from \eqref{cond2}.

Since $\Delta_k^1$ increases in $k$, (see, for instance, \eqref{5}), and since $p$ we chose as the minimal one, we have $0<\Delta_{m+p-1}^1-\Delta_{m+1}^1<2\pi$. Hence, among $\tilde{\Delta}_{m+1}^1,\;\tilde{\Delta}_{m+2}^1,...,\\\tilde{\Delta}_{m+p}^1$, there are not more than three blocks of consecutive $\tilde{\Delta}_i^1$, i.e., blocks from $\tilde{\Delta}_{i_1}^1, \tilde{\Delta}_{i_1+1}^1,...,\tilde{\Delta}_{i_1+i_2}^1$, with values increasing and lying in one of the intervals $[0,m^{-\delta}]$ or $[2\pi-m^{-\delta},2\pi]$. Focus on the case of $[0,m^{-\delta}]$, the second one is to treat similarly. Let our block be $\tilde{\Delta}_{s+1}^1,\;\tilde{\Delta}_{s+2}^1,...,\tilde{\Delta}_{s+v}^1$. Without loss of generality we can assume that $s^{\alpha}x\in [\pi u,\pi(u+1))=:I_u$ for some even $u$. Let $t$ be such that 
\begin{align*}
s^{\alpha}x+\sum_{i=0}^t \tilde{\Delta}_{s+i}^1\in I_u,\qquad s^{\alpha}x+\sum_{i=0}^{t+1} \tilde{\Delta}_{s+i}^1\notin I_u.
\end{align*}
Then the following inequality must be valid
\begin{align}\label{pokrmere}
\pi\geq (t-1)\Delta_{s+2}^2+(t-2)\Delta_{s+3}^2+...+1\cdot \Delta_{s+t}^2.
\end{align}
From \eqref{5} and \eqref{pokrmere} we have
\begin{align}\label{izpokrmere}
\pi\geq \frac{\alpha(\alpha-1)}{2}x \Big((t-1)(s+3)^{\alpha-2}+(t-2)(s+4)^{\alpha-2}+...+1\cdot (s+t+1)^{\alpha-2}\Big).
\end{align}
Note that the function $\kappa(y)=y(a-y)^{-c}+(b-y)(a-b+y)^{-c}$ does not increase in $c>0, \;a\geq b\geq 2y>0$. Indeed,
$$\kappa'(y)=(a-y)^{-c}-y(-c)(a-y)^{-c-1}-(a-b+y)^{-c}+(b-y)(-c)(a-b+y)^{-c-1}<0,$$
since $a-y\geq a-b+y$ and $y\leq b-y$. Therefore, for $c=2-\alpha,\;b=t,\;a=s+t+2$ and $y=t-i,\;i=1,2,...,\lfloor\frac{t-1}{2}\rfloor$, we have 
$$(t-i)(s+2+i)^{\alpha-2}+i(s+t+2-i)^{\alpha-2}\geq 2\frac{t}{2}\Big(s+2+\frac{t}{2}\Big)^{\alpha-2},$$
thus, using \eqref{izpokrmere},
\begin{align}\label{zamech}
\pi\geq \frac{\alpha(\alpha-1)}{2}x(t-1)&\frac{t}{2}\Big(s+2+\frac{t}{2}\Big)^{\alpha-2}\nonumber\\
&\geq \frac{\alpha(\alpha-1)}{2}x(t-1)\frac{t-1}{2}\Big(s+2+\frac{t-1}{2}\Big)^{\alpha-2}.
\end{align}
If $t-1\geq 2(s+2)$, then we have from \eqref{zamech}
$$\pi\geq \frac{\alpha(\alpha-1)}{2}x\frac{(t-1)^{\alpha}}{2}\geq \frac{\alpha(\alpha-1)}{4}(t-1)^{\alpha}m^{-\alpha},$$
from which $t-1\leq \big(\frac{4\pi}{\alpha(\alpha-1)}\big)^{\frac{1}{\alpha}} m\leq \frac{4\pi}{\alpha-1} m \leq \frac{4\pi}{\alpha-1} (s+2)$. Thus, we get from \eqref{zamech}
$$\pi\geq \frac{\alpha(\alpha-1)}{2}x\frac{(t-1)^2}{2}\Big(\frac{4\pi}{\alpha-1}+1\Big)^{\alpha-2}(s+2)^{\alpha-2}>\frac{(\alpha-1)^2}{20\pi}(t-1)^{2}(s+2)^{\alpha-2}x,$$
hence,
\begin{align}\label{est_t}
t<\frac{8}{\alpha-1}(s+2)^{1-\frac{\alpha}{2}}x^{-\frac{1}{2}}+1\leq \frac{27}{\alpha-1}s^{1-\frac{\alpha}{2}}x^{-\frac{1}{2}}.
\end{align}
Let $t_0:=t$, and we define $t_i$ for $i\geq 1$ in the following way: $$s^{\alpha}x+\sum_{j=1}^{t_i} \tilde{\Delta}_{s+j}^1\in I_{u+i},\quad s^{\alpha}x+\sum_{j=1}^{t_i+1} \tilde{\Delta}_{s+j}^1\notin I_{u+i}.$$ We also denote by $R$ the minimal even number for which there holds $s^{\alpha}x+\sum_{j=1}^{v} \tilde{\Delta}_{s+j}^1<\pi(u+R+2)$. Then by an argument similar to that of \eqref{est_t} we get
\begin{align}\label{est_t_1}
t_1<\frac{54}{\alpha-1}s^{1-\frac{\alpha}{2}}x^{-\frac{1}{2}}.
\end{align}
We also have
\begin{align}\label{sum_R}
\sum_{k=s}^{s+v}c_k\sin k^{\alpha}x =\sum_{k=s}^{s+t_0}c_k\sin k^{\alpha}x+\sum_{i=0}^{\frac{R}{2}-1}\sum_{k=s+t_{2i}+1}^{s+t_{2i+2}}c_k\sin k^{\alpha}x+\sum_{k=s+t_R}^{s+v}c_k\sin k^{\alpha}x.
\end{align}
Note that due to \eqref{est_t}
\begin{align}\label{uchest_t}
\sum_{k=s}^{s+t_0}c_k\sin k^{\alpha}x\leq t c_s<\frac{27}{\alpha-1}s^{1-\frac{\alpha}{2}}x^{-\frac{1}{2}} c_s\leq \frac{27}{\alpha-1}s^{-\frac{\alpha}{2}}x^{-\frac{1}{2}}\sup_{k\geq l} c_k k.
\end{align}

\begin{lemma}\label{lem_from_2} Let the points $y_1,...,y_k$ be such that $0 < y_1\leq y_2-y_1\leq y_3-y_2\leq ...\leq y_k-y_{k-1},\; y_k\leq  2\pi$ and let the number $q$ be such that $y_q\leq \pi<y_{q+1}$ and $y_{q+1}-y_q=a<\pi$. Then $\sum_{i=1}^k\sin y_i\geq -\sin\frac{a}{2}$.
\end{lemma}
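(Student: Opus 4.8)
Throughout set $d_i:=y_{i+1}-y_i$ for $1\le i\le k-1$ and $d_0:=y_1$, so that $0<d_0\le d_1\le\dots\le d_{k-1}$, $d_q=a$, and $y_m=d_0+\dots+d_{m-1}$. Since $\sin$ is nonnegative on $(0,\pi]$ and nonpositive on $(\pi,2\pi]$, write $\sum_{i=1}^k\sin y_i=\Sigma^-+\Sigma^+$ with $\Sigma^-:=\sum_{i\le q}\sin y_i\ge0$ and $\Sigma^+:=\sum_{i>q}\sin y_i\le0$; reflecting the upper points across $\pi$, i.e. putting $c_j:=y_{q+j}-\pi\in(0,\pi]$, we get $\sin y_{q+j}=-\sin c_j$, so the claim is equivalent to
\[
\Sigma^-\ \ge\ \sum_{j\ge1}\sin c_j-\sin\tfrac a2 .
\]
Writing $\varepsilon:=y_{q+1}-\pi\in(0,a]$, so $y_q=\pi-(a-\varepsilon)$ and $c_1=\varepsilon$, the points $c_j$ increase from $\varepsilon$ with consecutive gaps $d_{q+1}\le d_{q+2}\le\dots$ (all $\ge a$), while the reflected lower points $\pi-y_q<\dots<\pi-y_1$ increase from $a-\varepsilon$ with consecutive gaps $d_{q-1}\ge\dots\ge d_0$ (all $\le a$). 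The structural fact driving everything is this mesh dichotomy: the lower reflections form a mesh at least as fine as that of the $c_j$, both of size comparable to $a$.

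First I would record that $k-q\le q+1$: the nondecreasing gaps $d_0\le\dots\le d_q=a$ sum to $y_{q+1}>\pi$, so $(q+1)a>\pi$, whereas $d_{q+1},\dots,d_{k-1}$, each $\ge a$, sum to $y_k-y_{q+1}<\pi$, so $(k-q-1)a<\pi<(q+1)a$. Hence, matching each $c_j$ with $\pi-y_{q+1-j}$ for $j=1,\dots,\min(q,k-q)$, the unmatched terms are either among $y_1,\dots,y_q$ (nonnegative sines, discarded) or reduce to the single upper point $y_k$, and then $y_k-\pi=c_{k-q}>\pi-a$; if moreover $a>\pi/2$ this forces $q=1$, i.e. $k=3$, which I would settle by hand. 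The central pair $\{y_q,y_{q+1}\}$ contributes $\sin(a-\varepsilon)-\sin\varepsilon=2\cos\frac a2\sin\!\bigl(\frac a2-\varepsilon\bigr)$, which is $\ge0$ when $\varepsilon\le a/2$ and in any case $>-\sin a$; the reserved term $-\sin\tfrac a2$ is there to cover the other matched differences and, when $\varepsilon>a/2$, the deficit of the central pair.

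The crux — and the step I expect to be the main obstacle — is that the matched differences $\sin(\pi-y_{q+1-j})-\sin c_j$ are not individually nonnegative, and their partial sums can even drop below $-\sin\tfrac a2$, so one truly needs a global comparison of $\Sigma^-$ with $\sum_j\sin c_j$ rather than a term-by-term one. Here I would use the concavity of $\sin$ on $[0,\pi]$ together with the mesh dichotomy: bound $\Sigma^-$ from below by the Dirichlet-type sum of $\sin$ over a mesh of consecutive gaps $\le a$ filling $[0,\pi-y_1]$, bound $\sum_j\sin c_j$ from above by the analogous sum over a mesh of gaps $\ge a$, and observe that the two differ only through the boundary contributions near $0$ and near $\pi$ — that is, through $c_1=\varepsilon$ and the crossing gap $a$ — which are of size exactly $\sin\tfrac a2$. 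Executing this comparison with the requisite care (controlling the Riemann-sum error for $\sin$ via $\sin(t/2)\le t/2$, and dispatching the few residual small-$k$ configurations, notably $q=1$, by direct trigonometric estimates) gives $\Sigma^-\ge\sum_j\sin c_j-\sin\tfrac a2$, which is the assertion.
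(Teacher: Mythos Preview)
Your proposal has the right skeleton --- split at $q$, note that gaps below $\pi$ are $\le a$ and gaps above are $\ge a$, and compare each half with an arithmetic-progression (Dirichlet) sum of step $a$ --- but the crucial comparison step is only announced, not carried out. You yourself flag that the matched differences $\sin(\pi-y_{q+1-j})-\sin c_j$ are not individually nonnegative and that this is ``the main obstacle''; what follows (``Executing this comparison with the requisite care\dots gives'') is a promise, not an argument. In particular, ``a mesh of consecutive gaps $\le a$ filling $[0,\pi-y_1]$'' does not pin down any specific arithmetic progression to compare against, and your termwise matching $c_j\leftrightarrow \pi-y_{q+1-j}$ anchored at the crossing point $q$ cannot be rescued by a soft concavity/Riemann-sum argument: the error you need to control is exactly of order $\sin(a/2)$, the same as the target bound.

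The paper supplies the missing idea: do not anchor the progressions at the crossing index $q$, but at the \emph{extremal} indices $\mu$ (where $\sin y_i$ is maximal) and $\nu$ (where it is minimal). Then $\sin$ is monotone on each of the four arcs $[0,\tfrac\pi2]$, $[\tfrac\pi2,\pi]$, $[\pi,\tfrac{3\pi}2]$, $[\tfrac{3\pi}2,2\pi]$, and the mesh dichotomy translates into genuine \emph{termwise} inequalities: for $i<\mu$ one has $\sin y_i\ge\sin\bigl(y_\mu-a(\mu-i)\bigr)$ because the true points lie closer to $y_\mu$ than the step-$a$ progression and $\sin$ is increasing there; symmetrically on the other three arcs. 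Summing the four resulting geometric sine sums in closed form yields $\sum_i\sin y_i\ge(\cos\tfrac a2-1)/\sin\tfrac a2\ge-\sin\tfrac a2$. The anchoring at $\mu,\nu$ is precisely what turns your global ``mesh comparison'' into a clean, verifiable inequality.
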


\begin{proof} Let $\mu$ be such that $\sin y_{\mu}\geq \sin y_i$ for all $i$, and $\nu$ be such that $\sin y_{\nu}\leq \sin y_i$ for all $i$. Note that then 
\begin{align*}
y_1,...,y_{\mu-1}\in\left[0,\frac{\pi}{2}\right],&\;y_{\mu+1},...,y_q\in\left[\frac{\pi}{2},\pi\right],\\
&\;y_{q+1},...,y_{\nu-1}\in\left[\pi,\frac{3\pi}{2}\right],\;y_{\nu+1},...,y_k\in\left[\frac{3\pi}{2},\pi\right].
\end{align*}
In this case, for $1\leq i\leq \mu-1$ we have $y_{i+1}-y_i\leq a$, hence,
$$\sin y_i\geq \max\Big\{\sin\big(y_{\mu}-a(M-i)\big),\;0\Big\}.$$
Thus,
$$\sum_{i=1}^{\mu-1}\sin y_i\geq \sum_{j=1}^{\left\lceil \frac{y_{\mu}}{a}\right\rceil -1}\sin(y_{\mu}-aj).$$
Similarly, since for $\mu+1\leq i\leq q$ also $y_{i+1}-y_i\leq a$,
$$\sum_{i=\mu+1}^{q}\sin y_i\geq \sum_{j=1}^{\left\lceil \frac{\pi-y_{\mu}}{a}\right\rceil -1}\sin(y_{\mu}+aj).$$
Further, since for $q+1\leq i\leq k-1$ there holds $y_{i+1}-y_i\geq a$, we have
$$\sum_{i=q+1}^{\nu-1}\sin y_i\geq \sum_{j=1}^{\left\lceil \frac{y_{\nu}-\pi}{a}\right\rceil -1}\sin(y_{\nu}-aj)$$
and
$$\sum_{i=\nu+1}^{k}\sin y_i\geq \sum_{j=1}^{\left\lceil \frac{2\pi-y_{\nu}}{a}\right\rceil -1}\sin(y_{\nu}+aj).$$
Then
\begin{align*}
\sum_{i=1}^{q}\sin y_i &\geq \sum_{j=-\left\lceil \frac{y_{\mu}}{a}\right\rceil+1}^{\left\lceil \frac{\pi-y_{\mu}}{a}\right\rceil -1}\sin(y_{\mu}+aj)\\
&=\frac{\cos\left(y_{\mu}-\lceil \frac{y_{\mu}}{a}\rceil a+\frac{a}{2}\right)-\cos\big(y_{\mu}+\lceil \frac{\pi-y_{\mu}}{a}\rceil a-\frac{a}{2}\big)}{2\sin\frac{a}{2}}\geq \frac{\cos\frac{a}{2}}{\sin\frac{a}{2}}.
\end{align*}
At the same time
\begin{align*}
\sum_{i=q+1}^{k}\sin y_i &\geq \sum_{j=-\left\lceil \frac{y_{\nu}-\pi}{a}\right\rceil+1}^{\left\lceil \frac{2\pi-y_{\nu}}{a}\right\rceil -1}\sin(y_{\nu}+aj)\\
&=\frac{\cos\left(y_{\nu}-\lceil \frac{y_{\nu}-\pi}{a}\rceil a+\frac{a}{2}\right)-\cos\left(y_{\nu}+\lceil \frac{2\pi-y_{\nu}}{a}\rceil a-\frac{a}{2}\right)}{2\sin\frac{a}{2}}\geq \frac{-1}{\sin\frac{a}{2}}.
\end{align*}
So,
$$\sum_{i=1}^{k}\sin y_i\geq\frac{\cos\frac{a}{2}-1}{\sin\frac{a}{2}}\geq \frac{\cos^2\frac{a}{2}-1}{\sin\frac{a}{2}}=-\sin\frac{a}{2}.$$
\end{proof}

\begin{corollary}\label{cor_2} Let the points $y_1,...,y_k$ be as in Lemma \ref{lem_from_2} and the sequence $\{a_j\}$ be nonincreasing. Then $\sum_{i=1}^k a_i\sin y_i\geq -a_{q+1}\sin\frac{a}{2}$.
\end{corollary}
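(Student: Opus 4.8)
The plan is to deduce the corollary from Lemma \ref{lem_from_2} by a summation‑by‑parts argument. Write $P_j:=\sum_{i=1}^{j}\sin y_i$ for $1\le j\le k$ and $P_0:=0$; Abel's transformation then gives
\[
\sum_{i=1}^{k}a_i\sin y_i=a_kP_k+\sum_{i=1}^{k-1}(a_i-a_{i+1})P_i .
\]
Since $\{a_j\}$ is nonincreasing (and, as in every application of this corollary in the paper, nonnegative), all the weights $a_i-a_{i+1}$ and $a_k$ are $\ge 0$, so everything will reduce to lower bounds on the partial sums $P_j$.

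First I would record two such bounds. For $1\le j\le q$ all the points $y_1,\dots,y_j$ lie in $(0,\pi]$, because $0<y_1$ and $y_j\le y_q\le\pi$; hence $\sin y_i\ge 0$ and $P_j\ge 0$. For $q+1\le j\le k$ I would apply Lemma \ref{lem_from_2} to the truncated family $y_1,\dots,y_j$: its hypotheses are inherited verbatim — the chain $0<y_1\le y_2-y_1\le\dots$ restricts to a subchain, $y_j\le y_k\le 2\pi$, and the \emph{same} index $q$ and gap $a<\pi$ work for the truncated family since $q+1\le j$. Lemma \ref{lem_from_2} then yields $P_j\ge-\sin\frac a2$ for every such $j$. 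This uniformity over all $j\ge q+1$ (and not merely $j=k$) is the point on which the whole argument turns: it is what makes the telescoping collapse to the single coefficient $a_{q+1}$ rather than to $a_1$ or $a_k$.

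Then I would split the Abel sum at $q$:
\[
\sum_{i=1}^{k}a_i\sin y_i=\sum_{i=1}^{q}(a_i-a_{i+1})P_i+\Big(a_kP_k+\sum_{i=q+1}^{k-1}(a_i-a_{i+1})P_i\Big).
\]
The first group is nonnegative term by term, as $a_i-a_{i+1}\ge 0$ and $P_i\ge 0$ for $i\le q$. In the second group every $P_i$ with $i\ge q+1$, including $i=k$, satisfies $P_i\ge-\sin\frac a2$, and the weights are nonnegative, so the second group is at least
\[
-\sin\tfrac a2\Big(a_k+\sum_{i=q+1}^{k-1}(a_i-a_{i+1})\Big)=-a_{q+1}\sin\tfrac a2
\]
by telescoping. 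Adding the two contributions gives $\sum_{i=1}^{k}a_i\sin y_i\ge-a_{q+1}\sin\frac a2$.

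I do not expect a genuine obstacle: the computation is a routine Abel transformation. The only two things to be careful about are the applicability of Lemma \ref{lem_from_2} to every initial segment $y_1,\dots,y_j$ with $j\ge q+1$ (so that $P_j\ge-\sin\frac a2$ holds for each such $j$, not just for $j=k$), and the nonnegativity of $\{a_j\}$, which is what is really needed for the weights $a_k$ and $a_i-a_{i+1}$ to point the right way and pin the estimate to $a_{q+1}$.
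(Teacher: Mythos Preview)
Your Abel-transformation argument is correct, but the paper takes a shorter, more direct route. The paper simply splits $\sum_{i=1}^{k}a_i\sin y_i$ at $i=q$ and observes that $a_i\sin y_i\ge a_{q+1}\sin y_i$ for \emph{every} $i$: for $i\le q$ one has $\sin y_i\ge 0$ and $a_i\ge a_{q+1}$, while for $i\ge q+1$ one has $\sin y_i\le 0$ and $a_i\le a_{q+1}$. This yields $\sum a_i\sin y_i\ge a_{q+1}\sum\sin y_i$, and a single application of Lemma~\ref{lem_from_2} to the full family gives the bound $-a_{q+1}\sin\frac a2$.

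The practical difference is that the paper never needs to control the partial sums $P_j$ for intermediate $j$, so it invokes Lemma~\ref{lem_from_2} only once rather than for each truncation $y_1,\dots,y_j$ with $j\ge q+1$. Your approach is the standard summation-by-parts reflex and works fine; the paper's sign-comparison trick is just sharper for this particular setup where the sines change sign exactly once. Both arguments rely (implicitly) on the nonnegativity of the relevant $a_j$.
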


\begin{proof} We have
\begin{align*}
\sum_{i=1}^{k}a_i\sin y_i=\sum_{i=1}^{q}a_i\sin y_i&+\sum_{i=q+1}^{k}a_i\sin y_i\\
&\geq a_{q+1}\sum_{i=1}^{q}\sin y_i+a_{q+1}\sum_{i=q+1}^{k}\sin y_i\geq -a_{q+1}\sin\frac{a}{2}
\end{align*}
by Lemma \ref{lem_from_2}.
\end{proof}
\vspace{4 pt}

By Corollary \ref{cor_2} we have for any $0\leq i\leq \frac{R}{2}-1$
\begin{align}\label{t_i}
\sum_{k=s+t_{2i}+1}^{s+t_{2i+2}}c_k\sin k^{\alpha}x\leq \frac{m^{-\delta}}{2}c_{s+t_{2i+1}+1},
\end{align}
\begin{align}\label{t_i_R}
\sum_{k=s+t_R}^{s+v}c_k\sin k^{\alpha}x\leq \frac{m^{-\delta}}{2}c_{s+t_{R+1}+1}.
\end{align}
Thus, from \eqref{sum_R} and \eqref{uchest_t}, \eqref{t_i} and \eqref{t_i_R}, we derive
\begin{align}\label{poluch}
\sum_{k=s}^{s+v}c_k\sin k^{\alpha}x\leq \Big(\frac{R}{2}+1\Big)\frac{m^{-\delta}}{2}c_s+\frac{27}{\alpha-1}s^{-\frac{\alpha}{2}}x^{-\frac{1}{2}}\sup_{k\geq l} c_k k.
\end{align}
Note that $v$ must fulfill the inequality
\begin{align}\label{v}
\Delta_{s+v}^1-\Delta_{s+1}^1\leq m^{-\delta},
\end{align}
and by Lagrange's theorem, for some $\theta_7,\;\theta_8\in(0,1)$, the left hand side of \eqref{v} can be written as follows:
$$\alpha x\Big((s+v-1+\theta_7)^{\alpha-1}-(s+\theta_8)^{\alpha-1}\Big)\geq \alpha x \Big((s+v-1)^{\alpha-1}-(s+1)^{\alpha-1}\Big),$$
which yields
$$(s+v-1)^{\alpha-1}\leq m^{-\delta}x^{-1}+(s+1)^{\alpha-1},$$
hence,
\begin{align}\label{do_bern}
\Big(1+\frac{v-2}{s+1}\Big)^{\alpha-1}\leq m^{-\delta}x^{-1}(s+1)^{1-\alpha}+1.
\end{align}
By Bernoulli's inequality, the left hand side of \eqref{do_bern} is not less than $1+(\alpha-1)\frac{v-2}{s+1},$ so we get
$$(\alpha-1)\frac{v-2}{s+1}\leq m^{-\delta}x^{-1}(s+1)^{1-\alpha},$$
then
\begin{align}\label{v_new}
v\leq \frac{1}{(\alpha-1)}m^{-\delta}x^{-1}(s+1)^{2-\alpha}+2\leq \frac{1+2X(\alpha-1)}{(\alpha-1)}m^{-\delta}x^{-1}(s+1)^{2-\alpha}.
\end{align}
Thus, it follows from \eqref{3}, \eqref{4} and \eqref{v_new} that
\begin{align}\label{R}
R\leq \frac{1}{2\pi}\Delta_{s+2}^2 v &\leq \frac{\alpha(\alpha-1)}{\pi}s^{\alpha-2}x \frac{1+2X(\alpha-1)}{(\alpha-1)}m^{-\delta}x^{-1}(s+1)^{2-\alpha}\nonumber\\
&\leq\frac{2\cdot2^{2-\alpha}\cdot (1+2X)}{\pi}m^{-\delta}\leq (2+4X)m^{-\delta}.
\end{align}
From \eqref{poluch} and \eqref{R} we derive
\begin{align*}
\sum_{k=s}^{s+v}c_k\sin k^{\alpha}x\leq (2+4X)m^{-\delta}&\frac{m^{-\delta}}{2}c_s+\frac{27}{\alpha-1}s^{-\frac{\alpha}{2}}x^{-\frac{1}{2}}\sup_{k\geq l}c_k k\\
&\leq  \Big((1+2X)m^{-1-2\delta}+\frac{27}{\alpha-1}s^{-\frac{\alpha}{2}}x^{-\frac{1}{2}}\Big)\sup_{k\geq l} c_k k,
\end{align*}
hence,
$$\sum_{k=m}^{m+p}c_k\sin k^{\alpha}x\leq 3 \Big((1+2X)m^{-1-2\delta}+\frac{27}{\alpha-1}m^{-\frac{\alpha}{2}}x^{-\frac{1}{2}}\Big)\sup_{k\geq l} c_k k.$$
So, in view of \eqref{p},
\begin{align}\label{S2'}
S_2'\leq 3&\sum_{i=0}^{\infty}\Big((1+2X)w_i^{-1-2\delta}+\frac{27}{\alpha-1}w_i^{-\frac{\alpha}{2}}x^{-\frac{1}{2}}\Big)\sup_{k\geq l} c_k k \nonumber\\
\leq &3 \Big((1+2X)\frac{m^{-2\delta}}{2\delta}+\frac{27}{\alpha-1}\sum_{i=0}^{\infty}w_i^{-\frac{\alpha}{2}}x^{-\frac{1}{2}}\Big)\sup_{k\geq l} c_k k,
\end{align}
where $w_0:=m$ and $w_{i+1}:=w_i+w_i^{2-\alpha}x^{-1}\geq w_i+1$ for $i\geq 0$, therefore, 
\begin{align}\label{w_i}
w_i\underset{i\to\infty}{\to}\infty.
\end{align}
Recall that $m\geq l\geq l_0\geq 2$ and consider the function
$$F(m):=\int\limits_m^{\infty}\frac{dy}{y\ln ^2 y}=\frac{1}{\ln m}.$$
According to \eqref{w_i}, 
\begin{align}\label{sogl}
F(m)=\sum_{j=0}^{\infty}\int\limits_{w_j}^{w_{j+1}}\frac{dy}{y\ln^2 y}=:\sum_{j=0}^{\infty}W_j.
\end{align}
Suppose that, for $j=0,...,J$ and for these values only, there holds $x^{-1}>w_j^{\alpha-1}$, then for $j=0,...,J-1$ we have $w_{j+1}\geq 2w_j$, hence,
\begin{align}\label{sum_J}
\sum_{i=0}^{J}w_i^{-\frac{\alpha}{2}}x^{-\frac{1}{2}}\leq m^{-\frac{\alpha}{2}}x^{-\frac{1}{2}}\sum_{i=0}^{\infty}2^{-\frac{i\alpha}{2}}\leq \frac{1}{1-2^{-\frac{\alpha}{2}}}\leq 4.
\end{align}
Besides, for $j>J$, there holds $x^{-1}\leq w_j^{\alpha-1}$, and then, using the inequality $\ln(1+y)\geq y/2$ which is true for $y\leq 1$, we obtain
\begin{align}\label{W_j}
W_j=\frac{1}{\ln w_j}-&\frac{1}{\ln \left(w_j+w_j^{2-\alpha}x^{-1}\right)}=\frac{\ln \left(1+w_j^{1-\alpha}x^{-1}\right)}{\ln w_j \ln\left(w_j+w_j^{2-\alpha}x^{-1}\right)}\nonumber\\
&\geq \frac{w_j^{1-\alpha}x^{-1}}{2\ln w_j \ln(2w_j)}\geq w_j^{-\frac{\alpha}{2}}x^{-\frac{1}{2}}.
\end{align}
Here we used the double inequality
$$w_j^{1-\frac{\alpha}{2}}x^{-\frac{1}{2}}\geq w_j^{1-\frac{\alpha}{2}}\pi^{-\frac{1}{2}}\geq 4\ln^2 w_j,$$
which is valid since $w_j\geq m\geq l_0$ by the second condition from \eqref{cond2}. Thus, from \eqref{sogl} and \eqref{W_j},
\begin{align}\label{ln_2}
\sum_{i=J+1}^{\infty}w_i^{-\frac{\alpha}{2}}x^{-\frac{1}{2}}\leq \sum_{i=J+1}^{\infty}W_j\leq F(m)\leq \frac{1}{\ln 2}.
\end{align}
Combining estimates \eqref{sum_J} and \eqref{ln_2}, we derive from \eqref{S2'} that
\begin{align}\label{S2'new}
S_2'\leq 3 \left(\frac{1+2X}{2\delta}+\frac{27}{\alpha-1}\Big(4+\frac{1}{\ln 2}\Big)\right)\sup_{k\geq l} c_k k.
\end{align}
Replacing \eqref{sum_R} by
$$\sum_{k=s}^{s+v}c_k\sin k^{\alpha}x =\sum_{k=s}^{s+t_1}c_k\sin k^{\alpha}x+\sum_{i=2}^{\frac{R}{2}-1}\sum_{k=s+t_{2i-1}+1}^{s+t_{2i+1}}c_k\sin k^{\alpha}x+\sum_{k=s+t_R}^{s+v}c_k\sin k^{\alpha}x$$
and using the same argument, we get, with the help of \eqref{est_t_1},
\begin{align}\label{min_S2'}
S_2'\geq -3 \left(\frac{1+2X}{2\delta}+\frac{54}{\alpha-1}\Big(4+\frac{1}{\ln 2}\Big)\right)\sup_{k\geq l} c_k k.
\end{align}
Summing up \eqref{S2'new} and \eqref{min_S2'}, we have finally
\begin{align}\label{|S2'|}
|S_2'|\leq C(\alpha, X)\sup_{k\geq l} c_k k.
\end{align}

Consider now $S_2''$. Let $m'=m'(m)\geq m$ be the first number such that $m'\in K_2$. Put $Q=Q(m):=\lceil m^{\frac{2-\alpha}{3}}\rceil$. Note that for $k\in K_2$ there holds 
\begin{align}\label{Delta_k+1}
\frac{m^{-\delta}}{2}\leq \frac{\tilde{\Delta}_{k+1}^1}{2}\leq \pi-\frac{m^{-\delta}}{2}.
\end{align}
Applying the Abel transformation, we get
\begin{align}\label{abel}
\sum_{k=m'}^{m'+Q-1}c_k \sin k^{\alpha} x=\sum_{q=0}^{Q-1}(c_{m'+q}-c_{m'+q+1})\sum_{k=m'}^{m'+q}\sin k^{\alpha} x+c_{m'+Q}\sum_{k=m'}^{m'+Q-1}\sin k^{\alpha}x.
\end{align}
Besides,
$$(m'+q)^{\alpha} x\underset{\mod 2\pi}{=} (m')^{\alpha} x+\sum_{t=1}^q \tilde{\Delta}_{m'+t}^1,$$
and then from \eqref{3} and \eqref{4}
\begin{align}\label{34}
\big|(m'+t)^{\alpha}-(m')^{\alpha}-\tilde{\Delta}_{m+1}^1 t\big|\leq \frac{t(t-1)}{2}\Delta_{m'+2}^2\leq t(t-1)\alpha(\alpha-1) x m^{\alpha-2}.
\end{align}
Since for arbitrary $g,h\in\mathbb{R}$ there holds $|\sin(g+h)-\sin g|\leq |h|,$ it follows from \eqref{34} that for $q\leq Q-1$
\begin{align}\label{sum_of_squares}
&\bigg|\sum_{k=m'}^{m'+q}\sin k^{\alpha}x-\sum_{t=0}^{q}\sin\left((m')^{\alpha}x+\tilde{\Delta}_{m+1}^1 t\right)\bigg|\leq \frac{Q(Q+1)(2Q+1)}{6}\alpha(\alpha-1)x m^{\alpha-2}\nonumber\\
&\leq Q^3\alpha(\alpha-1)x m^{\alpha-2}\leq (2m^{\frac{2-\alpha}{3}})^3\alpha(\alpha-1)x m^{\alpha-2}=2^{2-\alpha}\alpha(\alpha-1)x\leq 4X.
\end{align}
Besides, taking into account \eqref{Delta_k+1},
\begin{align}\label{pimdelta}
\bigg|\sum_{t=0}^q\sin \left((m')^{\alpha}x+\tilde{\Delta}_{m+1}^1 t\right)\bigg|&=\Bigg|\frac{\cos \left((m')^{\alpha}x-\frac{\tilde{\Delta}_{m+1}^1}{2}\right)-\cos \left((m')^{\alpha}x+\frac{\tilde{\Delta}_{m+1}^1(2q+1)}{2}\right)}{2\sin\frac{\tilde{\Delta}_{m+1}^1}{2}}\Bigg|\nonumber\\
&\leq \frac{2}{2\frac{2}{\pi}\frac{m^{-\delta}}{2}}=\pi m^{\delta}.
\end{align}
From \eqref{sum_of_squares} and \eqref{pimdelta} we have
\begin{align}\label{3pimdelta}
\bigg|\sum_{k=m'}^{m'+q}\sin k^{\alpha}x\bigg|\leq \pi m^{\delta}+4X\leq (\pi+4X) m^{\delta},
\end{align}
and from \eqref{3pimdelta} and \eqref{abel} it follows that
\begin{align}\label{last}
\bigg|\sum_{k=m'}^{m'+Q-1}c_k\sin k^{\alpha}x\bigg|\leq c_{m'} (\pi+4X) m^{\delta} \leq c_{m} (\pi+4X) m^{\delta}\leq (\pi+4X) m^{\delta-1}\sup_{k\geq l} c_k k.
\end{align}

Let $Q'=Q'(m)\geq Q(m)$ be the minimal number such that $m'+Q'\in K_2$. Denote $m_0:=m,\;m_{i+1}:=m'(m_i)+Q'(m_i)$ for any $i\geq 0$. Since
$$Q'\geq Q \geq m^{\frac{2-\alpha}{3}},$$
we have
\begin{align}\label{takkakto}
m_{i+1}\geq m_i+m_i^{\frac{2-\alpha}{3}}.
\end{align}
Notice that in the sum on the left hand side of \eqref{last} there can appear blocks of such $k$ that $k\in K_1$ and the values $\tilde{\Delta}_{k+1}^1$ in a block increase and belong to an interval $[0,m^{-\delta}]$ or $[2\pi-m^{-\delta},2\pi]$. The sum over each one of these blocks can be estimated as in \eqref{poluch}, where we estimated the corresponding block of $S_2'$. So, from \eqref{|S2'|}, \eqref{last} and \eqref{takkakto}, and also recalling that $\delta<\frac{2-\alpha}{3}<1$, we get
\begin{align}\label{|S2''|}
|S_2''|&\leq C(\alpha, X)\sup_{k\geq l}c_k k+(\pi+4X) \sup_{k\geq l} c_k k\sum_{i=0}^{\infty}c_{m_i}m_i^{\delta-1}\nonumber\\
&\leq C(\alpha, X)\sup_{k\geq l}c_k k+(\pi+4X) \sup_{k\geq l} c_k k\sum_{i=0}^{\infty}z_i^{\delta-1},
\end{align}
where $z_0:=m,\;z_{i+1}:=z_i+z_i^{\frac{2-\alpha}{3}}\geq z_i+1$ for any $i$, and hence, $z_i\underset{i\to\infty}{\to}\infty.$ Therefore, 
\begin{align}\label{Final}
F(m)=\sum_{j=0}^{\infty}\int\limits_{z_j}^{z_{j+1}}\frac{dy}{y\ln ^2 y}=:\sum_{j=0}^{\infty} Z_j.
\end{align}
For the sake of convinience denote $\frac{2-\alpha}{3}=:\gamma>\delta$. Using the inequality $\ln(1+y)\geq y/2$, valid for $y\leq 1$, we have
\begin{align}\label{Z_j}
Z_j=\frac{1}{\ln z_j}-\frac{1}{\ln \left(z_j+z_j^{\gamma}\right)}=\frac{\ln \big(1+z_j^{\gamma-1}\big)}{\ln z_j \ln\left(z_j+z_j^{\gamma}\right)}\geq \frac{z_j^{\gamma-1}}{2\ln z_j \ln(2z_j)}>z_j^{\delta-1}.
\end{align}
The latter inequality in \eqref{Z_j} is due to the inequality
$$z_j^{\gamma-\delta}>4\ln^2 z_j,$$
which is true since $z_j\geq l_0$ and in view of the third condition from \eqref{cond2}. Thus, \eqref{|S2''|}, \eqref{Final} and \eqref{Z_j} imply
\begin{align}\label{theend}
|S_2''|&\leq C(\alpha, X)\sup_{k\geq l} c_k k+(\pi+4x) \sup_{k\geq l} c_k k\sum_{i=0}^{\infty}Z_i=C(\alpha, X)\sup_{k\geq l} c_k k\nonumber\\
&+(\pi+4X) F(m)\sup_{k\geq l} c_k k\leq \Big(C(\alpha, X)+\frac{\pi+4X}{\ln 2}\Big)\sup_{k\geq n} c_k k.
\end{align}
Finally, combining \eqref{S1S2}, \eqref{S1}, \eqref{S2'S2''}, \eqref{|S2'|} and \eqref{theend}, we get
$$\bigg|\sum_{k=l}^L c_k \sin k^{\alpha}x\bigg|\leq \Big(\frac{2^{\alpha}}{\alpha}+2C(\alpha, X)+\frac{\pi+4X}{\ln 2}\Big)\sup_{k\geq l} c_k k,$$
which assures that in the case of fulfilling the condition $c_kk\to 0$ our series converges uniformly.
\end{proof}

\section{The case of a power from (0,1)}
\begin{proof}[Proof of Theorem \ref{thm} (c) for the case $\alpha\in(0,1)$]
Suppose that the condition $c_kk\to 0$ is satisfied. We will show that the series \eqref{series} converges uniformly on the set $|x|\leq X<\infty$. Without loss of generality from now on we assume $x>0$. Take an odd number $D\geq 3$ fulfilling the following conditions
\begin{align}\label{cond}
(\pi X^{-1})^{\frac{1}{\alpha}}D^{\frac{1}{\alpha}-1}&\geq 12\alpha,\quad\Big(1+\frac{1}{D}\Big)^{\frac{1}{\alpha}-1}\leq \frac{4}{3},\nonumber\\
&\Big(1-\frac{3}{2\alpha}\frac{1}{D}\Big)^{\alpha-1}\leq \frac{4}{3},\quad \Big(1+\frac{3}{2D}\Big)^{\frac{1}{\alpha}-2}\leq 2,
\end{align}
and let $E:=D+1$. Consider the sum $\sum_{k=l}^{L}c_k\sin k^{\alpha}x$ at an arbitrary point $x\in(0,X]$. 
If $x\leq \pi L^{-\alpha},$ then
\begin{align}\label{case_1}
0\leq \sum_{k=l}^{L}c_k\sin k^{\alpha}x\leq x\sum_{k=l}^L c_k k^{\alpha}\leq x\sup_{k\geq l} c_k k\sum_{k=l}^L k^{\alpha-1}&\leq \pi L^{-\alpha}\frac{(2L)^{\alpha}}{\alpha}\sup_{k\geq l} c_k k\nonumber\\
&=:C_1\sup_{k\geq l} c_k k.
\end{align}

If $x\geq \pi l^{-\alpha}$ and $L^{\alpha}x-l^{\alpha}x\leq 6\pi,$ we have $L^{\alpha}-l^{\alpha}\leq \frac{6\pi}{x}\leq 6 l ^{\alpha},$ hence, $L\leq 7^{\frac{1}{\alpha}}l$, therefore,
\begin{align}\label{case_2}
\bigg|\sum_{k=l}^L c_k\sin k^{\alpha}x\bigg|\leq \sum_{k=l}^L c_k \leq c_l(L-l+1)<7^{\frac{1}{\alpha}}l c_l
&\leq 7^{\frac{1}{\alpha}}\sup_{k\geq l} c_k k\nonumber\\
&=:C_2(\alpha)\sup_{k\geq l} c_k k.
\end{align}

The remaining case is that of $x\geq \pi l^{-\alpha}$ and $L^{\alpha}x-l^{\alpha}x > 6\pi.$

Let odd numbers $d_1,\;d_2$ and even numbers $e_1,\;e_2$ be such that
\begin{align*}
\pi(e_1-2)<xl^{\alpha}&\leq \pi e_1,\quad \pi(d_1-2)<xl^{\alpha}\leq \pi d_1,\\
&\quad \pi e_2\leq xL^{\alpha}< \pi(e_2+2),\quad \pi d_2\leq xL^{\alpha}\leq \pi(d_2+2).
\end{align*}

Note that for any $\gamma>0$ and $d\geq 3$ there holds
\begin{align*}
&F(\gamma,d)=F(\gamma,d,\alpha):=\frac{\lfloor(\gamma d)^{\frac{1}{\alpha}}\rfloor-\left\lfloor\big(\gamma (d-2)\big)^{\frac{1}{\alpha}}\right\rfloor}{\left\lfloor\big(\gamma (d-2)\big)^{\frac{1}{\alpha}}\right\rfloor+1}\leq \frac{2\Big((\gamma d)^{\frac{1}{\alpha}}-\big(\gamma (d-2)\big)^{\frac{1}{\alpha}}\Big)}{\big(\gamma (d-2)\big)^{\frac{1}{\alpha}}}\nonumber\\
&\leq \frac{2\Big((\gamma d)^{\frac{1}{\alpha}}-\big(\gamma (d-2)\big)^{\frac{1}{\alpha}}\Big)}{\big(\gamma (d-2)\big)^{\frac{1}{\alpha}}}=2\Big(\Big(\frac{d}{d-2}\Big)^{\frac{1}{\alpha}}-1\Big)\leq 2\big (3^{\frac{1}{\alpha}}-1\big)=:C.
\end{align*}

Thus, we have
\begin{align}\label{cherezC1}
\bigg|\sum_{k=l}^{\lfloor (\pi x^{-1}d_1)^{\frac{1}{\alpha}}\rfloor}c_k\sin k^{\alpha}x\bigg|&\leq c_l \sum_{k=\lfloor (\pi x^{-1}(d_1-2))^{\frac{1}{\alpha}}\rfloor+1}^{\lfloor (\pi x^{-1} d_1)^{\frac{1}{\alpha}}\rfloor}1\nonumber\\
\leq c_l\Big(\left\lfloor \big(\pi x^{-1}(d_1-2)\big)^{\frac{1}{\alpha}}\right\rfloor+1\Big)F(\pi x^{-1},d_1)&\leq lc_l F(\pi x^{-1},d_1)\leq C\sup_{k\geq l} c_k k.
\end{align}
Similarly,
\begin{align}\label{cherezC2}
\bigg|\sum_{k=l}^{\lfloor (\pi x^{-1} e_1)^{\frac{1}{\alpha}}\rfloor}c_k\sin k^{\alpha}x\bigg|\leq C\sup_{k\geq l} c_k k.
\end{align}
Further,
\begin{align}\label{cherezC3}
&\bigg|\sum_{k=\lfloor (\pi x^{-1} e_2)^{\frac{1}{\alpha}}\rfloor+1}^{L}c_k\sin k^{\alpha}x\bigg|\leq c_{\lfloor (\pi x^{-1} e_2)^{\frac{1}{\alpha}}\rfloor+1} \sum_{k=\lfloor (\pi x^{-1} e_2)^{\frac{1}{\alpha}}\rfloor+1}^{\lfloor (\pi x^{-1} (e_2+2))^{\frac{1}{\alpha}}\rfloor}1\nonumber\\
&\qquad\leq c_{\lfloor (\pi x^{-1} e_2)^{\frac{1}{\alpha}}\rfloor+1}\Big(\lfloor (\pi x^{-1} e_2)^{\frac{1}{\alpha}}\rfloor+1\Big)F(\pi x^{-1},e_2+2)\leq C \sup_{k\geq l} c_k k.
\end{align}
Similarly,
\begin{align}\label{cherezC4} 
\bigg|\sum_{k=\lfloor (\pi x^{-1} d_2)^{\frac{1}{\alpha}}\rfloor+1}^{L}c_k\sin k^{\alpha}x\bigg|\leq C \sup_{k\geq l} c_k k.
\end{align}

Now consider the sum
\begin{align*}
&S(d):=\sum_{k=\lfloor (\pi x^{-1} d)^{\frac{1}{\alpha}}\rfloor+1}^{\lfloor (\pi x^{-1} (d+2))^{\frac{1}{\alpha}}\rfloor}\sin k^{\alpha}x=\sum_{k=\lfloor (\pi x^{-1} d)^{\frac{1}{\alpha}}\rfloor+1}^{\lfloor (\pi x^{-1} (d+\frac{1}{2}))^{\frac{1}{\alpha}}\rfloor}+\sum_{k=\lfloor (\pi x^{-1} (d+\frac{1}{2}))^{\frac{1}{\alpha}}\rfloor+1}^{\lfloor (\pi x^{-1} (d+1))^{\frac{1}{\alpha}}\rfloor}\\
&+\sum_{k=\lfloor (\pi x^{-1} (d+1))^{\frac{1}{\alpha}}\rfloor+1}^{\lfloor (\pi x^{-1} (d+\frac{3}{2}))^{\frac{1}{\alpha}}\rfloor}+\sum_{k=\lfloor (\pi x^{-1} (d+\frac{3}{2})^{\frac{1}{\alpha}}\rfloor+1}^{\lfloor (\pi x^{-1} (d+2))^{\frac{1}{\alpha}}\rfloor}\sin k^{\alpha}x=:S_1(d)+S_2(d)+S_3(d)+S_4(d),
\end{align*}
where $d\geq D$ is an odd number. 

First we show that the sum $S_2(d)+S_3(d)$ cannot be too large, because most of the summands contained in the sums $S_2(d)$ and $S_3(d)$ can be split into pairs so that the sum of any pair would be close to zero and nonpositive. Let in the $s$-th pair the values $k$ be $\lfloor (\pi x^{-1} (d+1))^{\frac{1}{\alpha}}\rfloor+s$ and $\lfloor (\pi x^{-1} (d+1))^{\frac{1}{\alpha}}\rfloor-1-s$, where
\begin{align}\label{s}
s=0,1,...,&\min\Bigg\{\left\lfloor \Big(\pi x^{-1} \Big(d+\frac{3}{2}\Big)\Big)^{\frac{1}{\alpha}}\right\rfloor-\left\lfloor \Big(\pi x^{-1} (d+1)\Big)^{\frac{1}{\alpha}}\right\rfloor,\nonumber\\
&\left\lfloor \Big(\pi x^{-1} (d+1)\Big)^{\frac{1}{\alpha}}\right\rfloor-\left\lfloor \Big(\pi x^{-1} \Big(d+\frac{1}{2}\Big)\Big)^{\frac{1}{\alpha}}\right\rfloor-1\Bigg\}.
\end{align}
Note that there is exactly one pair consisting of summands of $S_2(d)$ and any other pair consists of a summand of $S_2(d)$ and a summand of $S_3(d)$. The sum of the values of the $s$-th pair is
\begin{align}\label{sumpair}
&\sin \bigg(\left\lfloor \Big(\pi x^{-1} (d+1)\Big)^{\frac{1}{\alpha}}\right\rfloor+s\bigg)^{\alpha}x+\sin \bigg(\left\lfloor \Big(\pi x^{-1} (d+1)\Big)^{\frac{1}{\alpha}}\right\rfloor-1-s\bigg)^{\alpha}x\qquad\nonumber\\
&\;=2\sin \Bigg(\bigg(\left\lfloor \Big(\pi x^{-1} (d+1)\Big)^{\frac{1}{\alpha}}\right\rfloor+s\bigg)^{\alpha}+\bigg(\left\lfloor \Big(\pi x^{-1} (d+1)\Big)^{\frac{1}{\alpha}}\right\rfloor-1-s\bigg)^{\alpha}\Bigg)\frac{x}{2}\nonumber\\
&\;\;\;\;\cdot\cos \Bigg(\bigg(\left\lfloor \Big(\pi x^{-1} (d+1)\Big)^{\frac{1}{\alpha}}\right\rfloor+s\bigg)^{\alpha}-\bigg(\left\lfloor \Big(\pi x^{-1} (d+1)\Big)^{\frac{1}{\alpha}}\right\rfloor-1-s\bigg)^{\alpha}\Bigg)\frac{x}{2}.
\end{align}
According to \eqref{s}, the argument of any cosine in \eqref{sumpair} lies on the interval $[0,\pi/2]$, hence, all the cosines are nonnegative. Let us show now that the arguments of all sines in \eqref{sumpair} lie in the half-interval $[\pi d,\pi(d+1))$ which would lead to nonpositivity of these sines. Due to convexity of the function $\chi(y)=y^{\alpha}$ ($\chi''(y)=\alpha(\alpha-1)y^{\alpha-2}<0$ for $y>0$) on $\mathbb{R}^+$, the argument of the sine does not exceed
$$2\left(\left\lfloor \Big(\pi x^{-1} (d+1)\Big)^{\frac{1}{\alpha}}\right\rfloor-\frac{1}{2}\right)^{\alpha}\frac{x}{2}<\pi(d+1).$$
At the same time, there holds
\begin{align}\label{dop}
\Big(\pi x^{-1} \Big(d+\frac{3}{2}\Big)\Big)^{\frac{1}{\alpha}}&-\Big(\pi x^{-1} (d+1)\Big)^{\frac{1}{\alpha}}+1\nonumber\\
&<\Big(\pi x^{-1} (d+2)\Big)^{\frac{1}{\alpha}}-\Big(\pi x^{-1} (d+1)\Big)^{\frac{1}{\alpha}}-\frac{1}{2},
\end{align}
since by Lagrange's theorem there exists $\theta\in(0,\frac{1}{2})$ such that
\begin{align*}
(\pi x^{-1})^{\frac{1}{\alpha}}\Big((d+2)^{\frac{1}{\alpha}}-\Big(d+\frac{3}{2}\Big)^{\frac{1}{\alpha}}\Big)&\geq (\pi x^{-1})^{\frac{1}{\alpha}}\frac{1}{2\alpha}\Big(d+\frac{3}{2}+\theta\Big)^{\frac{1}{\alpha}-1}\\
\geq (\pi x^{-1})^{\frac{1}{\alpha}}\frac{1}{2\alpha}\Big(d+\frac{3}{2}+\theta\Big)^{\frac{1}{\alpha}-1}&\geq (\pi x^{-1})^{\frac{1}{\alpha}}\frac{1}{2\alpha}D^{\frac{1}{\alpha}-1}\geq 6>\frac{3}{2}
\end{align*}
by the first condition of \eqref{cond}. Hence, from \eqref{dop}, 
$$s\leq \Big(\pi x^{-1} (d+2)\Big)^{\frac{1}{\alpha}}-\Big(\pi x^{-1} (d+1)\Big)^{\frac{1}{\alpha}}-\frac{1}{2},$$
therefore,
\begin{align}\label{10}
\frac{s+\frac{1}{2}}{\big(\pi x^{-1} (d+1)\big)^{\frac{1}{\alpha}}}<\Big(\frac{d+2}{d+1}\Big)^{\frac{1}{\alpha}}-1\leq \frac{4}{3\alpha(d+1)},
\end{align}
since the function $t(y)=(1+y)^{\frac{1}{\alpha}}-1-4y/3\alpha$ vanishes for $y=0$ and $t'(y)=((1+y)^{\frac{1}{\alpha}-1}-4/3)/\alpha\leq 0$ for $y\leq 1/D$ due to the second condition of \eqref{cond}. Also, taking into account the first condition of \eqref{cond} and the fact that $d\geq D$,
\begin{align}\label{11}
\frac{3}{\big(2\pi x^{-1} (d+1)\big)^{\frac{1}{\alpha}}}<\frac{2}{(\pi x^{-1})^{\frac{1}{\alpha}}(d+1)^{\frac{1}{\alpha}}}\leq \frac{1}{6\alpha (d+1)}.
\end{align}
Thus, by \eqref{10} and \eqref{11}, the argument of any sine at the right hand side of \eqref{sumpair} is not less than
\begin{align}\label{argsin}
&\bigg(\Big(\pi x^{-1} (d+1)\Big)^{\frac{1}{\alpha}}-\frac{3}{2}+\max \Big(s+\frac{1}{2}\Big)\bigg)^{\alpha}\frac{x}{2}\nonumber\\
&\quad+\bigg(\Big(\pi x^{-1} (d+1)\Big)^{\frac{1}{\alpha}}-\frac{3}{2}-\max \Big(s+\frac{1}{2}\Big)\bigg)^{\alpha}\frac{x}{2}\nonumber\\
&\qquad\geq \Big(1-\frac{3}{2(\pi x^{-1}(d+1))^{1/\alpha}}+\frac{4}{3\alpha(d+1)}\Big)^{\alpha}\frac{\pi (d+1)}{2}\nonumber\\
&\qquad\qquad+\Big(1-\frac{3}{2(\pi x^{-1}(d+1))^{1/\alpha}}-\frac{4}{3\alpha(d+1)}\Big)^{\alpha}\frac{\pi (d+1)}{2}\nonumber\\
&\geq \bigg(\Big(1-\frac{1}{6\alpha(d+1)}+\frac{4}{3\alpha(d+1)}\Big)^{\alpha}+\Big(1-\frac{1}{6\alpha(d+1)}-\frac{4}{3\alpha(d+1)}\Big)^{\alpha}\bigg)\frac{\pi (d+1)}{2}\nonumber\\
&\qquad\qquad\qquad\qquad\qquad\geq\bigg(1+\Big(1-\frac{3}{2\alpha(d+1)}\Big)^{\alpha}\bigg)\frac{\pi (d+1)}{2}.
\end{align}
We will show that the latter expression is not less than $\pi d$. It is sufficient to prove that the function
$$g(y)=1+\Big(1-\frac{3}{2\alpha}y\Big)^{\alpha}-2+2y=\Big(1-\frac{3}{2\alpha}y\Big)^{\alpha}-1+2y$$
is not negative at the point $y=(d+1)^{-1}$. Note that $g(0)=0$ and
$$g'(y)=-\frac{3}{2}\Big(1-\frac{3}{2\alpha}y\Big)^{\alpha-1}+2\geq 0$$
for $y\leq 1/D$ by the third of the conditions \eqref{cond}. Thus, by \eqref{argsin} and the observation above it follows that the argument of any sine at the right hand side of \eqref{sumpair} is not less than $\pi d$. Besides, it is easy to see that any of these arguments is also not greater than $(\pi+1)d$, hence, all the sines at the right hand side of \eqref{sumpair} are nonpositive, and this implies nonpositivity of the whole sum of the chosen pairs. If there is a summand of $S_2(d)$ not belonging to any pair, we bound it above by zero. 

Let us estimate the number of the summands of $S_3(d)$ which could be left without a pair. If there exist such summands, then since we have exactly one pair consisting of summands of $S_2(d)$ and all other pairs consists of a summand of $S_2(d)$ and a summand of $S_3(d)$, therefore, the number of summands of $S_3(d)$ left without a pair is exactly
\begin{align}\label{bezpary}
&\left(\left\lfloor \Big(\pi x^{-1} \Big(d+\frac{3}{2}\Big)\Big)^{\frac{1}{\alpha}}\right\rfloor-\left\lfloor \Big(\pi x^{-1} (d+1)\Big)^{\frac{1}{\alpha}}\right\rfloor\right)\nonumber\\
&\qquad\qquad-\left(\left\lfloor \Big(\pi x^{-1} (d+1)\Big)^{\frac{1}{\alpha}}\right\rfloor-\left\lfloor \Big(\pi x^{-1} \Big(d+\frac{1}{2}\Big)\Big)^{\frac{1}{\alpha}}\right\rfloor-2\right)\nonumber\\
&\leq\Big(\pi x^{-1} \Big(d+\frac{3}{2}\Big)\Big)^{\frac{1}{\alpha}}-2\Big(\pi x^{-1} (d+1)\Big)^{\frac{1}{\alpha}}+ \Big(\pi x^{-1} \Big(d+\frac{1}{2}\Big)\Big)^{\frac{1}{\alpha}}+4\nonumber\\
&\qquad\qquad\qquad\qquad\qquad\leq \frac{2}{\alpha}\Big(\frac{1}{\alpha}-1\Big)d^{\frac{1}{\alpha}-2}(\pi x^{-1})^{\frac{1}{\alpha}}+4.
\end{align}
Here we used Lagrange's theorem for the function $w(y)=y^{\frac{1}{\alpha}}$:
\begin{align*}
w(y+1)-2w\Big(y+\frac{1}{2}\Big)+w(y)&=w'\Big(y+\frac{1}{2}+\theta_1\Big)-w'(y+\theta_2)\\
&=\Big(\frac{1}{2}+\theta_1-\theta_2\Big)w''(y+\theta_0),
\end{align*}
where $\theta_1,\;\theta_2\in[0,\frac{1}{2}],\;\theta_0\in[0,1]$. Therefore, 
\begin{align*}
&w(y+1)-2w\Big(y+\frac{1}{2}\Big)+w(y)\leq \sup_{[y+\frac{1}{2},y+\frac{3}{2}]}w''(z)\\
&\qquad=\frac{1}{\alpha}\Big(\frac{1}{\alpha}-1\Big)\max\bigg\{\Big(d+\frac{1}{2}\Big)^{\frac{1}{\alpha}-2},\Big(d+\frac{3}{2}\Big)^{\frac{1}{\alpha}-2}\bigg\}\leq \frac{2}{\alpha}\Big(\frac{1}{\alpha}-1\Big) d^{1/\alpha-2}
\end{align*}
according to the fourth condition of \eqref{cond}. Thus, estimate \eqref{bezpary} is valid.

From the argument above it follows that
\begin{align}\label{S2S3}
S_2(d)+S_3(d)\leq \frac{2}{\alpha}\Big(\frac{1}{\alpha}-1\Big)d^{\frac{1}{\alpha}-2}(\pi x^{-1})^{\frac{1}{\alpha}}+4.
\end{align}

Let us show now that the sum $S_1(d)$ is slightly different from $S_2(d)$, and $S_4(d)$ --- from $S_3(d)$. We will construct a one-to-one correspondence between
\begin{align}\label{snew}
s=1,2,...,\left\lfloor \Big(\pi x^{-1} (d+1)\Big)^{\frac{1}{\alpha}}\right\rfloor-\left\lfloor \Big(\pi x^{-1} \Big(d+\frac{1}{2}\Big)\Big)^{\frac{1}{\alpha}}\right\rfloor=:s_{max}
\end{align}
and some of
\begin{align}\label{k_s}
k_s=\left\lfloor \Big(\pi x^{-1} \Big(d+\frac{1}{2}\Big)\Big)^{\frac{1}{\alpha}}\right\rfloor-\lfloor (\pi x^{-1} d)^{\frac{1}{\alpha}}\rfloor+1,...,\left\lfloor \Big(\pi x^{-1} (d+1)\Big)^{1/\alpha}\right\rfloor-\lfloor (\pi x^{-1} d)^{\frac{1}{\alpha}}\rfloor,
\end{align}
so that
\begin{align}\label{k_s_cond}
k_s:=\min\Big\{k\in\mathbb{N}:\Big(\lfloor (\pi x^{-1} d)^{\frac{1}{\alpha}}\rfloor+k\Big)^{\alpha}\geq  \pi x^{-1}(2d+1)-\Big(\lfloor (\pi x^{-1} d)^{\frac{1}{\alpha}}\rfloor+s\Big)^{\alpha}\Big\}.
\end{align}
Then, using
$$\pi d\leq \Big(\lfloor (\pi x^{-1} d)^{\frac{1}{\alpha}}\rfloor+s\Big)^{\alpha}x\leq \pi\Big(d+\frac{1}{2}\Big)\leq \Big(\lfloor (\pi x^{-1} d)^{\frac{1}{\alpha}}\rfloor+k_s\Big)^{\alpha}x\leq \pi(d+1)$$
and
$$\pi\Big(d+\frac{1}{2}\Big)-\Big(\lfloor (\pi x^{-1} d)^{\frac{1}{\alpha}}\rfloor+s\Big)^{\alpha}x\leq \Big(\lfloor (\pi x^{-1} d)^{\frac{1}{\alpha}}\rfloor+k_s\Big)^{\alpha}x-\pi\Big(d+\frac{1}{2}\Big),$$
we get
\begin{align}\label{18}
\sin \Big(\lfloor (\pi x^{-1} d)^{\frac{1}{\alpha}}\rfloor+s\Big)^{\alpha}x\leq \sin\Big(\lfloor (\pi x^{-1} d)^{\frac{1}{\alpha}}\rfloor+k_s\Big)^{\alpha}x.
\end{align}
Note that
\begin{align}\label{zametim}
\Big(\pi x^{-1}(2d+1)-\Big((\pi x^{-1} d)^{\frac{1}{\alpha}}-1+s\Big)^{\alpha}\Big)^{\frac{1}{\alpha}}&-(\pi x^{-1} d)^{\frac{1}{\alpha}}+2\nonumber\\
&\leq \Big(\pi x^{-1} (d+1)\Big)^{\frac{1}{\alpha}}-1,
\end{align}
since
$$\Big(\pi x^{-1}(2d+1)-\pi x^{-1} d\Big)^{\frac{1}{\alpha}}-(\pi x^{-1} d)^{\frac{1}{\alpha}}+3-\Big(\pi x^{-1} (d+1)\Big)^{\frac{1}{\alpha}}=3- (\pi x^{-1} d)^{\frac{1}{\alpha}}\leq 0$$
due to $d\geq D\geq 3$. Therefore, it follows from \eqref{zametim} that
$$\Big(\pi x^{-1}(2d+1)-\Big(\lfloor (\pi x^{-1} d)^{\frac{1}{\alpha}}\rfloor+s\Big)^{\alpha}\Big)^{\frac{1}{\alpha}}-\lfloor (\pi x^{-1} d)^{\frac{1}{\alpha}}\rfloor+1\leq \left\lfloor \Big(\pi x^{-1} (d+1)\Big)^{\frac{1}{\alpha}}\right\rfloor,$$
hence, for any $s$ of \eqref{snew} there exists $k_s$ satisfying \eqref{k_s} and \eqref{k_s_cond}.

We will also show that $k_{s_1}\neq k_{s_2}$ for $s_1\neq s_2$. Since $k_s$ does not increase when $s$ increases, it suffices to show that $k_s> k_{s+1}$. Indeed, we can see from \eqref{k_s_cond} that
$$k_s\geq \Big(\pi x^{-1}(2d+1)-\Big(\lfloor (\pi x^{-1} d)^{\frac{1}{\alpha}}\rfloor+s\Big)^{\alpha}\Big)^{\frac{1}{\alpha}}-\lfloor (\pi x^{-1} d)^{\frac{1}{\alpha}}\rfloor>k_s-1,$$
so, it is sufficient to prove validity of
\begin{align}\label{sprav}
\Big(\pi x^{-1}(2d+1)-\Big(\lfloor (\pi x^{-1} d)^{\frac{1}{\alpha}}\rfloor+s\Big)^{\alpha}\Big)^{\frac{1}{\alpha}}&-\lfloor (\pi x^{-1} d)^{\frac{1}{\alpha}}\rfloor \nonumber\\
 > \Big(\pi x^{-1}(2d+1)-\Big(\lfloor (\pi x^{-1} d)^{\frac{1}{\alpha}}\rfloor+s+1\Big)^{\alpha}\Big)^{\frac{1}{\alpha}}&-\lfloor (\pi x^{-1} d)^{\frac{1}{\alpha}}\rfloor+1.
\end{align}
For the sake of brevity we denote $a:=\pi x^{-1}(2d+1),\;b:=\lfloor(\pi x^{-1} d)^{\frac{1}{\alpha}}\rfloor$ and consider the function
$$h_{a,b}(s)=\Big(a-(b+s)^{\alpha}\Big)^{\frac{1}{\alpha}}.$$
Then by Lagrange's theorem
$h_{a,b}(s)-h_{a,b}(s+1)=-h'_{a,b}(s_0),$
where $s_0\in(1,s_{max})$. Besides,
$$h'_{a,b}(s)=-\Big(a-(b+s)^{\alpha}\Big)^{\frac{1}{\alpha}-1}(b+s)^{\alpha-1},$$
i.e., $|h'_{a,b}|$ decreases in $b+s$, and hence, using that $b+s\leq \left(\pi x^{-1}\left(d+\frac{1}{2}\right)\right)^{\frac{1}{\alpha}}$ according to \eqref{snew}, we have on the interval $(1,s_{max})$
$$|h'_{a,b}(s)|> \Big(\pi x^{-1}\Big(d+\frac{1}{2}\Big)\Big)^{\frac{1}{\alpha}-1}\Big(\Big(\pi x^{-1}\Big(d+\frac{1}{2}\Big)\Big)^{\frac{1}{\alpha}}\Big)^{\alpha-1}=1.$$
Thus, $h_{a,b}(s)-h_{a,b}(s+1)>1$, which implies validity of \eqref{sprav}.

So, each $s$ satisfying \eqref{snew} corresponds injectively to $k_s$ satisfying \eqref{k_s} and \eqref{k_s_cond}, so that for each $s$ there holds \eqref{18}, i.e., any summand of $S_1(d)$ is bounded above by the corresponding summand of $S_2(d)$. The number of the summands of $S_2(d)$ which are not used in this estimate is
\begin{align*}
&\left\lfloor \Big(\pi x^{-1} (d+1)\Big)^{\frac{1}{\alpha}}\right\rfloor-\left\lfloor \Big(\pi x^{-1} \Big(d+\frac{1}{2}\Big)\Big)^{\frac{1}{\alpha}}\right\rfloor\\
&\quad\;\;\quad\quad\quad\quad\quad\quad\quad -\bigg(\left\lfloor \Big(\pi x^{-1} \Big(d+\frac{1}{2}\Big)\Big)^{\frac{1}{\alpha}}\right\rfloor-\lfloor (\pi x^{-1} d)^{\frac{1}{\alpha}}\rfloor\bigg)\\
&\leq \Big(\pi x^{-1} (d+1)\Big)^{\frac{1}{\alpha}}+(\pi x^{-1} d)^{\frac{1}{\alpha}}-2\Big(\pi x^{-1} \Big(d+\frac{1}{2}\Big)\Big)^{\frac{1}{\alpha}}+2\\
&\qquad\qquad\qquad\qquad\qquad\qquad\qquad\qquad\leq \frac{2}{\alpha}\Big(\frac{1}{\alpha}-1\Big)d^{\frac{1}{\alpha}-2}(\pi x^{-1})^{\frac{1}{\alpha}}+2,
\end{align*}
similarly as \eqref{bezpary}. Thus,
\begin{align}\label{to}
S_1(d)\leq S_2(d)+\frac{2}{\alpha}\Big(\frac{1}{\alpha}-1\Big)d^{\frac{1}{\alpha}-2}(\pi x^{-1})^{\frac{1}{\alpha}}+2.
\end{align}

Proceeding with the same argument for $S_3(d)$ and $S_4(d)$, we get for $d\geq D$
\begin{align}\label{S4}
S_4(d)\leq S_3(d)+\frac{2}{\alpha}\Big(\frac{1}{\alpha}-1\Big)&(d+1)^{\frac{1}{\alpha}-2}(\pi x^{-1})^{\frac{1}{\alpha}}+2\nonumber\\
&\leq S_3(d)+\frac{4}{\alpha}\Big(\frac{1}{\alpha}-1\Big)d^{\frac{1}{\alpha}-2}(\pi x^{-1})^{\frac{1}{\alpha}}+2
\end{align}
due to the fourth condition of \eqref{cond}.

Finally, summing up \eqref{cherezC1}, \eqref{cherezC4}, \eqref{S2S3}, \eqref{to} and \eqref{S4}, we derive
\begin{align}\label{gather}
&\sum_{k=l}^L c_k\sin k^{\alpha}x\leq 2C\sup_{k\geq l} c_k k+\sum_{\underset{d \;\text{is odd}}{d\geq d_1}}^{D-2}\sum_{k=\lfloor (\pi x^{-1} d)^{\frac{1}{\alpha}}\rfloor+1}^{\lfloor (\pi x^{-1} (d+2))^{\frac{1}{\alpha}}\rfloor} c_k\sin k^{\alpha}x\nonumber\\
&+\sum_{\underset{d \;\text{is odd}}{d\geq D}}^{d_2-2}\sum_{k=\lfloor (\pi x^{-1} d)^{\frac{1}{\alpha}}\rfloor+1}^{\lfloor (\pi x^{-1} (d+2))^{\frac{1}{\alpha}}\rfloor} c_k\sin k^{\alpha}x\leq 2C\sup_{k\geq l} c_k k\nonumber\\
&+c_{\lfloor (\pi x^{-1} d_1)^{\frac{1}{\alpha}}\rfloor+1}\Big(\lfloor (\pi x^{-1} D)^{\frac{1}{\alpha}}\rfloor-\lfloor (\pi x^{-1} d_1)^{\frac{1}{\alpha}}\rfloor\Big)+\sum_{\underset{d \;\text{is odd}}{d\geq D}}^{d_2-2}c_{\lfloor (\pi x^{-1} (d+1))^{\frac{1}{\alpha}}\rfloor} S(d)\nonumber\\
&\leq 2C\sup_{k\geq l} c_k k+2\bigg(\Big(\frac{D}{d_1}\Big)^{\frac{1}{\alpha}}-1\bigg)\sup_{k\geq l} c_k k\nonumber\\
&+2\sum_{\underset{d \;\text{is odd}}{d\geq D}}^{d_2-2}c_{\lfloor (\pi x^{-1} (d+1))^{\frac{1}{\alpha}}\rfloor} \bigg(S_2(d)+S_3(d)+\frac{3}{\alpha}\Big(\frac{1}{\alpha}-1\Big)d^{\frac{1}{\alpha}-2}(\pi x^{-1})^{\frac{1}{\alpha}}+2\bigg)\nonumber\\
&\leq 2C\sup_{k\geq l} c_k k+2(D^{\frac{1}{\alpha}}-1)\sup_{k\geq l} c_k k+2\sum_{\underset{d \;\text{is odd}}{d\geq D}}^{d_2-2} c_{\lfloor (\pi x^{-1} (d+1))^{\frac{1}{\alpha}}\rfloor}\nonumber\\
&\qquad\qquad\cdot\bigg(\frac{2}{\alpha}\Big(\frac{1}{\alpha}-1\Big)d^{\frac{1}{\alpha}-2}(\pi x^{-1})^{\frac{1}{\alpha}}+4+\frac{3}{\alpha}\Big(\frac{1}{\alpha}-1\Big)d^{\frac{1}{\alpha}-2}(\pi x^{-1})^{\frac{1}{\alpha}}+2\bigg)\nonumber\\
&\leq 2C\sup_{k\geq l} c_k k+2(D^{\frac{1}{\alpha}}-1)\sup_{k\geq l} c_k k\nonumber\\
&\qquad\qquad\qquad\quad+2\sup_{k\geq l} c_k k\sum_{d\geq d_1} \frac{5}{\alpha}\Big(\frac{1}{\alpha}-1\Big)d^{-2}+\frac{6}{(\pi x^{-1})^{\frac{1}{\alpha}}}d^{-\frac{1}{\alpha}}\nonumber\\
&\leq \left(2C+2(D^{\frac{1}{\alpha}}-1)+\frac{10}{\alpha}\Big(\frac{1}{\alpha}-1\Big)D^{-1}+\frac{6}{(\pi X^{-1})^{\frac{1}{\alpha}}}\Big(\frac{1}{\alpha}-1\Big) D^{1-\frac{1}{\alpha}}\right)\sup_{k\geq l} c_k k\nonumber\\
&\leq  \left(2C+2(D^{\frac{1}{\alpha}}-1)+\Big(\frac{10}{\alpha}+2X^2\Big)\Big(\frac{1}{\alpha}-1\Big)\right)\sup_{k\geq l} c_k k=:C_3\sup_{k\geq l} c_k k.
\end{align}

Similarly, by the same argument for $e_1,\;e_2$ and $E$ in place of $d_1,\;d_2$ and $D$ and bounding $S(e)$ below, using \eqref{cherezC2} and \eqref{cherezC3}, we get
\begin{align}\label{analog}
\sum_{k=l}^L c_k\sin k^{\alpha}x\geq -C_4\sup_{k\geq l} c_k k.
\end{align}

Gathering \eqref{case_1}, \eqref{case_2}, \eqref{gather} and \eqref{analog}, we have
$$\bigg|\sum_{k=l}^L c_k\sin k^{\alpha}x\bigg|\leq \max\{C_1,\;C_2,\;C_3,\;C_4\}\sup_{k\geq l} c_k k,$$
which completes the proof of uniform convergence.
\end{proof}

\section{Proof of Theorem 2}

\begin{proof}[Proof of Theorem \ref{thm2}] The part (a) of Theorem \ref{thm2} follows clearly from Theorem \ref{thm} (a).

In view of Theorem \ref{thm} (b), (c), it suffices for the proof of the corresponding parts of Theorem \ref{thm2} to show that for any $\alpha>0$ the condition $c_kk\to 0$ is necessary and sufficient for uniform convergence of series \eqref{series} on a set containing for some $\gamma\geq 2$ a discrete $(\alpha,\gamma)$-neighbourhood of zero. Suppose that series \eqref{series} converges uniformly on some set $X$ containing a discrete $(\alpha,\gamma)$-neighbourhood of zero and let $2\leq\gamma$ and $N$ be the numbers from the definition of such a neighbourhood. Take an arbitrary $\varepsilon>0$. Then there exists $l_0=l_0(\varepsilon)\in\mathbb{N},\;l_0\geq N,$ such that for any $L>l\geq l_0$ and any $x\in X$ there holds $\Big|\sum_{k=l}^{L}c_k\sin k^{\alpha} x\Big|<\varepsilon.$ So, taking any $l\ge l_0$ and putting $x_0=\frac{\pi}{{\gamma}^{\alpha+1}l^{\alpha}}$ (either $x_0$ or $-x_0$ contains in $X$), we obtain
$$\varepsilon>\bigg|\sum_{k=l+1}^{2l}c_k\sin k^{\alpha} x_0\bigg|=\bigg|\sum_{k=l+1}^{2l}c_k\sin k^{\alpha} \frac{\pi}{\gamma^{\alpha+1}l^{\alpha}}\bigg|.$$
Note that the argument of any sine here does not exceed $\frac{\pi}{2}$, hence,
\begin{align}\label{021}
\varepsilon>\frac{2}{\pi}\sum_{k=l+1}^{2l}c_kk^{\alpha} \frac{\pi}{{\gamma}^{\alpha+1}l^{\alpha}}\geq 2{\gamma}^{-\alpha-1}\sum_{k=l+1}^{2l} c_k\geq 2{\gamma}^{-\alpha-1}lc_{2l}={\gamma}^{-\alpha-1}c_{2l}2l,
\end{align}
i.e., $c_{2l}2l\leq {\gamma}^{\alpha+1}\varepsilon$. Besides,
\begin{align}\label{022}
c_{2l+1}(2l+1)\leq c_{2l}4l\leq 2{\gamma}^{\alpha+1}\varepsilon,
\end{align}
which assures necessity of the condition.  
\end{proof}

\begin{proof}[Proof of Remark \ref{rbvs_rem}] Estimates of the proof of Theorem \ref{thm} (a), (b) remain true up to constants if we replace the differences $c_m-c_{m+1}$ by their absolute values. Indeed, it follows from the relations
\begin{align}\label{01}
\sum_{k=l}^L|c_k-c_{k+1}|k^{\xi}&=l^{\xi}\sum_{k=l}^L|c_k-c_{k+1}|+\sum_{k=l}^L \big((k+1)^{\xi}-k^{\xi}\big)\sum_{j=l}^L|c_k-c_{k+1}|\nonumber\\
&\leq V c_l l^{\xi}+VC(\xi)\sum_{k=l}^Lc_k k^{\xi-1},
\end{align}
where $\xi>0,\;V$ is from \eqref{rbvs}, and
\begin{align}\label{02}
c_k\leq c_m+\sum_{l=m}^{k-1}|c_l-c_{l+1}|\leq (V+1)c_m
\end{align}
for $k>m$. Inequality \eqref{01} implies validity of \eqref{udobn}, \eqref{taksebe}, \eqref{sum_1_1}, \eqref{sum_1_2}, \eqref{sum_3_2}, \eqref{sum_3_3} and \eqref{sum_2} with appropriate modifications, while the inequality \eqref{02} --- validity \eqref{00}, \eqref{0end}, \eqref{021} and \eqref{022}.
\end{proof}

{\bf Aknowledgements.} The author is deeply grateful to M. Dyachenko and S. Tikhonov for constant discussion of the results and constructive advice on the work presentation and also to referees for useful remarks contributed to the quality of the paper.

The research was carried out with the support of the Foundation for the Advancement of Theoretical Physics and Mathematics $``$BASIS$"$ 19-8-2-28-1.



\begin{thebibliography}{17}

\bibitem[1]{CJ}
T.\,W.~Chaundy, A.\,E.~Jolliffe, {\it The uniform convergence of a certain class of trigonometric series}, Proc. London Math. Soc., 15   (1916), 214--216.

\bibitem[2]{N}
J.\,R.~Nurcombe, {\it On the uniform convergence of sine series with quasimonotone coefficients}, J. Math. Anal. Appl., 166(2)   (1992), 577--581.

\bibitem[3]{S}
S.\,B.~Stechkin, {\it Trigonometric series with monotone type coefficients}, Proc. Steklov Inst. Math. Suppl., 1   (2001), 214--224.

\bibitem[4]{L}
L.~Leindler, {\it On the uniform convergence and boundedness of a certain class of sine series}, Anal. Math., 27(4)   (2001), 279--285.

\bibitem[5]{T}
S.~Tikhonov, {\it Trigonometric series with general monotone coefficients}, J. Math. Anal. Appl., 326   (2007), 721--731.

\bibitem[6]{T2}
S.~Tikhonov, {\it Best approximation and moduli of smoothness: computation and equivalence theorems}, J. Approx. theory, 153   (2008), 19--39.

\bibitem[7]{DMT}
M.~Dyachenko, A.~Mukanov, S.~Tikhonov, {\it Uniform convergence of trigonometric series with general monotone coefficients}, Canad. J. Math., 71(6)   (2019), 1445--1463.

\bibitem[8]{K}
S.~K\c{e}ska, {\it On the uniform convergence of sine series with square root}, J. Func. Sp., (2009), 1--11.

\bibitem[9]{Os}
K.\,I.~Oskolkov, {\it Vinogradov's series and integrals and their applications}, Proc. Steklov Inst. Math., 190   (1992), 193--229. 

\bibitem[10]{korobov}
N.\,M.~Korobov, {\it Exponential sums and their applications}, Kluwer Academic Publishers, Dordrecht, Boston, 1992.

\bibitem[11]{V}
I.\,M.~Vinogradov, {\it D\'emonstration analytique d'un th\'eor\'eme sur la distribution des parties fractionnaires d'un polyn\^ome entier}, Bulletin de l'Acad\'emie des Sciences de l'URSS, 21(4)   (1927), 567--578.

\bibitem[12]{P}
L.\,D.~Pustyl'nikov, {\it Distribution of the fractional parts of a polynomial, Weyl sums, and ergodic theory}, Uspekhi Mat. Nauk, 48(4)   (1993), 131--166.

\bibitem[13]{CHD}
S.~Chowla, H.~Davenport, {\it On Weyl's inequality and Waring's problem for cubes}, Acta Arithm., 6   (1961), 505--521.

\bibitem[14]{HB}
D.\,R.~Heath-Brown, {\it Bounds for the cubic Weyl sum}, J. Math. Sci., 171(6)   (2010), 813--823.

\bibitem[15] {W}
T.\,D.~Wooley, {\it Mean value estimates for odd cubic Weyl sums}, Bull. London Math. Soc., 47(6)   (2015), 946--957.


\bibitem[16]{O} 
K.\,I.~Oskolkov, {\it Spectra of uniform convergence}, Dokl. Akad. Nauk SSSR, 1(288)   (1986), 54--58.

\bibitem[17]{AO}
G.\,I.~Arkhipov, K.\,I.~Oskolkov, {\it On a special trigonometric series and its applications}, Math. USSR-Sb., 1(62) (2(10))   (1989), 145--155.
\end{thebibliography}
\end{document}